\newtheorem{thm}[equation]{Theorem}
\newtheorem{cor}[equation]{Corollary}
\newtheorem{lem}[equation]{Lemma}
\newtheorem{prop}[equation]{Proposition}
\theoremstyle{definition}
\newtheorem{exam}[equation]{Example}
\numberwithin{equation}{section}
\newcommand{\depth}{\mathsf{d}}
\newcommand{\rootdeg}{\mathsf{rdeg}}
\newcommand{\comps}{\mathsf{compsize}}
\definecolor{mjo}{rgb}{0,0,.9}
\newcommand{\cc}{\mathbb{C}}
\newcommand{\z}{\mathbb{Z}}
\newcommand{\F}{\mathbb{F}}
\newcommand{\ad}{\mathsf{ad}}
\newcommand{\n}{\mathfrak{n}}
\newcommand{\h}{\mathfrak{h}}
\newcommand{\g}{\mathfrak{g}}
\newcommand{\LL}{\mathfrak{L}}
\newcommand{\T}{\mathbb{T}}
\newcommand{\der}{\mathsf{Der}}
\newcommand{\inn}{\mathsf{Inn}}
\newcommand{\aut}{{\rm Aut}}
\newcommand{\proj}{\mathsf{Proj}}
\newcommand{\rt}{\mathsf{rt}}
\definecolor{mjo}{rgb}{.4,0,.9}
\newcommand{\Hom}{\mbox{Hom}}
\begin{document}

\title[Insertion-elimination algebra: automorphisms and derivations]
{Automorphisms and derivations of the insertion-elimination algebra and related graded Lie algebras}

\author[Matthew Ondrus and Emilie Wiesner]{Matthew Ondrus and Emilie Wiesner}

\address{\noindent Mathematics Department,
Weber State University,
Ogden, UT  84408 USA,  \emph{E-mail address}: \tt{mattondrus@weber.edu}}

\address{\noindent Department of Mathematics, Ithaca College, Williams Hall Ithaca, NY 14850, USA, \ \  \emph{E-mail address}: \tt{ewiesner@ithaca.edu}}

\thanks{ \textbf{MSC Numbers (2010)}: Primary:17B65, 17B40;  Secondary: 17B70, 17B68\hfill \newline
\textbf{Keywords}:  insertion-elimination algebra, generalized Virasoro algebras, triangular decomposition, self-centralizing elements}

\date{}

\maketitle

\begin{abstract}
This paper addresses several structural aspects of the insertion-elimination algebra $\g$, a Lie algebra that can be realized in terms of tree-inserting and tree-eliminating operations on the set of rooted trees.  In particular, we determine the finite-dimensional subalgebras of $\g$, the automorphism group of $\g$, the derivation group of $\g$, and a generating set.  Many parts of the results are stated for a more general class of Lie algebras and reproduce results for the generalized Virasoro algebras.
\end{abstract}

\section{Introduction}

This paper focuses largely on the study of the insertion-elimination algebra $\g$, a Lie algebra that can be naturally realized in terms of tree-inserting and tree-eliminating operations on the set of rooted trees. Our results include a characterization of the automorphisms, derivations, and finite-dimensional subalgebras for $\g$.

The notion of an insertion-elimination algebra was introduced by Connes and Kreimer \cite{CK} as a way of describing the combinatorics of inserting and collapsing subgraphs of Feynman graphs.  They investigated Hopf algebras related to rooted trees; the insertion-elimination algebra arises from the dual algebra of one of these Hopf algebras.  Further results on the Hopf algebra perspective have been obtained by Hoffman \cite{Hoff} and Foissy \cite{Fos}.  Sczesny \cite{S} focused on the insertion-elimination Lie algebra $\g$ under consideration in this paper, proving that $\g$ is simple as a Lie algebra and giving some fundamental results about representations for $\g$.  (We also note the papers \cite{KM04} and \cite{KM05} by Mencattini and Kreimer, investigating the ladder insertion-elimination Lie algebra.  That Lie algebra can also be characterized in terms of operations on trees; however the specific relations and resulting structure of the Lie algebra are quite different from the insertion-elimination algebra $\g$ studied here.)

Many of the results of this paper rely on properties that the insertion-elimination algebra shares with the generalized Virasoro algebras $V(M)$ introduced in \cite{PZ}, where $M$ is an additive sugroup of the underlying field $\F$.  In order to capture these similarities, we introduce (in Section \ref{subsec:genGradedLie}) the idea of a \emph{weakly triangular} decomposition for a Lie algebra $\LL= \LL^- \oplus \h \oplus \LL^+$ and the idea of a \emph{completely self-centralizing} subalgebra.

Under suitable restrictions on the underlying group $M$, it is easy to establish that the generalized Virasoro algebras $V(M)$ have a weakly triangular decomposition $V(M) = V(M)^- \oplus \h \oplus V(M)^+$.  Moreover, the weight spaces of $V(M)$ are one-dimensional, and this can be used in conjunction with Lemma \ref{lem:genNonzeroBrack-n+} to argue that the subalgebras $V(M)^{\pm}$ are completely self-centralizing.  The weight spaces for the insertion-elimination algebra $\g$ fail to have finite growth in the sense of \cite{Math92}, and the proofs of the analogous results rely on computations involving the combinatorics of rooted trees.  Proposition \ref{prop:antiAutoSymm} establishes a weakly triangular decomposition for $\g$, and Proposition \ref{prop:nonzeroBrack-n+} shows that the subalgebras $\g^{\pm}$ are completely self-centralizing.  

Using the framework outlined above, we obtain new results on derivations, automorphisms, and finite-dimensional subalgebras for the insertion-elimination algebra (as well as many of its subalgebras) and recapture results from \cite{DZ} and \cite{SZ} for generalized Virasoro algebras.   Many of our results are stated and proved in the generality of a Lie algebra $\LL$ with a regular weakly triangular decomposition $\LL^- \oplus \h \oplus \LL^+$, where the subalgebras $\LL^{\pm}$ are completely self-centralizing.  In Proposition \ref{prop:finSubsGeneral}, we show that a finite-dimensional subalgebra of $\LL$ has dimension at most $\dim \h + 2$.  In Proposition \ref{prop:autPreserveCartan} we prove  that for an $\F$-linear automorphism $\tau$ of $\LL$, $\tau (\h ) \subseteq \h$. (This fails to hold in general if $\LL^+$ is not completely self-centralizing). Proposition \ref{prop:DerDirectSum} asserts that every $\F$-linear derivation of $\LL$ is the sum of an inner derivation and a derivation of degree 0.

Using our general results for the class of Lie algebras described in Section \ref{subsec:genGradedLie}, we deduce various results specific to the insertion-elimination algebra $\g$. The finite-dimensional subalgebras of $\g$ are described in Example \ref{exam:insElFinSub}.  Theorem \ref{thm:AutInsElAlg} describes the automorphism group $\aut_\cc (\g)$ as a semidirect product, and the center of $\aut_\cc (\g)$ is characterized in Corollary \ref{cor:centerAutIE}.  In Corollary \ref{cor:DerModInIE}, we show that every derivation of $\g$ is inner, and thus the first cohomology of $\g$ with coefficients in $\g$ is trivial.   In Proposition \ref{prop:IE-NotFinGen}, we show that the Lie algebra $\g$ is not finitely generated, so that, for example, the results of \cite{Farn} cannot be used to study the derivations of $\g$.

\section{Notation and definitions}

In this section, we introduce the notions of a weakly triangular decomposition and a completely self-centralizing algebra; these definitions provide the general framework for a variety of results.  We then define the insertion-elimination algebra (along with a collection of notation for working with this algebra) and review the definition of the generalized Virasoro algebras.

\subsection{Weakly triangular decompositions and completely \\self-centralizing algebras}\label{subsec:genGradedLie}
The following definition is similar to the triangular decomposition defined in \cite{MP95} and provides a general setting for later results in the paper.  Let $\LL$ be a Lie algebra over a field $\F$.  We say that $\LL$ admits a \emph{weakly triangular} decomposition if  

\begin{enumerate}
\item $\LL = \LL^- \oplus \h \oplus \LL^+$ , for some subalgebras $\LL^{\pm}$ and $\h$, where $\h$ is abelian.

\item $\LL^+ \neq 0$, \ $[ \h, \LL^+ ] \subseteq \LL^+$, and $\LL^+$ admits a weight space decomposition relative to $\h$ (under the adjoint representation) with weights $\alpha \neq 0$ lying in a free additive semigroup $G_+ \subseteq \h^*$.  

\item There exists an anti-involution $\sigma$ on $\LL$ such that $\sigma ( \LL^+) = \LL^-$ and $\sigma |_\h = {\rm id}_\h$. 

\item There is a total ordering $>$ on $G_+$ such that for all $\alpha, \beta, \gamma \in G_+$
\begin{enumerate}
\item[(a)] $\alpha + \beta > \alpha$;  and 

\item[(b)] if $\beta \ge \gamma$, then $\alpha + \beta \ge \alpha + \gamma$.
\end{enumerate}
\end{enumerate}
Part (a) of condition (4) implies that $0 \not\in G_+$. 

For $\alpha \in \h^*$, we denote $\LL_\alpha = \{ x \in \LL \mid \mbox{$[h, x] = \alpha (h) x$ for $h \in \h$} \}$ and note that $\LL_0 = \h$.  (It may be that $\LL_\alpha = 0$ for some $\alpha \in G_+$.)   Let $G_- = \{ - \alpha \mid \alpha \in G_+ \}$ and $G = G_- \cup \{ 0 \} \cup G_+$, so that 
$$\LL^+ = \bigoplus_{\alpha \in G_+} \LL_\alpha,  \qquad \LL^- = \bigoplus_{\alpha \in G_-} \LL_\alpha, \qquad \mbox{and} \qquad \LL = \bigoplus_{\alpha \in G} \LL_\alpha.$$
It is possible that $\alpha + \beta \not\in G$ for some $\alpha \in G_-$ and $\beta \in G_+$.  However it follows from the decomposition $\LL = \LL^- \oplus \h \oplus \LL^+$ that if $[ \LL_\alpha , \LL_\beta ] \neq 0$, then $\alpha + \beta \in G$.  It is straightforward to extend the order $<$ on $G_+$ to a total order on $G$ by defining that $\alpha > \beta$ if and only if $- \alpha < - \beta$ in $G_+$, and $\alpha < 0 < \gamma$ whenever $\alpha, \beta \in G_-$ and $\gamma \in G_+$.  With this convention, then we may regard $\LL^+ = \bigoplus_{0 < \alpha \in G} \LL_\alpha$ and $\LL^- = \bigoplus_{0 > \alpha \in G} \LL_\alpha$.

If, in addition to conditions (1) through (4) holding, the weight spaces $\LL_\alpha = \{ x \in \LL \mid \mbox{$[h, x] = \alpha (h) x$ for $h \in \h$} \}$ are all finite-dimensional, then we say that $\LL$ admits a \emph{regular} weakly triangular decomposition.  

Note that (1)--(3) above are the same as (TD1)-(TD3) of \cite{MP95}, but (TD4)--stated below--has been replaced by an ordering condition on $G_+$ and $G_-$.   
\begin{enumerate}
\item[(TD4)]  There exists a basis $\{ \alpha_j \}_{j \in J}$ of $G_+$ consisting of linearly independent elements of $\h^*$.  In particular, $G_+$ consists of all nonzero finite sums of the form $\sum_{j \in J} m_j \alpha_j$ with $m_j \in \z_{\ge 0}$.  
\end{enumerate}

If the index set $J$ of (TD4) is countable, then we may use a lexicographic ordering to define a total ordering on the set $G_+$ that satisfies (4) above.  Thus we have the following proposition. 

\begin{prop}
Suppose a Lie algebra $L$ admits a triangular decomposition in the sense of \cite{MP95}.  If the set $J$ in (TD4) is countable, then $L$ admits a weakly triangular decomposition.  
\end{prop}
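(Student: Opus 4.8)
The plan is to observe that conditions (1)--(3) of a weakly triangular decomposition are verbatim (TD1)--(TD3), so these hold for $L$ without any further work, and that (TD4) exhibits $G_+$ as the free additive semigroup on the linearly independent set $\{\alpha_j\}_{j \in J}$, which is precisely the semigroup structure required in condition (2). Thus the entire content of the proposition reduces to replacing (TD4) by (4): I must produce a total order on $G_+$ satisfying (4)(a) and (4)(b).

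First I would use countability to fix an enumeration $J = \{j_1, j_2, \dots\}$, which endows $J$ with a linear order. By (TD4) and the linear independence of the $\alpha_j$, every $\alpha \in G_+$ has a unique expansion $\alpha = \sum_j m_j \alpha_j$ with $m_j \in \z_{\ge 0}$ of finite support, so I may identify $\alpha$ with its coefficient sequence $(m_j)_{j \in J}$. I then define $>$ on $G_+$ to be the lexicographic order relative to this enumeration: for distinct $\alpha, \beta$ with coefficients $(m_j)$ and $(n_j)$, declare $\alpha > \beta$ when $m_k > n_k$ at the least index $k$ for which $m_k \neq n_k$. Because the supports are finite, the index set on which two distinct sequences disagree is finite and nonempty, so this least index exists and $>$ is a well-defined total order.

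It then remains to check (4)(a) and (4)(b). For (4)(a), given $\beta \in G_+$, some coefficient of $\beta$ is positive; the coefficient sequences of $\alpha + \beta$ and $\alpha$ disagree exactly on the support of $\beta$, and at the least such index the coefficient of $\alpha + \beta$ strictly exceeds that of $\alpha$, so $\alpha + \beta > \alpha$. For (4)(b), the key point is translation invariance: since the same $\alpha$ is added to both $\beta$ and $\gamma$, the set of indices where $\alpha + \beta$ and $\alpha + \gamma$ disagree coincides with the set where $\beta$ and $\gamma$ disagree, and at the least such index the two differences carry the same sign; hence $\beta \ge \gamma$ forces $\alpha + \beta \ge \alpha + \gamma$.

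I expect no serious obstacle: the argument is elementary once $G_+$ is identified with finitely supported sequences. The only points needing genuine care are confirming that the lexicographic order is total and well-defined — which rests on the finiteness of supports, making the ``first index of disagreement'' meaningful even when $J$ is infinite — and noting that it is exactly the countability of $J$ (equivalently, the availability of a concrete linear order on $J$) that licenses the lexicographic construction.
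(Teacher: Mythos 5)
Your proof is correct and takes essentially the same route as the paper: the paper's entire justification is the remark preceding the proposition that, since $J$ is countable, a lexicographic ordering on $G_+$ yields a total order satisfying condition (4), with (1)--(3) being verbatim (TD1)--(TD3). Your write-up simply fills in the routine verifications (well-definedness via finite supports, and properties (4)(a), (4)(b)) that the paper leaves implicit.
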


As we observe in Section \ref{subsec:genVir}, the generalized Virasoro algebras provide examples of Lie algebras that possess a weakly triangular decomposition but may lack a triangular decomposition.

\medskip

Let $\LL$ be a Lie algebra over a field $\F$. An element $0 \neq x \in \LL$ is {\it self-centralizing} in $\LL$ if, for $y \in \LL$, $[x,y]=0$ implies that $y \in \F x$.  We say that a Lie algebra $\LL$ such that $\dim \LL>1$ is {\it completely self-centralizing} if every nonzero element of $\LL$ is self-centralizing in $\LL$.

It is shown in \cite{BIO} that the structure of a finite-dimensional Lie algebra is greatly constrained if it contains an ad-nilpotent self-centralizing element.  The insertion-elimination algebra and the generalized Virasoro algebras contain subalgebras that are completely self-centralizing ($\LL^{\pm}$) but are infinite-dimensional and do not generally contain ad-nilpotent elements.  We show, however, that the completely self-centralizing property of $\LL^{\pm}$ strongly constrains the derivations, automorphisms, and finite-dimensional subalgebras of $\LL$.

If $\LL$ has a weakly triangular decomposition $\LL = \LL^- \oplus \h \oplus \LL^+$, the anti-involution $\sigma: \LL \to \LL$ restricts to an anti-isomorphism $\sigma : \LL^+ \to \LL^-$.  Thus we have the following.

\begin{prop}\label{prop:posIffNegative}
Let $\LL = \LL^- \oplus \h \oplus \LL^+$ be a Lie algebra with a weakly triangular decomposition.  Then $\LL^+$ is completely self-centralizing if and only if $\LL^-$ is completely self-centralizing.
\end{prop}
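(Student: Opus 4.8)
The plan is to use the anti-involution $\sigma$ to transport the self-centralizing property between $\LL^+$ and $\LL^-$. Since $\sigma$ is an anti-involution on $\LL$ with $\sigma(\LL^+) = \LL^-$, and $\sigma|_\h = \mathrm{id}_\h$, it restricts to a bijective $\F$-linear anti-homomorphism $\sigma \colon \LL^+ \to \LL^-$; that is, $\sigma([x,y]) = [\sigma(y), \sigma(x)]$ for all $x,y \in \LL^+$. Because $\sigma$ is an involution, it is its own inverse, so $\sigma \colon \LL^- \to \LL^+$ likewise serves as an inverse anti-isomorphism. By symmetry it suffices to prove one direction: assuming $\LL^+$ is completely self-centralizing, I would show $\LL^-$ is as well.

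First I would note that since $\sigma$ is a linear bijection, $\dim \LL^- = \dim \LL^+ > 1$, so the dimension hypothesis in the definition of completely self-centralizing is satisfied for $\LL^-$. Then I would take an arbitrary nonzero $u \in \LL^-$ and a $v \in \LL^-$ with $[u,v] = 0$, and aim to conclude $v \in \F u$. The key step is to apply $\sigma$: set $x = \sigma(u)$ and $y = \sigma(v)$, both lying in $\LL^+$ since $\sigma(\LL^-) = \LL^+$. The anti-homomorphism property gives
\[
0 = \sigma(0) = \sigma([u,v]) = [\sigma(v), \sigma(u)] = [y,x] = -[x,y],
\]
so $[x,y] = 0$ in $\LL^+$. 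Since $u \neq 0$ and $\sigma$ is injective, $x = \sigma(u) \neq 0$, so $x$ is a nonzero element of $\LL^+$; by the hypothesis that $\LL^+$ is completely self-centralizing, $x$ is self-centralizing, whence $y \in \F x$, say $y = c\,x$ for some $c \in \F$.

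Finally I would pull this back through $\sigma$. Applying $\sigma$ to $y = c\,x$ and using linearity together with $\sigma^2 = \mathrm{id}$ gives $v = \sigma(y) = c\,\sigma(x) = c\,u$, so $v \in \F u$, as required. This shows every nonzero element of $\LL^-$ is self-centralizing, i.e.\ $\LL^-$ is completely self-centralizing. The reverse implication follows by the identical argument with the roles of $\LL^+$ and $\LL^-$ exchanged, using that $\sigma$ is its own inverse. I do not anticipate a serious obstacle here; the only point requiring a little care is making sure that ``self-centralizing'' is tested only against elements of the \emph{same} subalgebra $\LL^{\pm}$ (not all of $\LL$), which is exactly what the definition of a completely self-centralizing \emph{subalgebra} demands, so the transport under $\sigma$ between $\LL^+$ and $\LL^-$ stays internal to each piece and the argument closes cleanly.
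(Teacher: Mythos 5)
Your proof is correct and takes essentially the same approach as the paper, which simply observes that the anti-involution $\sigma$ restricts to an anti-isomorphism $\LL^+ \to \LL^-$ and leaves the transport argument implicit. Your write-up fills in exactly those details (including the dimension condition and the check that the centralizer condition stays internal to $\LL^{\pm}$), so there is nothing to correct.
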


\subsection{The insertion-elimination algebra} \label{sec:IEnotation}

The insertion-elimination algebra is defined in terms of operations on rooted trees. We regard a rooted tree as an undirected, cycle-free graph with a distinguished vertex or {\it root}, denoted $\rt (t)$.   Let $\T$ denote the set of all (isomorphism classes of) rooted trees.  For a rooted tree $t$, $V(t)$ denotes the set of vertices of $t$, $E(t)$ denotes the set of edges of $t$, and $|t|$ denotes the cardinality of $V(t)$.  For example, if $t$ is the rooted tree \ $\psset{levelsep=0.3cm, treesep=0.3cm} 
\pstree{\Tr{$\bullet$}}{\Tr{$\bullet$} \pstree{\Tr{$\bullet$}}{\Tr{$\bullet$} \Tr{$\bullet$} \Tr{$\bullet$}}}$ (with the root displayed at the top of the picture), then $|t| = 6$ and $|E(t)| = 5$.  For $n \in \z_{>0}$, define $\T_n= \{ t \in \T \mid |t|=n\}$.

It is possible to decompose and combine rooted trees to form new rooted trees in natural, but non-unique, ways; we address a variety of ideas and notation to capture this.  If $t \in \T$, then a (rooted) {\it subtree} of $t$ is a tree $r$ such that $r$ is a connected subgraph of $t$, and $r$ is regarded as a rooted tree by declaring $\rt (r)$ to be the unique vertex of $r$ having minimal distance (in $t$) from $\rt (t)$.  In particular, if $\rt(t)$ is contained in $r$, then $\rt(r)= \rt(t)$.   We write $r \subseteq t$ to denote that $r$ is a (rooted) subtree of $t$ and regard $V(r) \subseteq V(t), E(r) \subseteq E(t)$ in the natural way.

In this paper, we will use several characteristics of a rooted tree $t \in \T$.  The \emph{depth} of $t$, denoted by $\depth (t)$, is the number of edges in the longest simple path in $t$ that begins at $\rt (t)$; and the {\it root degree} of $t$, denoted $\rootdeg(t)$, is the vertex degree of $\rt(t)$.  For $t \in \T$, the {\it components} of $t$ are the maximal subtrees $t_1, \ldots, t_k$ of $t$ such that $\rt(t_i)$ and $\rt(t)$ are connected by an edge in $t$.  (Note that $k = \rootdeg (t)$.)   We let $\comps (t) = \max \{ |r| \, \mid \mbox{$r$ is a component of $t$} \}$, i.e. the maximal size of a component of $t$.  For example, the tree $\psset{levelsep=0.3cm, treesep=0.3cm} \pstree{\Tr{$\bullet$}}{\pstree{\Tr{$\bullet$}}{\Tr{$\bullet$}} \Tr{$\bullet$} \pstree{\Tr{$\bullet$}}{\Tr{$\bullet$} \Tr{$\bullet$} \Tr{$\bullet$}}}$ contains 3 components, and $\comps (t) = 4$ in this case.

If $e \in E(t)$, then removing the edge $e$ naturally divides $t$ into two (maximal) rooted subtrees; we let $R_e(t)$ denote the subtree containing the root of $t$ and $P_e(t)$ the other subtree of $t$.  
For $s, t \in \T$ and $v \in V(s)$, we let $s \cup_v t$ denote the rooted tree obtained by joining the root of $t$ to $s$ at the vertex $v$ via a single edge and declaring that $\rt ( s \cup_v t) = \rt (s)$.  For $t_1, t_2, t_3 \in \T$, we define the following statistics: 
\begin{align*}
\alpha (t_1, t_2, t_3) &= | \{ e \in E(t_2) \mid  R_e (t_2) = t_3, \ P_e(t_2) = t_1 \} |  \\
\beta (t_1, t_2, t_3) &= | \{ v \in V(t_3) \mid t_1 = t_3 \cup_v t_2 \} |.
\end{align*}
The {\it insertion-elimination algebra} $\g$ is the Lie algebra over $\cc$ with basis $\{ d \} \cup \{ D_t^{\pm} \mid t \in \T \}$ and relations 
\begin{align*}
[D_s^+, D_t^+] &= \sum_{r \in \T} \left( \beta (r, s,t) - \beta (r,t,s)  \right) D_r^+  \\
[D_s^-, D_t^-] &= \sum_{r \in \T} \left( \alpha (t, r, s) - \alpha (s,t, r) \right) D_r^- \\
[D_s^-, D_t^+] &= \sum_{r \in \T} \alpha (s,t,r) D_r^+  + \sum_{r \in \T} \beta (s,t,r) D_r^-  \\
[D_t^-, D_t^+] &= d \\
[d, D_t^-] &= - |t| D_t^- \\
[d, D_t^+] &= |t| D_t^+,
\end{align*}
where $s, t \in \T$.  For example, $\psset{levelsep=0.3cm, treesep=0.3cm} [ D_{\pstree{\Tr{$\bullet$}}{}}^-, D_{\pstree{\Tr{$\bullet$}}{ \Tr{$\bullet$} \Tr{$\bullet$}}}^+ ] = 2 D_{\pstree{\Tr{$\bullet$}}{ \Tr{$\bullet$}}}^+$, \quad 
$\psset{levelsep=0.3cm, treesep=0.3cm} [ D_{\pstree{\Tr{$\bullet$}}{ \Tr{$\bullet$} \Tr{$\bullet$}}}^-, D_{\pstree{\Tr{$\bullet$}}{}}^+ ] = D_{\pstree{\Tr{$\bullet$}}{ \Tr{$\bullet$}}}^-$ 
and
$$\psset{levelsep=0.3cm, treesep=0.3cm}
[ D_{\pstree{\Tr{$\bullet$}}{}}^+, D_{\pstree{\Tr{$\bullet$}}{ \Tr{$\bullet$} \Tr{$\bullet$}}}^+ ] 
= D_{\pstree{\Tr{$\bullet$}}{ \Tr{$\bullet$} \Tr{$\bullet$} \Tr{$\bullet$}}}^+ \quad + \quad 2 \, D_{\pstree{\Tr{$\bullet$}}{ \Tr{$\bullet$} \pstree{\Tr{$\bullet$}}{\Tr{$\bullet$}}}}^+ \quad - \quad D_{\pstree{\Tr{$\bullet$}}{\pstree{\Tr{$\bullet$}}{ \Tr{$\bullet$} \Tr{$\bullet$}}}}^+
$$
$$\psset{levelsep=0.3cm, treesep=0.3cm}
[ D_{\pstree{\Tr{$\bullet$}}{}}^-, D_{\pstree{\Tr{$\bullet$}}{ \Tr{$\bullet$} \Tr{$\bullet$}}}^- ] = 
-3 \, D_{\pstree{\Tr{$\bullet$}}{ \Tr{$\bullet$} \Tr{$\bullet$} \Tr{$\bullet$}}}^- \quad - \quad D_{\pstree{\Tr{$\bullet$}}{ \Tr{$\bullet$} \pstree{\Tr{$\bullet$}}{\Tr{$\bullet$}}}}^- \quad + \quad D_{\pstree{\Tr{$\bullet$}}{\pstree{\Tr{$\bullet$}}{ \Tr{$\bullet$} \Tr{$\bullet$}}}}^-.
$$

For $n \in \z$, let $\g_n = \{ x \in \g \mid [d,x] = nx \}$.  Then $\g_0 = \cc d$, $\g_n = {\rm span}_\cc \{ D_t^+ \mid |t| = n \}$ for $n>0$, and $\g_n = {\rm span}_\cc \{ D_t^- \mid |t| = -n \}$ for $n < 0$.   It is clear that $\dim \g_n < \infty$ for all $n \in \z$.  When convenient, we use the notation $\h$ for the subalgebra $\g_0$, and we show in Section \ref{sec:anti-invol} that there exists an anti-involution $\sigma : \g \to \g$ satisfying $\sigma ( \g^+) = \g^-$ and $\sigma |_\h = {\rm id}_\h$.  Thus the insertion-elimination algebra admits a regular weakly triangular decomposition $\g = \g^- \oplus \h \oplus \g^+$.  (In fact, $\g$ admits a triangular decomposition in the sense of (TD1)--(TD4) of \cite{MP95}.)  We show in Proposition \ref{prop:nonzeroBrack-n+}, that $\g^+=\bigoplus_{n>0} \g_n$ and $\g^- = \bigoplus_{n<0} \g_n$ are completely self-centralizing.

\subsection{Generalized Virasoro algebras}\label{subsec:genVir}

Here we review the definition of the generalized Virasoro algebras, as introduced in \cite{PZ}.

Let $\F$ be a field and $M$ an additive subgroup of $\F$.  Then the {\it generalized Virasoro algebra} $V(M)$ has a basis $\{z \} \cup \{e_\alpha \mid \alpha \in M\}$ and relations
\begin{align*}
[e_\alpha, e_\beta] &= (\beta-\alpha)e_{\alpha+\beta} + \beta^3 \delta_{\alpha, -\beta} z  \\
[z, e_\alpha]&=0
\end{align*}
for all $\alpha, \beta \in M$.  It is clear from the definition of $V(M)$ that it is graded by the group $M$.  In Section 2 of \cite{PZ}, the notion of an additive total ordering on $M$ (i.e. a total ordering on $M$ for which the sum of two positive elements of $M$ is positive) is used to define a decomposition $V(M) = V(M)_+ \oplus V(M)_0 \oplus V(M)_-$, where $V(M)_+ = {\rm span}_\F \{ e_\alpha \mid \alpha > 0 \}$, $V(M)_- = {\rm span}_\F \{ e_\alpha \mid \alpha < 0 \}$, and $V(M)_0 = \F e_0 \oplus \F z$ are subalgebras.  

The weight spaces of $V(M)$ are one-dimensional, and $V(M)$ possesses an anti-involution $\sigma : V(M) \to V(M)$ where $\sigma (e_\alpha ) = e_{-\alpha}$ and $\sigma (z) = z$.  If $M$ has an additive total ordering, we may regard the set $G_+ = \{ \alpha \in M \mid \alpha > 0 \}$ as a subset of $V(M)_0^*$ by identifying $\alpha \in M$ with the map that sends $e_0$ to $\alpha$ and sends $z$ to $0$.  Then it is clear that $V(M)$ admits a regular weakly triangular decomposition.  However, $V(M)$ may not have a triangular decomposition in the sense of \cite{MP95}.  For example, if $M=\mathbb R$, then (TD4) of \cite{MP95} does not hold.

It is straightforward to use the total ordering on $M$ and the fact that the weight spaces of $V(M)$ are one-dimensional to show that $V(M)_{\pm}$ is completely self-centralizing.

\section{An anti-involution for the insertion-elimination algebra}\label{sec:anti-invol}

In this section, we present an anti-involution $\sigma$ of the insertion-elimination algebra $\g$ such that $\sigma(d)=d$ and $\sigma(\g^\pm) = \g^\mp$. Equipped with this map, $\g$ possesses a weakly triangular decomposition. Additionally, the anti-involution is useful in determining all automorphisms for $\g$ in Section \ref{sec:automorph}.

\begin{lem}\label{lem:antiAutSuffic} 
Let $\sigma : \g \to \g$ be a linear function such that $\sigma(d)=d$.  Assume that for each $t \in \T$, there is $0 \neq \mu_t \in \cc$, such that $\sigma (D_t^+) = \mu_t D_t^-$, and $\sigma (D_t^-) = \mu_t^{-1} D_t^+$.   If for all $r, s, t \in \T$,  $\beta (t, s, r) \mu_t = \alpha (s, t, r ) \mu_r \mu_s,$ then $\sigma$ is an anti-involution.
\end{lem}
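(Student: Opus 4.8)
The plan is to verify the two defining properties of an anti-involution separately: that $\sigma^2=\mathrm{id}_\g$, and that $\sigma$ reverses brackets, $\sigma([x,y])=-[\sigma(x),\sigma(y)]$ for all $x,y\in\g$. The involution property is immediate on basis elements, since $\sigma^2(D_t^+)=\mu_t\sigma(D_t^-)=\mu_t\mu_t^{-1}D_t^+=D_t^+$, likewise $\sigma^2(D_t^-)=D_t^-$, and $\sigma^2(d)=d$; hence $\sigma^2=\mathrm{id}$ and $\sigma$ is in particular a linear bijection. For the bracket-reversing property, write $P(x,y)$ for the identity $\sigma([x,y])=-[\sigma(x),\sigma(y)]$. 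Both sides are bilinear and antisymmetric in $(x,y)$, so I would only need to check $P$ on unordered pairs of basis vectors from $\{d\}\cup\{D_t^\pm\}$. I would then halve the work using the involution: applying $\sigma$ to $P(x,y)$ and invoking $\sigma^2=\mathrm{id}$ shows that $P(x,y)$ implies $P(\sigma(x),\sigma(y))$, and since $\sigma$ sends $D_t^+$ to a nonzero scalar multiple of $D_t^-$, bilinearity then yields the pairs $(D_s^-,D_t^-)$ and $(d,D_t^-)$ for free once $(D_s^+,D_t^+)$ and $(d,D_t^+)$ are settled. Thus it suffices to treat the three families $(d,D_t^+)$, $(D_s^+,D_t^+)$, and $(D_s^-,D_t^+)$.

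The pair $(d,D_t^+)$ is disposed of immediately by the grading relations and $\sigma(d)=d$: both $\sigma([d,D_t^+])$ and $-[\sigma(d),\sigma(D_t^+)]$ equal $|t|\mu_t D_t^-$. For the other two families the uniform recipe is to expand $[x,y]$ by the defining relations, apply $\sigma$ to each output $D_r^\pm$ (which introduces a factor $\mu_r^{\pm 1}$), separately expand $-[\sigma(x),\sigma(y)]$ using the relations for the opposite sign, and equate the coefficient of each basis element $D_r^\pm$. After the $\mu$-factors are collected, each resulting scalar identity is exactly an instance of the hypothesis $\beta(t,s,r)\mu_t=\alpha(s,t,r)\mu_r\mu_s$ under a suitable relabeling of the three tree-variables in each summand. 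In the mixed pair $(D_s^-,D_t^+)$ I would also have to reconcile the central contribution: $\sigma([D_t^-,D_t^+])=\sigma(d)=d$, while $-[\sigma(D_t^-),\sigma(D_t^+)]=-\mu_t^{-1}\mu_t[D_t^+,D_t^-]=d$, so the central term is consistent precisely because $\mu_t^{-1}\mu_t=1$.

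The only genuine difficulty is the bookkeeping: I must line up the slot-orderings of $\alpha$ and $\beta$ as they occur in the bracket relations against the slot-ordering in the hypothesis, and apply the hypothesis with the correct substitution for each summand on each side (the two terms in the $(D_s^+,D_t^+)$ relation, and the $\alpha$- and $\beta$-parts of the $(D_s^-,D_t^+)$ relation). No combinatorial facts about rooted trees are needed at this stage, since all of that content has been packaged into the single scalar identity of the hypothesis; once the indices are correctly aligned, every coefficient comparison collapses to one application of $\beta(t,s,r)\mu_t=\alpha(s,t,r)\mu_r\mu_s$.
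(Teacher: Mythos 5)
Your proposal is correct and takes essentially the same approach as the paper: reduce to checking the bracket-reversing identity on pairs of standard basis elements, expand both sides via the defining relations, and match coefficients, with each comparison collapsing (after relabeling the tree variables) to an instance of the hypothesis $\beta(t,s,r)\mu_t = \alpha(s,t,r)\mu_r\mu_s$. Your use of $\sigma^2 = \mathrm{id}_\g$ to deduce the pairs $(D_s^-,D_t^-)$ and $(d,D_t^-)$ from the cases already verified is a tidy formalization of what the paper simply calls ``the remaining cases are similar.''
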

\begin{proof}
Suppose that the set $\{\mu_t \mid t \in \T \}$ satisfies $\beta (t, s, r) \mu_t = \alpha (s, t, r ) \mu_r \mu_s$. It is enough to show that $\sigma$ satisfies $\sigma ([a,b]) = [ \sigma (b), \sigma (a) ]$ for all $a, b$ belonging to the standard basis of $\g$.  To do this, we consider the following cases.

For $r, s \in \T$, we have 
\begin{align*}
\sigma ([ D_r^+, D_s^+]) &= \sigma \left( \sum_{t \in \T} ( \beta (t, r, s) - \beta (t, s, r)) D_t^+ \right)\\
& = \sum_{t \in \T} ( \beta (t, r, s) - \beta (t, s, r)) \mu_t D_t^-;\\
[ \sigma (D_s^+), \sigma (D_r^+)] &= \mu_s \mu_r [ D_s^-, D_r^- ] = \mu_s \mu_r \left( \sum_{t \in \T} ( \alpha (r, t, s) - \alpha (s,t,r) ) D_t^- \right).
\end{align*}
To show that $\sigma ([ D_r^+, D_s^+]) = [ \sigma (D_s^+), \sigma (D_r^+)]$, it is sufficient to show that for a given $t \in \T$, 
$$( \beta (t, r, s) - \beta (t, s, r)) \mu_t  = \mu_s \mu_r ( \alpha (r, t, s) - \alpha (s,t,r) ).$$
By assumption, we have that $\beta (t, s, r) \mu_t = \alpha (s, t, r ) \mu_r \mu_s$, and $\beta (t, r, s) \mu_t = \alpha (r, t, s ) \mu_r \mu_s$; the assertion that $\sigma ([ D_r^+, D_s^+]) = [ \sigma (D_s^+), \sigma (D_r^+)]$ follows.

We next show that $\sigma ( [D_r^-, D_s^+]) = [ \sigma (D_s^+), \sigma (D_r^-) ]$.  We have 
\begin{align*}
\sigma ([ D_r^-, D_s^+]) &=\sigma \left( \sum_{t \in \T} \alpha (r, s, t) D_t^+ + \sum_{t \in \T} \beta (r, s, t) D_t^- \right)\\
&= \sum_{t \in \T} \alpha (r, s, t) \mu_t D_t^- + \sum_{t \in \T} \beta (r, s, t) \mu_t^{-1} D_t^+;\\
[ \sigma (D_s^+), \sigma (D_r^-)] &= \mu_s \mu_r^{-1} [ D_s^-, D_r^+ ] \\
&= \mu_s \mu_r^{-1} \left( \sum_{t \in \T} \alpha (s,r,t) D_t^+ + \sum_{t \in \T} \beta (s,r,t) D_t^- \right).
\end{align*}
To show that $\sigma ([ D_r^-, D_s^+]) = [ \sigma (D_s^+), \sigma (D_r^-)]$, it is enough to show that $\beta (r,s,t) \mu_t^{-1} = \alpha (s,r,t) \mu_s \mu_r^{-1}$ and $\alpha (r, s, t) \mu_t = \beta (s,r,t) \mu_s \mu_r^{-1}$.  These are equivalent to $\beta (r,s,t) \mu_r = \alpha (s,r,t) \mu_s \mu_t$ and $\alpha (r, s, t) \mu_t \mu_r = \beta (s,r,t) \mu_s$, which hold by assumption. 

The remaining cases are similar. 
\end{proof}

The anti-involution for $\g$ is defined in terms of symmetries (i.e. graph automorphisms) of rooted trees.  In \cite{Hoff}, Hoffman used symmetries of rooted trees to define an inner product on the graded vector space spanned by rooted trees.  Under this inner product, Hoffman's growth and pruning operators, which act like $D^\pm_{\pstree{\Tr{$\bullet$}}{}}$, are adjoint operators.  

We define a {\it graph automorphism} of a rooted tree $t$ as a bijection $\tau : V(t) \to V(t)$ that fixes the root of $t$ and has the property that, for $v_1, v_2 \in V(t)$, $(v_1, v_2)$ is an edge of $t$ if and only if $( \tau (v_1), \tau (v_2) )$ is an edge of $t$.  For example, the rooted tree 
$$
\psset{levelsep=0.6cm, treesep=0.3cm}
\pstree{\Tr{$\bullet$}}{\Tr{$\bullet$} \Tr{$\bullet$} \Tr{$\bullet$} \pstree{\Tr{$\bullet$}}{\Tr{$\bullet$} \Tr{$\bullet$}}}
$$
has $12 = (3!)(2!)$ automorphisms.  

Define 
$$\Gamma_t= \{ \tau : V(t) \to V(t) \mid \tau \ \mbox{a graph automorphism of $t$} \},$$ 
 the {symmetry group} of $t$; and
\begin{equation}\label{eqn:treeAutSize}
\xi_t = | \Gamma_t|,
\end{equation}
the number of graph automorphisms of $t$.

\begin{lem}\label{lem:graphAutsWork}
For $r, s, t \in \T$, 
$$\beta (t, s, r) \xi_t = \alpha (s, t, r ) \xi_r \xi_s.$$
\end{lem}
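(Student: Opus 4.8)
The plan is to prove the identity by a bijective double-counting argument that matches the ``elimination'' count on the right-hand side with the ``insertion'' count on the left-hand side, with the automorphism counts $\xi$ arising as the sizes of torsors of rooted-tree isomorphisms. First I would introduce the set
$$X = \{ (e, \phi, \psi) \mid e \in E(t), \ \phi : R_e(t) \xrightarrow{\sim} r, \ \psi : P_e(t) \xrightarrow{\sim} s \},$$
where $\phi$ and $\psi$ range over rooted-tree isomorphisms. An edge $e$ contributes to $X$ precisely when $R_e(t) \cong r$ and $P_e(t) \cong s$, that is, for exactly the $\alpha(s,t,r)$ edges counted on the right. For each such $e$, the isomorphisms $R_e(t) \to r$ form a torsor under $\Gamma_r$ and hence number $\xi_r$, and likewise there are $\xi_s$ choices for $\psi$, so that $|X| = \alpha(s,t,r)\, \xi_r\, \xi_s$.

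Next I would introduce
$$Y = \{ (v, \theta) \mid v \in V(r), \ \theta : r \cup_v s \xrightarrow{\sim} t \},$$
with $\theta$ again a rooted-tree isomorphism. A vertex $v$ contributes exactly when $r \cup_v s \cong t$, i.e. for the $\beta(t,s,r)$ vertices counted on the left, and for each such $v$ the isomorphisms $r \cup_v s \to t$ form a torsor under $\Gamma_t$ and so number $\xi_t$. Hence $|Y| = \beta(t,s,r)\, \xi_t$. It therefore suffices to exhibit a bijection between $X$ and $Y$.

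To finish, I would construct the bijection explicitly. Given $(v, \theta) \in Y$, the tree $r \cup_v s$ carries a distinguished edge $e_0 = \{ v, \rt(s) \}$ whose removal returns $r$ as the root component and $s$ as the pruned component; I would set $e = \theta(e_0)$, observe that $\theta$ carries this decomposition to the decomposition of $t$ along $e$, and let $\phi,\psi$ be the inverses of the induced restrictions $r \to R_e(t)$ and $s \to P_e(t)$, giving a triple in $X$. Conversely, given $(e, \phi, \psi) \in X$, I would let $w$ be the endpoint of $e$ lying in $R_e(t)$, put $v = \phi(w) \in V(r)$, and define $\theta : r \cup_v s \to t$ to be $\phi^{-1}$ on $r$ and $\psi^{-1}$ on $s$, which sends the joining edge to $e$ and is a rooted isomorphism onto $t$. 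Checking that these assignments are mutually inverse then yields $|X| = |Y|$, which is exactly $\beta(t,s,r)\,\xi_t = \alpha(s,t,r)\,\xi_r\,\xi_s$.

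The one delicate point, and the main thing to verify carefully, is that the canonical joining edge $e_0$ of $r \cup_v s$ corresponds under $\theta$ to an edge $e$ of $t$ whose removal splits $t$ into the \emph{root} part $R_e(t) \cong r$ and the \emph{pruned} part $P_e(t) \cong s$ (and not the reverse), and symmetrically that gluing $r$ and $s$ along the images of the endpoints of $e$ reproduces $t$. Both reduce to the fact that a rooted-tree isomorphism fixes the root and preserves graph distance, so it distinguishes the root-containing component from the pruned component after deleting a single edge; this is what forces $\theta(r) = R_e(t)$ and makes the two constructions inverse to one another.
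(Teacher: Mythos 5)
Your proof is correct, and it takes a genuinely different route from the paper's. The paper fixes a single gluing vertex $v_0$ with $r \cup_{v_0} s = t$ and applies the orbit--stabilizer theorem twice: once to the action of $\Gamma_r$ on the vertices of $r$ (identifying $\xi_r / \beta(t,s,r)$ with the order of the stabilizer of $v_0$ in $\Gamma_r$) and once to the action of $\Gamma_t$ on the relevant edges of $t$ (identifying $\xi_t / \alpha(s,t,r)$ with the order of the stabilizer of $v_0$ in $\Gamma_t$), and then finishes with a bijection between $\{\text{automorphisms of $r$ fixing $v_0$}\} \times \Gamma_s$ and the automorphisms of $t$ fixing $v_0$. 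You instead rigidify both sides: you count triples $(e,\phi,\psi)$ and pairs $(v,\theta)$ of combinatorial data together with trivializing isomorphisms, using the fact that a nonempty set of rooted-tree isomorphisms is a torsor under the relevant automorphism group, and then exhibit a single explicit bijection between the two sets. The mathematical content is closely related---your bijection, restricted to fibers, encodes the same stabilizer decomposition---but your version has two concrete advantages: it never invokes the orbit--stabilizer theorem, and it handles the degenerate case uniformly, since the bijection shows $X = \emptyset$ if and only if $Y = \emptyset$, whereas the paper must devote a separate closing paragraph to proving that $\alpha(s,t,r) \neq 0$ if and only if $\beta(t,s,r) \neq 0$ before its ratio identity makes sense. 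The paper's argument is somewhat shorter for a reader fluent with group actions; yours is more self-contained and more readily formalized. Your identification of the delicate point---that a rooted isomorphism must carry the root component of $r \cup_v s$ minus the joining edge to $R_e(t)$ rather than $P_e(t)$---is exactly the right thing to verify, and your justification via preservation of the root is sound.
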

\begin{proof}
First suppose that $r, s, t \in \T$ are such that $\alpha (s,t,r) \neq 0$ and $\beta (t,s,r) \neq 0$.  Then  the claim is equivalent to $$\frac{\xi_r \xi_s}{\beta (t, s, r)} = \frac{\xi_t}{\alpha (s, t, r )}.$$ 
If $\alpha (s,t,r) \neq 0$, we may choose a vertex $v_0 \in V(r)$ with the property that $r \cup_{v_0} s = t$.  Note that the $\Gamma_r$-orbit of the vertex $v_0 \in V(r)$ consists of all vertices $v \in V(r)$ with the property that $r \cup_{v_0} s = t$; therefore, the size of the orbit is given by $\beta (t,s,r)$.  Then the orbit-stabilizer theorem implies that $\frac{\xi_r}{\beta (t,s,r)} = \frac{|\Gamma_r|}{\beta (t,s,r)}$ is the size of the stabilizer of $v_0$ in $\Gamma_r$, i.e. the number of automorphisms of $r$ that fix $v_0$.  

Now treating $v_0$ as an element of $V(t)$, we have that the $\Gamma_t$-orbit of $v_0$ consists of all edges $e$ with the property that $R_e(t) = r$ and $P_e(t) = s$; thus, $\alpha (s,t,r)$ counts the size of this orbit.  Then the orbit-stabilizer theorem implies that $\frac{\xi_t}{\alpha (s,t,r)} = \frac{|\Gamma_t|}{\alpha (s,t,r)}$ is the size of the stabilizer of $v_0$ in $\Gamma_t$, i.e. the number of automorphisms of $t$ that fix $v_0$.  

Note the natural bijection 
$$\left\{ \begin{array}{ll} \text{automorphisms} \\ \text{of $r$ that fix $v_0$} \end{array} \right\} \times \left\{ \begin{array}{ll} \text{automorphisms} \\ \text{of $s$} \end{array} \right\} \longleftrightarrow \left\{ \begin{array}{ll} \text{automorphisms} \\ \text{of $t$ that fix $v_0$} \end{array} \right\}.$$
Taking the cardinalities of each of the sets involved, we have 
$$\frac{\xi_r}{\beta (t,s,r)} \cdot \xi_s = \frac{\xi_t}{\alpha (s,t,r)}.$$

Now, it suffices to show that $\beta (t, s, r) \neq 0$ if and only if $\alpha (s, t, r) \neq 0$.  
Suppose that $\alpha (s,t,r) \neq 0$.  Then there exists $e \in E(t)$ such that $R_e(t) = r$ and $P_e(t) = s$, and we may therefore view $r$ as a subset of $t$.  If we let $v$ denote the vertex of $r$ on which $e$ is incident, then $t = r \cup_v s$, and it follows that $\beta (t,s,r) \neq 0$.  A similar argument shows that if $\beta (t,s,r) \neq 0$, then $\alpha (s,t,r) \neq 0$.
\end{proof}

From Lemma \ref{lem:antiAutSuffic} and Lemma \ref{lem:graphAutsWork}, we now have the following.

\begin{prop}\label{prop:antiAutoSymm}
Let $\sigma : \g \to \g$ be the linear function given by $\sigma (d) = d$ and 
$$\sigma (D_t^+) = \xi_t D_t^- \qquad \mbox{and} \qquad \sigma (D_t^-) = \xi_t^{-1} D_t^+$$
for $t \in \T$, where $\xi_t$ is as in (\ref{eqn:treeAutSize}).   Then, $\sigma$ is an anti-involution of $\g$.
\end{prop}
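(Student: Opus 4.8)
The plan is to reduce Proposition~\ref{prop:antiAutoSymm} to the two lemmas we have already established, so that the proof amounts to checking that our specific choice of scalars satisfies the hypothesis of Lemma~\ref{lem:antiAutSuffic}. The map $\sigma$ defined by $\sigma(d) = d$, $\sigma(D_t^+) = \xi_t D_t^-$, and $\sigma(D_t^-) = \xi_t^{-1} D_t^+$ is exactly of the form treated in Lemma~\ref{lem:antiAutSuffic}, with the choice $\mu_t = \xi_t$. Here $\xi_t = |\Gamma_t|$ is a positive integer, hence a nonzero complex number, so the nonvanishing requirement $0 \neq \mu_t \in \cc$ from the hypothesis of that lemma is automatic.

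The single remaining condition that Lemma~\ref{lem:antiAutSuffic} demands is that for all $r, s, t \in \T$ the identity $\beta(t,s,r)\mu_t = \alpha(s,t,r)\mu_r\mu_s$ holds. Substituting $\mu_t = \xi_t$, this becomes precisely the statement $\beta(t,s,r)\xi_t = \alpha(s,t,r)\xi_r\xi_s$, which is exactly the content of Lemma~\ref{lem:graphAutsWork}. Thus I would simply invoke Lemma~\ref{lem:graphAutsWork} to verify the hypothesis, and then apply Lemma~\ref{lem:antiAutSuffic} to conclude that $\sigma$ is an anti-involution of $\g$.

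I do not anticipate any genuine obstacle here, since the conceptual and combinatorial work has been pushed entirely into the two preceding lemmas: Lemma~\ref{lem:antiAutSuffic} isolates the single scalar relation that guarantees $\sigma$ reverses brackets and squares to the identity, while Lemma~\ref{lem:graphAutsWork} establishes that the symmetry-group orders $\xi_t$ satisfy precisely that relation via an orbit-stabilizer argument. The only point that deserves a word of care is confirming that $\sigma$ is honestly an involution (not merely an anti-homomorphism): since $\sigma(\sigma(D_t^+)) = \sigma(\xi_t D_t^-) = \xi_t \xi_t^{-1} D_t^+ = D_t^+$, and symmetrically $\sigma^2(D_t^-) = D_t^-$ and $\sigma^2(d) = d$, we have $\sigma^2 = \mathrm{id}$ on the standard basis, and hence on all of $\g$. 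With this observation recorded, the proposition follows immediately by combining the two lemmas.
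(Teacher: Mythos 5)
Your proposal is correct and is exactly the paper's own proof: the paper derives Proposition~\ref{prop:antiAutoSymm} by setting $\mu_t = \xi_t$ and combining Lemma~\ref{lem:antiAutSuffic} with Lemma~\ref{lem:graphAutsWork}. Your extra verification that $\sigma^2 = \mathrm{id}$ on the standard basis is a harmless (indeed welcome) addition, since the involutive property is already part of the conclusion of Lemma~\ref{lem:antiAutSuffic}.
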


This result implies that $\g$ has a regular weakly triangular decomposition (as well as a triangular decomposition in the sense of \cite[p.~95]{MP95}).  Also, we note that the existence of the anti-involution of Proposition \ref{prop:antiAutoSymm} implies the existence of the Shapovalov determinant, and Theorem 3.2 of \cite{S} is equivalent to the claim that the Shapovalov determinant (considered over all positive root spaces) has at most countably many zeros.

\section{Completely self-centralizing subalgebras of the insertion-elimination algebra}\label{sec:centralizeIEalg}

In this section, we show that the subalgebras $\g^\pm$ of the insertion-elimination algebra $\g$ are completely self-centralizing. (See Section \ref{subsec:genGradedLie} for definitions.)  The following lemma is used to reduce the result to a computation involving elements that are homogeneous with respect to the grading on $\g$.  The result can be stated more generally in terms of a graded Lie algebra.  However, for clarity we state the lemma in the context in which it will be used.

\begin{lem}\label{lem:genNonzeroBrack-n+}
Let $\LL = \LL^- \oplus \h \oplus \LL^+$ be a Lie algebra over $\F$ admitting a weakly triangular decomposition as defined in Section \ref{subsec:genGradedLie}.  Then the subalgebra $\LL^+$ is completely self-centralizing if and only if $[x_\alpha,y_\beta] \neq 0$ for every linearly independent pair of vectors $x_\alpha \in \LL_\alpha$, $y_\beta \in \LL_\beta$ ($\alpha, \beta \in G_+$).  

In particular, suppose that $[x_\alpha,y_\beta] \neq 0$ for every linearly independent pair of vectors $x_\alpha \in \LL_\alpha$, $y_\beta \in \LL_\beta$.  For linearly independent elements $x = \sum_{\alpha \in G_+} x_\alpha , \   y = \sum_{\alpha \in G_+} y_\alpha \in \LL^+$ with $x_\alpha, y_\alpha \in \LL^+_\alpha$, let $\nu = \max \{ \alpha \mid x_\alpha \neq 0  \}$ and $\mu = \max \{ \alpha \mid y_\alpha \neq 0 \}$, and assume $\mu \ge \nu$.  Then 
$$\proj_{\LL_{\nu+\kappa}} [x,y] \neq 0,$$
where $\kappa \in G_+$ is such that there is $c \in \F$ with $\sum_{\beta > \kappa} y_\alpha = c \sum_{\alpha > \kappa} x_\beta$ but $y_{\kappa} \neq c x_{\kappa}$. 
\end{lem}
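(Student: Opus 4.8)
The plan is to prove the biconditional first and then extract the quantitative projection statement. The forward direction is immediate: if $\LL^+$ is completely self-centralizing and $x_\alpha \in \LL_\alpha$, $y_\beta \in \LL_\beta$ are linearly independent, then $[x_\alpha, y_\beta] = 0$ would force $y_\beta \in \F x_\alpha$ (since $x_\alpha$ is self-centralizing), contradicting independence; here I must handle the case $\alpha = \beta$ separately from $\alpha \neq \beta$, but in both cases independence of the two weight vectors rules out a nonzero scalar relation. The reverse direction is the substantive one, and the second paragraph of the lemma statement is essentially the engine that drives it, so I would prove the ``in particular'' claim and then observe that it immediately yields complete self-centralization: given linearly independent $x, y \in \LL^+$, the projection of $[x,y]$ onto some weight space is nonzero, hence $[x,y] \neq 0$.

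For the quantitative claim, the key idea is to isolate the top-degree contribution to the bracket. First I would set up the notation: write $x = \sum_\alpha x_\alpha$ and $y = \sum_\alpha y_\alpha$ with highest weights $\nu$ and $\mu$ respectively, and $\mu \ge \nu$. The point of choosing $\kappa$ is that above $\kappa$ the tails of $x$ and $y$ are proportional (with constant $c$), but at $\kappa$ they are not; such a $\kappa$ exists precisely because $x$ and $y$ are linearly independent. I would then consider $z = y - cx$, whose top nonzero component is $z_\kappa = y_\kappa - c x_\kappa \neq 0$ sitting in degree $\kappa$, while all components of $z$ in degrees $> \kappa$ vanish. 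The strategy is to compute $\proj_{\LL_{\nu + \kappa}}[x, z]$ rather than $[x,y]$ directly, which is legitimate because $[x, z] = [x, y] - c[x,x] = [x,y]$.

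Now I would expand $[x, z] = \sum_{\alpha, \gamma} [x_\alpha, z_\gamma]$ and ask which terms land in weight space $\LL_{\nu + \kappa}$. Because $[\LL_\alpha, \LL_\gamma] \subseteq \LL_{\alpha + \gamma}$ whenever the bracket is nonzero, a term contributes to $\LL_{\nu+\kappa}$ only when $\alpha + \gamma = \nu + \kappa$. Using the ordering axioms from condition (4) — in particular that $\alpha + \beta > \alpha$ and monotonicity of addition — together with $\alpha \le \nu$ and $\gamma \le \kappa$ (the latter since $z$ has no components above $\kappa$), I would argue that the \emph{only} way to achieve $\alpha + \gamma = \nu + \kappa$ is $\alpha = \nu$ and $\gamma = \kappa$. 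The monotonicity is what forbids a ``trade-off'' where a smaller $\alpha$ is compensated by a larger $\gamma$: if $\alpha < \nu$ then $\alpha + \gamma \le \alpha + \kappa < \nu + \kappa$. Hence $\proj_{\LL_{\nu+\kappa}}[x,z] = [x_\nu, z_\kappa]$, and since $x_\nu \neq 0$ and $z_\kappa \neq 0$ are weight vectors, I must check they are linearly independent so that the hypothesis $[x_\nu, z_\kappa] \neq 0$ applies.

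The main obstacle is exactly this last independence check together with the careful bookkeeping of the ordering. When $\nu \neq \kappa$ the vectors $x_\nu$ and $z_\kappa$ lie in different weight spaces and are automatically independent (both being nonzero), so the hypothesis gives $[x_\nu, z_\kappa] \neq 0$ directly. The delicate case is $\nu = \kappa$: here I would need to verify that $x_\nu$ and $z_\nu = y_\nu - c x_\nu$ are linearly independent, which follows from $\nu = \mu$ (forced since $\mu \ge \nu = \kappa$ and $\kappa \le \mu$) and the defining property that $y_\kappa \neq c x_\kappa$, i.e. $z_\kappa \neq 0$ is not a scalar multiple of $x_\nu$ unless $x_\nu$ and $y_\nu$ were proportional, which the choice of $\kappa$ excludes. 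Once independence is secured in both cases, the hypothesis $[x_\nu, z_\kappa] \neq 0$ completes the argument, and I would close by remarking that this nonvanishing projection shows $[x,y] \neq 0$ for arbitrary independent $x, y \in \LL^+$, establishing that $\LL^+$ is completely self-centralizing and thereby finishing the reverse implication.
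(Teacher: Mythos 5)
Your proposal is correct and follows essentially the same route as the paper's proof: both isolate the top-degree contribution to the bracket, using the ordering axioms in condition (4) to show that $\proj_{\LL_{\nu+\kappa}}[x,y]$ reduces to the single term $[x_\nu, y_\kappa - c x_\kappa]$, and then invoke the weight-vector hypothesis on the linearly independent pair $x_\nu$, $y_\kappa - c x_\kappa$. The only cosmetic difference is that you organize the computation by replacing $y$ with $z = y - cx$ and tracking its top degree $\kappa$, where the paper splits $x$ and $y$ into components above and below $\kappa$; the case analysis (handling $\mu > \nu$ and $\kappa = \nu$ separately from $\kappa < \nu$) and the final independence check are the same in substance.
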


\begin{proof}
Clearly, if $\LL^+$ is completely self-centralizing, then  $[x_\alpha, y_\beta] \neq 0$ whenever $x_\alpha \in \LL_{\alpha}$ and $y_\beta \in \LL_\beta$ ($\alpha, \beta \in G_+$) are linearly independent.  To show the other direction, it is sufficient to prove the specific computational assertion.

Therefore, suppose $[x_\alpha,y_\beta] \neq 0$ for every linearly independent pair of vectors $x_\alpha \in \LL_\alpha$, $y_\beta \in \LL_\beta$.  Let $x,y \in \LL^+$ be arbitrary linearly independent elements, written in the form stated in the claim, and also define $\mu \ge \nu$ as in the statement of the lemma.   If $\mu > \nu$ or if $\mu=\nu$ and $\kappa = \nu$, the result is obvious.   Thus we assume below that $\mu = \nu$ and $\kappa < \nu$.

Let $\hat x = \sum_{\alpha \le \kappa} x_\alpha$, \, $\tilde x = \sum_{\alpha > \kappa} x_\alpha$, \, $\hat y = \sum_{\alpha \le \kappa} y_\alpha$, and $\tilde y = \sum_{\alpha > \kappa} y_\alpha$.  Note that $\tilde y = c \tilde x$ for some $c \in \F$, so $[ \tilde x, \tilde y]= 0$. Also, since $\kappa<\nu$, we have that $\proj_{\LL_{\kappa+ \nu}} [\hat x, \hat y]=0$. Therefore, 
\begin{align*}
\proj_{\LL_{\kappa+ \nu}}([x,y]) &= \proj_{\LL_{\kappa+ \nu}}([ \hat x, \tilde y] + [ \tilde x, \hat y])=
[x_\kappa, y_\nu] + [x_\nu, y_\kappa] \\&= [x_\kappa, c x_\nu] + [x_\nu, y_\kappa] = [x_\nu, y_\kappa- cx_\kappa].
\end{align*}
 From our choice of $\kappa$, we know $0 \neq y_\kappa - cx_\kappa \in \LL_\kappa$ and $y_\kappa-c x_\kappa, x_\nu$ are linearly independent. Therefore, $[x_\nu, y_\kappa- cx_\kappa] \neq 0$ by assumption.
\end{proof}

\begin{lem} \label{lem:nonzeroBrackWtSp}
Let $\g$ denote the insertion-elimination algebra, and let $x \in \g_m$ and $y \in \g_n$, where $0 < m, n \in \z$.  If $x, y$ are linearly independent, then $[x,y] \neq 0$. 
\end{lem}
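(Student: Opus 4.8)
The plan is to prove this by reducing to an extremal/leading-term analysis of the structure constants $\beta(r,s,t)$. Recall that for homogeneous elements, $x = \sum_{|s|=m} a_s D_s^+ \in \g_m$ and $y = \sum_{|t|=n} b_t D_t^+ \in \g_n$, and the bracket is governed by the $\beta$-coefficients, since for $D_s^+, D_t^+$ with $|s|=m$, $|t|=n$ we have $[D_s^+, D_t^+] = \sum_{|r|=m+n} (\beta(r,s,t) - \beta(r,t,s)) D_r^+$. So $[x,y] \in \g_{m+n}$ is a combination of $D_r^+$ with $|r|=m+n$, and I must show the total coefficient of \emph{some} $D_r^+$ is nonzero whenever $x,y$ are linearly independent. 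The combinatorial meaning of $\beta(r,s,t)$ is the number of vertices $v \in V(t)$ at which grafting $s$ produces $r$ (i.e. $r = t \cup_v s$); equivalently it counts the ways $r$ is built by attaching the depth-increasing piece $s$ onto $t$.

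First I would fix a suitable total order on rooted trees refining the grading, designed so that a \emph{maximal} summand can be tracked through the bracket. The natural choice is to order trees so that grafting a maximal tree onto another at the deepest available position yields a strictly maximal result; concretely, I would look at the tree $r$ obtained by attaching the $>$-largest component tree of $y$ (or $x$) at the deepest vertex of the root path. The key point is that $\beta(r,s,t)$ for the \emph{extremal} choice of $r$ relative to fixed maximal $s,t$ should be controllable — ideally equal to $1$, or at least provably nonzero and not cancelled by the reverse term $\beta(r,t,s)$. Because $D_s^+$ raises $|t|$ by $|s|=m$ while $D_t^+$ raises by $|t|=n$, and $m \ne n$ forces an asymmetry in which trees can appear, whereas if $m=n$ I must argue more carefully that the $s \leftrightarrow t$ antisymmetrization does not annihilate the leading coefficient.

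The cleanest route, and the one I would pursue, is to leverage the anti-involution $\sigma$ of Proposition \ref{prop:antiAutoSymm} together with Lemma \ref{lem:genNonzeroBrack-n+}. By Proposition \ref{prop:posIffNegative} it suffices to prove the statement for $\g^+$ (or use $\sigma$ to transfer to $\g^-$), and Lemma \ref{lem:genNonzeroBrack-n+} tells me that $\g^+$ is completely self-centralizing precisely when $[x,y]\ne 0$ for linearly independent homogeneous $x \in \g_m, y \in \g_n$ — which is exactly the present lemma. So the statement is the homogeneous base case that feeds the general reduction. I would isolate a distinguished "leading tree" by choosing, among all trees appearing in $x$ and $y$ with nonzero coefficient, the one maximal in a well-chosen order (e.g. lexicographically by depth sequence, or by the $\comps$ statistic), and then show that grafting the leading tree of one onto the leading tree of the other produces a tree $r^*$ reachable in an essentially unique way, so that $\proj_{\g_{m+n}}[x,y]$ has nonzero $D_{r^*}^+$-coefficient.

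The hard part will be the combinatorial uniqueness: showing that for the extremal target $r^*$, the coefficient $\beta(r^*, s, t) - \beta(r^*, t, s)$ (summed against the product of the leading coefficients of $x$ and $y$) cannot vanish — that is, that no \emph{other} pairs of trees $s', t'$ appearing in $x,y$ can also graft to $r^*$ and conspire to cancel the extremal contribution. This requires a careful argument that the maximality of the chosen leading trees, under the fixed order, forces $r^*$ to decompose into a grafted pair in a controlled (ideally unique) way, and that the reverse-order term $\beta(r^*,t,s)$ either vanishes or does not cancel. I expect to handle the case $m=n$ separately, since there the antisymmetrization is most dangerous; there I would argue that the leading trees of $x$ and $y$ are distinct (using linear independence), so the grafting $s$-onto-$t$ and $t$-onto-$s$ produce genuinely different extremal trees, breaking the potential cancellation. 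The robustness of this step hinges on defining the order on $\T$ so that "graft the bigger onto the smaller at the deepest spot" is order-maximizing and combinatorially rigid.
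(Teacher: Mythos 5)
Your proposal has the right silhouette --- an extremal (leading-term) analysis that exhibits a single grafted tree $r^*$ whose coefficient in $[x,y]$ provably survives, with the antisymmetrized case $m=n$ handled separately --- but it stops exactly where the proof has to start. The entire content of the lemma is the combinatorial rigidity you defer (``ideally equal to $1$, or at least provably nonzero,'' ``this requires a careful argument that\dots''), and none of the orders you float (depth sequence, $\comps$, ``graft the bigger onto the smaller at the deepest spot'') is shown to have that rigidity. The paper's proof runs on a different and quite specific invariant: for $s,t\in\T$ and $v\in V(s)$ it defines $M(s,t,v)$ as the largest $m$ such that $s$ is obtained by attaching $m$ copies of $t$ at $v$, and it maximizes $M(s_i,t_j,v)$ over all pairs of trees appearing with nonzero coefficient. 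If the chosen graft $s_1\cup_{v_1}t_1$ could be re-expressed as some other $s_i\cup_w t_j$, the grafting vertex would carry $M+1$ copies of $t_1$; a size count forces this excess multiplicity to be witnessed inside $s_i$, contradicting maximality of $M$ unless the two grafts coincide, and a separate size argument (using $m>n$) rules out the reversed grafts $t_j\cup_w s_i$. That multiplicity trick is the missing idea, and nothing in your sketch substitutes for it. Moreover, your ``cleanest route'' paragraph is circular: Proposition \ref{prop:posIffNegative} and Lemma \ref{lem:genNonzeroBrack-n+} reduce complete self-centralization of $\g^{\pm}$ to precisely the homogeneous statement being proved here, so they can contribute nothing to its proof --- you half-acknowledge this, but it means that route is a dead end rather than a strategy.

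There is also a concrete error in your $m=n$ plan. Linear independence of $x$ and $y$ does \emph{not} imply that their ``leading trees are distinct'': take $x=D_{s_1}^++D_{s_2}^+$ and $y=D_{s_1}^++2D_{s_2}^+$, which are independent yet have identical supports, hence the same leading tree in any order. The correct move, which the paper makes, is to expand in a common basis and antisymmetrize, $[x,y]=\sum_{i<j}(b_ic_j-b_jc_i)[D_{s_i}^+,D_{s_j}^+]$, use linear independence to extract a nonvanishing $2\times 2$ minor $b_1c_2-b_2c_1\neq 0$, and then prove that $s_1\cup_{v_0}s_2=s_i\cup_{v'}s_j$ forces $(i,j)=(1,2)$ by observing that the grafting edge is the \emph{unique} edge $e$ of the composite tree with $|P_e|=|s_2|$ (any other edge cuts off a piece that is either strictly smaller or strictly larger). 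So in both cases the decisive combinatorial construction is absent from the proposal, and in the $m=n$ case the step you do commit to would fail.
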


We use the following notation in the proof of Lemma \ref{lem:nonzeroBrackWtSp}, building on the notation from Section \ref{sec:IEnotation}.  For $r, t \in \T$, $v \in V(r)$, and $0 < m \in \z$, define $r \cup_v^m t$ to be the rooted tree formed by attaching $m$ copies of $t$ to $r$, with each copy connected by a single edge from $v$ to the root of that copy of $t$.   For example, 
$$
r = \psset{levelsep=0.4cm, treesep=0.4cm}
\pstree{\Tr{$\bullet$}}{\Tr{$v \bullet$\,} \Tr{$\bullet$} \pstree{\Tr{$\bullet$}}{\Tr{$\bullet$}}},
\quad 
t = \pstree{\Tr{$\bullet$}}{\pstree{\Tr{$\bullet$}}{\Tr{$\bullet$}} \pstree{\Tr{$\bullet$}}{\Tr{$\bullet$}}}
\psset{levelsep=0.6cm, treesep=0.3cm}
\quad 
\Longrightarrow
\quad
r \cup_v^3 t =  
\pstree{\Tr{$\bullet$}}{\pstree{\Tr{\, $\bullet$ \,}}
{\pstree{\Tr{$\bullet$}}{\pstree{\Tr{$\bullet$}}{\Tr{$\bullet$}} \pstree{\Tr{$\bullet$}}{\Tr{$\bullet$}}} 
\pstree{\Tr{$\bullet$}}{\pstree{\Tr{$\bullet$}}{\Tr{$\bullet$}} \pstree{\Tr{$\bullet$}}{\Tr{$\bullet$}}} 
\pstree{\Tr{$\bullet$}}{\pstree{\Tr{$\bullet$}}{\Tr{$\bullet$}} \pstree{\Tr{$\bullet$}}{\Tr{$\bullet$}}}}  
\Tr{$\bullet$} 
\pstree{\Tr{$\bullet$}}{\Tr{$\bullet$}}}.
$$
Define $r \cup_v^0 t = r$.
Additionally, for $s, t \in \T$ and $v \in V(s)$, define 
$$M( s, t, v) = \max \{ m \in \z \mid s = r \cup_v^m t \ \mbox{for some $r \subseteq s$ and $v \in V(r ) \subseteq V(s)$} \}.$$
In particular, if there does not exist $e \in E(s)$ incident on $v$ with $P_e(s) = t$, then $M(s,t,v) = 0$.  

\begin{proof}
Since the Lie bracket is antisymmetric, it is no loss to assume that $m \geq n$.  Let $\{D_{s_1}^+, \ldots, D_{s_k}^+ \}$ be a basis for $\g_m$ and $\{ D_{t_1}^+, \ldots, D_{t_{\ell}}^+ \}$ be a basis for $\g_n$; if $m=n$, we assume the two bases coincide.  Writing
$$x = \sum_{1 \leq i \leq k} b_i D_{s_i}^+, \quad y = \sum_{1 \leq j \leq \ell} c_i D_{t_i}^+, \quad (b_i, c_j \in \cc),$$ we have
$$
[x,y]= \sum_{i,j} b_ic_j [D_{s_i}^+, D_{t_i}^+].
$$

First consider the case $m>n$.   Let 
$$M = \max \{ M(s_i, t_j, v) \mid 1 \le i \le k, \, 1 \le j \le \ell, b_ic_j \neq 0, v \in V(s_i) \}.$$
After possibly renumbering, it is no loss to assume that $b_1, c_1 \neq 0$ and $M = M(s_1, t_1, v_1)$ for some $v_1 \in V(s_1)$.

Using the definition of $\beta(r, s, t)$, we may rewrite the product $[x,y]$ as
$$
[x,y]= \sum_{i,j} b_ic_j [D_{s_i}^+, D_{t_j}^+] = \sum_{i, j} b_ic_j \left(\sum_{v \in V(t_j)} D^+_{t_j \cup_v s_i} - \sum_{v \in V(s_i)} D^+_{s_i \cup_v t_j} \right).
$$
To show that $[x,y] \neq 0$, we argue that $D^+_{s_1 \cup_{v_1} t_1}$ has a nonzero coefficient in $[x,y]$.  It is enough to show that $s_1 \cup_{v_1} t_1 \neq u$ for any other $u$ such that $D_u^+$ appears with a nonzero coefficient as a summand above.

First suppose that $s_1 \cup_{v_1} t_1 = s_i \cup_w t_j$ for some $i, j$ and $w \in V(s_i)$.  Then there exists $w' \in V(s_i \cup_w t_j)$ such that $M(s_i \cup_w t_j, t_1, w')=M+1$: that is, $M+1$ copies of $t_1$ are attached at $w'$.  If $w' \in V(t_j) \subseteq V(s_i \cup_w t_j)$, this forces $t_1$ to be a proper subtree of $t_j$, which is impossible as $|t_1| = |t_j|$.  Therefore, it must be that $w' \in V(s_i) \subseteq V(s_i \cup_w t_j)$.  However, this implies that $M(s_i, t_1, w')=M+1$, a violation of the maximality of $M$ unless $w'=w$ and $t_j=t_1$.  We now have $s_1 \cup_{v_1} t_1 = s_i \cup_w t_1$ and again the maximality of $M$ forces $s_i = s_1$.

Suppose instead that $s_1 \cup_{v_1} t_1 = t_j \cup_w s_i$ for some $i, j$ and $w \in V(t_j)$.  Let $w_1 \in V(t_1) \subseteq V( s_1 \cup_{v_1} t_1)$ be the root of $t_1$, connected to $s_1$ via the edge $e_1=(v_1, w_1) \in E( s_1 \cup_{v_1} t_1)$; let $v_1', w_1' \in V(t_j \cup_w s_i)$ and $e_1' \in E(t_j \cup_w s_i)$ be the corresponding vertices and edge under the isomorphism between $s_1 \cup_{v_1} t_1$ and $t_j \cup_w s_i$.  Note that $|P_{e_1'}(t_j \cup_w s_i)| =|P_{e_1}(s_1 \cup_{v_1} t_1)|= |t_1|$.  Since $|t_j| = |t_1| < |s_i|$, this forces $w_1' \in V(s_i) \subseteq V(t_j \cup_w s_i)$.
Now, if $v_1' \in V(s_i) \subseteq V(t_j \cup_w s_i)$, then $M(s_i, t_1, v_1')=M+1$; this violates the maximality of $M$.  Therefore, $v_1' \in V(t_j)$.  Since the only edge joining $t_j$ and $s_i$ is $(w, rt(s_i))$, it must be that $w_1'=w$ and $w_1'=rt(s_i)$.  Since $|P_{e_1'}(t_j \cup_w s_i)|=|P_{e_1} (s_1 \cup_{v_1} t_1)|=|t_1|$, this forces $|s_i| = |t_1|$. However, this contradicts the assumption $m>n$, so it cannot be that $s_1 \cup_{v_1} t_1 = t_j \cup_w s_i$.

Now consider the case $m=n$.  Using the common basis $\{D_{s_1}^+, \ldots, D_{s_k}^+ \}$ for $\g_m$, we have 
$$
0 = [x,y] = \sum_{i,j}  b_i c_j [ D_{s_i}^+, D_{s_j}^+ ] = \sum_{i<j} (b_i c_j - b_j c_i) [ D_{s_i}^+, D_{s_j}^+ ].
$$
Because $x$ and $y$ are linearly independent, it is no loss to assume (after possibly reordering $\{s_1, \ldots, s_k\}$) that $b_1c_2-b_2c_1 \neq 0$.   For a fixed $v_0 \in V(s_1)$, we argue that the coefficient of $D^+_{s_1 \cup_{v_0} s_2}$ is nonzero in $[x,y]$.    Suppose that $s_1 \cup_{v_0} s_2  = s_i \cup_{v'} s_j$ for some $(i,j) \neq (1,2)$ and some $v' \in V(s_i)$.  Let $e \in E(s_1 \cup_{v_0} s_2)$ be the edge connecting $s_2$ to $s_1$ (at $v_0$), $e' \in E(s_i \cup_{v'} s_j)$ be the edge connecting $s_j$ to $s_i$ (at $v'$), and $e'' \in E(s_i \cup_{v'} s_j)$ the image of $e$ under the isomorphism $s_1 \cup_{v_0} s_2  = s_i \cup_{v'} s_j$.  Note that $|P_{e''} (s_i \cup_{v'} s_j)| = |P_{e} (s_1 \cup_{v_0} s_2)| =|s_2|$. However, since $|s_2| = |s_i| = |s_j|$, it is clear that $e' \in E(s_i \cup_{v'} s_j)$ is the only edge so that $|P_{e'} (s_i \cup_{v'} s_j)| = |s_2|$. Therefore, $e'=e''$ and $v_0$ must map to $v'$ under the isomorphism $s_1 \cup_{v_0} s_2  = s_i \cup_{v'} s_j$.  This forces $s_1  = s_i$ and $s_2  = s_j$, and thus the coefficient of $D_{s_1 \cup_{v_0} s_2}^+$ is nonzero in $[x,y]$.
\end{proof}

\begin{prop}\label{prop:nonzeroBrack-n+}
The subalgebras $\g^\pm$ of the insertion-elimination algebra $\g$ are completely self-centralizing.
\end{prop}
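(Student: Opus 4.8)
The plan is to assemble this directly from the two lemmas immediately preceding it, together with the symmetry result of Proposition \ref{prop:posIffNegative}. The key observation is that the grading on $\g$ by the eigenvalues of $\ad d$ realizes the weakly triangular decomposition $\g = \g^- \oplus \h \oplus \g^+$ promised by Proposition \ref{prop:antiAutoSymm}: here $\h = \g_0 = \cc d$ is the (one-dimensional, abelian) toral part, the weight spaces are the homogeneous components $\g_n$, and the positive weights in $G_+ \subseteq \h^*$ are naturally identified with the positive integers $n$, where $\g_n = {\rm span}_\cc \{ D_t^+ \mid |t| = n \}$ for $n > 0$. Thus the hypothesis of Lemma \ref{lem:genNonzeroBrack-n+} --- that $[x_\alpha, y_\beta] \neq 0$ for every linearly independent pair of homogeneous vectors in positive weight spaces --- is precisely the statement of Lemma \ref{lem:nonzeroBrackWtSp}.

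Accordingly, the first step is to verify that Lemma \ref{lem:nonzeroBrackWtSp} supplies exactly the input required by the ``if and only if'' in Lemma \ref{lem:genNonzeroBrack-n+}. Given linearly independent $x_\alpha \in \g_m$ and $y_\beta \in \g_n$ with $m, n > 0$, Lemma \ref{lem:nonzeroBrackWtSp} guarantees $[x_\alpha, y_\beta] \neq 0$, so the criterion holds for all of $G_+$. Invoking the equivalence of Lemma \ref{lem:genNonzeroBrack-n+} then yields immediately that $\g^+ = \bigoplus_{n > 0} \g_n$ is completely self-centralizing. The second step is to pass to the negative side: since $\g$ carries the anti-involution $\sigma$ of Proposition \ref{prop:antiAutoSymm} with $\sigma(\g^+) = \g^-$, Proposition \ref{prop:posIffNegative} transfers the completely self-centralizing property from $\g^+$ to $\g^-$, completing the argument.

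In this formulation the proof of the proposition itself is a short assembly of prior results; the genuine combinatorial difficulty has already been isolated and resolved in Lemma \ref{lem:nonzeroBrackWtSp}. That lemma is where I expect the real obstacle to lie, and its proof is the heart of the matter: one must show that a distinguished summand $D^+_{s_1 \cup_{v_1} t_1}$ (respectively $D^+_{s_1 \cup_{v_0} s_2}$ in the equal-degree case) cannot be cancelled by any other term arising in the bracket expansion via the $\beta$-insertion coefficients. Controlling these cancellations requires the statistic $M(s,t,v)$ tracking the maximal number of copies of a tree attachable at a vertex, and a careful case analysis using that $|t_1| = |t_j|$ and, when $m > n$, that $|t_1| < |s_i|$, to rule out spurious tree isomorphisms. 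Once that combinatorial nonvanishing is in hand, the present proposition follows formally, so in writing it up I would keep the proof to the three citations above and let the earlier lemmas carry the weight.
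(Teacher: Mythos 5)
Your proof is correct and matches the paper's own argument exactly: the paper likewise deduces the result for $\g^+$ by combining Lemma \ref{lem:genNonzeroBrack-n+} with Lemma \ref{lem:nonzeroBrackWtSp}, and then transfers it to $\g^-$ via Proposition \ref{prop:posIffNegative}. Your observation that the genuine combinatorial work lives in Lemma \ref{lem:nonzeroBrackWtSp} is also how the paper structures things, so nothing is missing.
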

\begin{proof}
The result for $\g^+$ follows from Lemmas \ref{lem:genNonzeroBrack-n+} and \ref{lem:nonzeroBrackWtSp}. Proposition \ref{prop:posIffNegative} then implies $\g^-$ is completely self-centralizing.
\end{proof}

\section{Finite-dimensional subalgebras}

In this section, we investigate the finite-dimensional subalgebras of a Lie algebra $\LL$ with weakly triangular decomposition and then specialize to the insertion-elimination algebra $\g$.  These results play a role in the study of automorphisms in Section \ref{sec:automorph}.

Proposition \ref{prop:finSubsGeneral} does not depend on all the conditions of a weakly triangular decomposition (see Section \ref{subsec:genGradedLie}).  However, we use this terminology in the statements in order to maintain consistency in the assumptions used for our results.  Note that from Proposition \ref{prop:posIffNegative}, we need only assume that $\LL^+$ is completely self-centralizing. 

\begin{prop}\label{prop:finSubsGeneral}
Let $\LL = \bigoplus_{\alpha \in G} \LL_\alpha$ be a Lie algebra admitting a regular weakly triangular decomposition over $G$.  Assume that $\LL^+=\bigoplus_{\alpha \in G_+} \LL_{\alpha}$ is completely self-centralizing and $\dim \h =k$.   If $\mathfrak s$ is a finite-dimensional Lie subalgebra of $\LL$, then $\dim \mathfrak s \le k+2$.  Moreover, if $\dim \mathfrak s = k+2$, then $\mathfrak s= \F y \oplus \h \oplus \F x$ for some $ \alpha, \beta \in G_+$, $x \in \LL_\alpha$, $y \in \LL_{-\beta}$.
\end{prop}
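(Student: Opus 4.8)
The plan is to show that a finite-dimensional subalgebra $\mathfrak{s}$ can contain at most a one-dimensional piece of $\LL^+$ and at most a one-dimensional piece of $\LL^-$, with everything else confined to $\h$; the bound $\dim\mathfrak{s}\le k+2$ then follows by adding up the pieces. The essential leverage is that $\LL^{\pm}$ are completely self-centralizing, which forbids $\mathfrak{s}$ from containing two linearly independent elements of $\LL^+$.

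I would begin by decomposing $\mathfrak{s}$ using the grading. For $x\in\mathfrak{s}$ write $x=x^-+x_0+x^+$ with $x^{\pm}\in\LL^{\pm}$ and $x_0\in\h$; let $\mathfrak{s}^+=\{x^+:x\in\mathfrak{s}\}$ be the projection of $\mathfrak{s}$ onto $\LL^+$ (and similarly $\mathfrak{s}^-$). The first key claim is that $\dim\mathfrak{s}^+\le 1$. Suppose instead that $\mathfrak{s}$ contains elements $x,y$ whose $\LL^+$-projections $x^+,y^+$ are linearly independent. Here I would use finite-dimensionality to extract a genuinely nonzero bracket in $\LL^+$: by repeatedly bracketing with a suitable $h\in\h$ (using that the positive weights lie in the ordered semigroup $G_+$ with $\alpha+\beta>\alpha$), one can project onto the top weight components and thereby produce two linearly independent elements of $\LL^+$ inside $\mathfrak{s}$. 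Since $\LL^+$ is completely self-centralizing, their bracket is nonzero and lies in a strictly higher weight space; iterating this bracketing forces $\mathfrak{s}$ to contain elements of arbitrarily large weight (condition (4a), $\alpha+\beta>\alpha$), contradicting $\dim\mathfrak{s}<\infty$. Hence $\dim\mathfrak{s}^+\le 1$, and symmetrically (via Proposition~\ref{prop:posIffNegative}) $\dim\mathfrak{s}^-\le 1$.

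With these two bounds in hand, the dimension count is immediate: the map $x\mapsto(x^-,x^+)$ has kernel $\mathfrak{s}\cap\h$, a subspace of $\h$ of dimension at most $k$, and image inside $\mathfrak{s}^-\oplus\mathfrak{s}^+$ of dimension at most $2$, so $\dim\mathfrak{s}\le k+2$. For the equality case, suppose $\dim\mathfrak{s}=k+2$. Then we must have $\mathfrak{s}\cap\h=\h$ (so $\h\subseteq\mathfrak{s}$), $\dim\mathfrak{s}^+=1$, and $\dim\mathfrak{s}^-=1$. I would then argue that the single generators can be taken to be weight vectors: since $\h\subseteq\mathfrak{s}$ and $[\h,\mathfrak{s}]\subseteq\mathfrak{s}$, the subalgebra $\mathfrak{s}$ is $\h$-stable, hence decomposes into weight spaces, forcing $\mathfrak{s}^+=\F x$ with $x\in\LL_\alpha$ for a single $\alpha\in G_+$ and $\mathfrak{s}^-=\F y$ with $y\in\LL_{-\beta}$ for a single $\beta\in G_+$. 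This yields $\mathfrak{s}=\F y\oplus\h\oplus\F x$ as claimed.

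The main obstacle is the first claim, $\dim\mathfrak{s}^+\le 1$: one must rule out two independent positive-part projections without assuming the projections themselves already lie in $\mathfrak{s}$. The subtlety is that $x^+$ need not itself be an element of $\mathfrak{s}$. The fix is the weight-space extraction: bracketing an element $x=x^-+x_0+x^+$ of $\mathfrak{s}$ with large powers of $\ad(h)$ for generic $h\in\h$ lets one isolate the highest-weight summand of $x^+$, and a Vandermonde/eigenvalue-separation argument then places individual weight components of $x$ into $\mathfrak{s}$. Once the top positive weight components of two independent elements are shown to be linearly independent and to lie in $\mathfrak{s}$, the complete self-centralizing property closes the argument by generating an infinite ascending tower of nonzero weight spaces in $\mathfrak{s}$, which is incompatible with $\dim\mathfrak{s}<\infty$.
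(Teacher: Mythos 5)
Your overall architecture (bound the projections by $\dim \mathfrak{s}^{\pm} \le 1$, count dimensions, then settle the equality case via $\h \subseteq \mathfrak{s}$ and a weight-vector argument) matches the paper's, and both the dimension count and the equality-case analysis are fine. However, the proof of the key claim $\dim \mathfrak{s}^+ \le 1$ has a genuine gap: the ``Vandermonde/eigenvalue-separation'' step requires that the elements $(\ad_h)^j x$ with $h \in \h$ land back in $\mathfrak{s}$, and this holds only if $h$ itself lies in $\mathfrak{s}$. A finite-dimensional subalgebra of $\LL$ need not contain any nonzero element of $\h$ (for instance $\mathfrak{s}$ could be spanned by elements of the form $x^- + x^+$ with no $\h$-component, or by elements of $\LL^{\pm}$ alone), so for $h \notin \mathfrak{s}$ the brackets $(\ad_h)^j x$ generally leave $\mathfrak{s}$, and you cannot conclude that any individual weight component of $x$ --- in particular the top component of $x^+$ --- lies in $\mathfrak{s}$. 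You correctly flag that ``$x^+$ need not itself be an element of $\mathfrak{s}$'' is the main obstacle, but the proposed fix assumes exactly the $\ad \h$-stability of $\mathfrak{s}$ that is unavailable until \emph{after} one knows $\h \subseteq \mathfrak{s}$, which is itself a consequence of the bound being proved.

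This is precisely the difficulty the paper's proof is engineered to avoid. Rather than moving weight components into $\mathfrak{s}$, the paper works directly with the inhomogeneous elements $a_i = y_i + h_i + x_i$ of a putative spanning set: Lemma \ref{lem:genNonzeroBrack-n+} shows that for linearly independent projections $x_i, x_j \in \LL^+$ the bracket $[x_i, x_j]$ has nonzero projection onto a specific weight space $\LL_{\nu_i + \kappa_{i,j}}$, and maximizing $\nu_i + \kappa_{i,j}$ over all independent pairs produces a weight $M$ at which every $a_i$ has vanishing component while $[a_{i_0}, a_{j_0}]$ does not (the cross terms such as $[h_{i_0}, x_{j_0}]$ and $[y_{i_0}, x_{j_0}]$ cannot reach weight $M$). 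Hence the bracket escapes ${\rm span}_\F \{ a_1, \ldots, a_l \}$, with no need for weight vectors inside $\mathfrak{s}$. Your iteration idea --- two independent elements of $\mathfrak{s} \cap \LL^+$ generate an infinite strictly ascending chain of weights by condition (4a), contradicting finite-dimensionality --- is sound as far as it goes, but it only applies once two independent elements of $\LL^+$ are known to lie in $\mathfrak{s}$, and that is the step your argument does not supply.
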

\begin{proof}
First consider a set $\{ a_1, a_2, a_3, \ldots, a_l \}$ of linearly independent vectors in $\LL$.  We will show that $\mbox{span}_\F \{ a_1, a_2, a_3, \ldots, a_l \}$ is not closed under commutators if either $l>k+2$ or if $l=k+2$ and $\h \not\subseteq \mbox{span}_\F \{ a_1, a_2, a_3, \ldots, a_l \}$.  Thus, in these cases there cannot be a subalgebra $\mathfrak s$ of dimension $l$.

\smallskip

For each $1 \leq i \leq l$, write $a_i = y_i + h_i + x_i$, with $y_i \in \LL^-$, $h_i \in \h$, and $x_i \in \LL^+$; and suppose $\dim (\mbox{span}_\F \{ x_1, \ldots, x_l \} ) \geq 2$ (that is, there is at least one linearly independent pair among the $x_i$). For each $i$, write $x_i = \sum_{\beta \in G_+} X_{i,\beta}$ with $X_{i,\beta} \in \LL_\beta$, and let $\nu_i \in G_+$ be maximal such that $X_{i, \nu_i} \neq 0$.  

For a pair of linearly independent $x_i, x_j$, let $\kappa_{i,j}$ be the unique element of $G_+$ such that 
$$
\mbox{ $\sum_{\beta > \kappa_{i,j}} X_{j, \beta} = c \sum_{\beta > \kappa_{i,j}} X_{i, \beta}$ and $X_{j, \kappa_{i,j}} \neq c X_{i, \kappa_{i,j}}$ for some $c \in \F^*$.}
$$ 
Set $M=\max \{ \nu_i+ \kappa_{i,j} \mid x_i, x_j \ \mbox{linearly independent} \}$ and choose $i_0, j_0$ such that $\nu_{i_0}+\kappa_{i_0, j_0}=M$.  Then Lemma \ref{lem:genNonzeroBrack-n+} implies that $\proj_{\LL_M} [x_{i_0}, x_{j_0}] \neq 0$.  Therefore it's enough to show that $X_{i, M} = 0$ for all $i \in \{ 1, \ldots, l \}$, since this implies that $[a_{i_0}, a_{j_0}] \not\in {\rm span}_\F \{ a_1, a_2, a_3, \ldots, a_l \}$.  If $X_{i', M} \neq 0$ for some $i'$, then since $M > \nu_{i_0}$, it follows that $x_{i_0}$ and $x_{i'}$ are independent and $\kappa_{i_0, i'} \ge M$.  But this implies that $\nu_{i_0} + \kappa_{i_0, i'} \ge \nu_{i_0} + \nu_{i_0} + \kappa_{i_0, j_0} = \nu_{i_0} + M > M$ (by condition (4) of the definition of a weakly triangular decomposition), contradicting the maximality of $M$.

We now determine conditions that guarantee that there is at least one pair $x_i, x_j$ of linearly independent vectors (or, by similar arguments, a pair $y_i, y_j$ of linearly independent vectors) as defined above.  If $l>k+2$, this is clearly the case.  If $l=k+2$, such a pair exists unless $\h \subseteq \mbox{span}_\F \{ a_1, a_2, a_3, \ldots, a_l \}$ and all $x_i$ are scalar multiples of each other and similarly all $y_i$ are scalar multiples of each other. 

From the above argument, we may conclude the following regarding a Lie subalgebra $\mathfrak s \subseteq \LL$.  If $\dim \mathfrak s > k+2$, then $\dim \mathfrak s = \infty$.  If $\dim \mathfrak s = k+2$, then $\mathfrak h \subseteq \mathfrak s$.  To complete the proof, it remains to show, in the case that $\dim \mathfrak s = k+2$, that $\mathfrak s= \F y \oplus \h \oplus \F x$, where $ \alpha, \beta \in G_+$, $x \in \LL_\alpha$, $y \in \LL_{-\beta}$.  

If $\dim \mathfrak s = k+2$, let $\{ a_1, \ldots , a_l \}$ be a spanning set for $\mathfrak s$, and write $a_i = y_i + h_i + x_i$ as above.  Since $\h \subseteq \mathfrak s = {\rm span} \{ a_1, \ldots, a_l \}$, we may assume (after possibly relabeling and taking linear combinations) that $a_1= x_1 \neq 0$ and $a_2=y_2 \neq 0$. Note that if there is no $\alpha>0$ such that $x_1 \in \LL_\alpha$, then there is some $h \in \h \subseteq \mathfrak s$ such that $[h, x_1] \not\in \cc x_1$ and thus $[h, x_1] \not\in \mathfrak s$.  Therefore $x_1 \in \LL_\alpha$ for some $\alpha \in G_+$.  We can similarly argue that $y_2 \in \LL_{-\gamma}$ for some $\gamma \in G_+$. 
\end{proof}

\begin{exam}\label{exam:insElFinSub}
Let $\mathfrak s$ be a finite-dimensional subalgebra of the insertion-elimination Lie algebra $\g = \g^- \oplus \h \oplus \g^+$.  Proposition \ref{prop:finSubsGeneral} implies that either 
\begin{itemize}
\item $\dim \mathfrak s \le 2$; or 

\item $\dim \mathfrak s = 3$ and there exist elements $x \in \g_m$, $y \in \g_{-n}$, where $m, n > 0$, such that $\mathfrak s = \cc x \oplus \cc d \oplus \cc y$.
\end{itemize}

Note that $\g$ contains both abelian subalgebras of dimension $2$ as well as subalgebras isomorphic to the $2$-dimensional non-abelian subalgebra.  If $\dim \mathfrak s = 3$, with $x$ and $y$ as above, then there are two possibilities.  If $[x,y] = 0$ (e.g. if $\psset{levelsep=0.3cm, treesep=0.3cm} x = D_{\pstree{\Tr{$\bullet$}}{ \Tr{$\bullet$}}}^+$
and $\psset{levelsep=0.3cm, treesep=0.3cm} y = D_{\pstree{\Tr{$\bullet$}}{ \Tr{$\bullet$} \Tr{$\bullet$}}}^-$), then $[ \mathfrak s, \mathfrak s] = \cc x \oplus \cc y$ is 2-dimensional.  If $[x, y] \neq 0$, then it must be that $m = n$ and $\mathfrak s \cong \mathfrak{sl}_2( \cc )$.
\end{exam}

\begin{exam}\label{exam:genVirSubAlg}
For a field $\F$ and an additive subgroup $M \subseteq \F$, let $V(M)$ denote the generalized Virasoro algebra reviewed in Section \ref{subsec:genVir}.  If there is an additive total order on the group $M$, then $V(M)$ satisfies the assumptions of Section \ref{subsec:genGradedLie}, and it is straightforward to use Lemma \ref{lem:genNonzeroBrack-n+} to show that the subalgebras $V(M)^+ = \bigoplus_{\alpha > 0} \F e_\alpha$ and $V(M)^- = \bigoplus_{\alpha < 0} \F e_\alpha$ are completely self-centralizing.   Thus Proposition \ref{prop:finSubsGeneral} implies that a finite-dimensional subalgebra of $V(M)$ can have dimension at most 4, and a 4-dimensional subalgebra must have the form $\F e_{-\alpha} \oplus ( \F e_0 \oplus \F z ) \oplus \F e_\alpha$.   This reproduces results given in \cite{SZ} (see Lemma 3.1 of \cite{SZ}).
\qed
\end{exam}

\section{Automorphism groups} \label{sec:automorph}

In this section we prove that for a Lie algebra with a regular weakly triangular decomposition, automorphisms preserve the Cartan subalgebra.  We then specialize this result to the insertion-elimination algebra.

\begin{prop}\label{prop:autPreserveCartan}
Let $\LL = \bigoplus_{\alpha \in G} \LL_\alpha$ be a Lie algebra admitting a regular weakly triangular decomposition, and assume that $\LL^+=\bigoplus_{\alpha \in G_+} \LL_{\alpha}$ is completely self-centralizing.  If $\tau$ is a Lie algebra automorphism of $\LL$, then $\tau (\h) \subseteq \h$ and $\tau ( \LL_\alpha ) \subseteq \LL_{\alpha \circ \tau^{-1}}$.
\end{prop}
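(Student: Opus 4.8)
The plan is to show first that $\tau(\h) \subseteq \h$ and then deduce the weight-space statement as a formal consequence. The key structural input is that $\h$ can be characterized intrinsically in terms of the bracket structure, so that any automorphism must preserve it. The natural intrinsic characterization here comes from the finite-dimensional subalgebra analysis: by Proposition \ref{prop:finSubsGeneral}, every finite-dimensional subalgebra has dimension at most $\dim \h + 2$, and the ``large'' ones contain $\h$. More usefully, I expect $\h$ to be characterizable as the unique maximal abelian subalgebra $\mathfrak{a}$ of $\LL$ such that $\LL$ decomposes into simultaneous eigenspaces for $\ad \mathfrak{a}$ with the weights behaving correctly; but pinning down such a characterization that is manifestly preserved by automorphisms is delicate. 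A cleaner route is to exploit the completely self-centralizing hypothesis directly.

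Concretely, first I would take $h \in \h$ and write $\tau(h) = y + h' + x$ with $y \in \LL^-$, $h' \in \h$, $x \in \LL^+$, aiming to show $x = y = 0$. The idea is that $\ad h$ acts semisimply (diagonalizably) on $\LL$ with all eigenvalues of a controlled form, since $\h$ is abelian and $\LL = \bigoplus_\alpha \LL_\alpha$; hence $\ad \tau(h) = \tau \circ \ad h \circ \tau^{-1}$ is also semisimple with the same eigenvalues. I would then argue that the $\LL^+$-component $x$ of a semisimple element normalizing the weight decomposition must vanish: if $x \neq 0$ with top weight $\nu$, then iterating $\ad \tau(h)$ against weight vectors produces strictly weight-increasing contributions (using condition (4)(a), $\alpha + \beta > \alpha$), which is incompatible with semisimplicity and bounded spectrum. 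This is where the ordering axiom and the regularity (finite-dimensionality of weight spaces) do the real work.

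The step I expect to be the main obstacle is ruling out the off-diagonal components $x$ and $y$ of $\tau(h)$ cleanly — equivalently, showing that the only ad-semisimple elements whose adjoint action is diagonalized by the given grading are the elements of $\h$ itself. The complete self-centralizing property of $\LL^\pm$ is the essential tool: it forces any element of $\LL^+$ to have a one-dimensional centralizer inside $\LL^+$, which severely constrains how $x$ can interact with the rest. I would combine this with the observation that $\tau$ maps the (finite-dimensional, by regularity) generalized eigenspaces of $\ad h$ to those of $\ad \tau(h)$, then use Lemma \ref{lem:genNonzeroBrack-n+} to extract a contradiction from any nonzero $x$ or $y$.

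Once $\tau(\h) \subseteq \h$ is established, the weight-space statement is essentially formal. For $v \in \LL_\alpha$ and $h \in \h$, I compute $[h, \tau(v)] = [\tau(\tau^{-1}h), \tau(v)] = \tau([\tau^{-1}h, v]) = \tau(\alpha(\tau^{-1}h)\,v) = \alpha(\tau^{-1}h)\,\tau(v)$, using that $\tau^{-1} h \in \h$ by the first part. This says $\tau(v)$ is a weight vector of weight $h \mapsto \alpha(\tau^{-1} h) = (\alpha \circ \tau^{-1})(h)$, i.e. $\tau(\LL_\alpha) \subseteq \LL_{\alpha \circ \tau^{-1}}$, which is exactly the claim. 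I should note that here $\tau^{-1}$ denotes the restriction of the inverse automorphism to $\h$, viewed as a linear map $\h \to \h$, so that $\alpha \circ \tau^{-1} \in \h^*$ makes sense.
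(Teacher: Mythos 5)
Your closing computation (deducing $\tau(\LL_\alpha) \subseteq \LL_{\alpha \circ \tau^{-1}}$ once $\tau(\h) \subseteq \h$ is known) is exactly the paper's argument and is fine. The genuine gap is in the first and only substantive step: the claim you yourself flag as ``the main obstacle''---ruling out nonzero components $x \in \LL^+$, $y \in \LL^-$ of $\tau(h)$---is never actually argued, and the justification you gesture at is partly false. The spectrum of $\ad h$ need not be bounded (for the insertion-elimination algebra, $\ad d$ has spectrum $\z$), so ``incompatible with semisimplicity and bounded spectrum'' is not a usable principle; and your parenthetical that the eigenspaces of $\ad h$ are ``finite-dimensional, by regularity'' is also wrong in general: the $\lambda$-eigenspace of $\ad h$ is $\bigoplus_{\alpha(h)=\lambda} \LL_\alpha$, which can be infinite-dimensional (e.g.\ for the central element $z$ in a generalized Virasoro algebra, the $0$-eigenspace of $\ad z$ is everything). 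Your strategy can in fact be completed, but only by doing work that is absent from the write-up: since $\ad \tau(h) = \tau \circ \ad h \circ \tau^{-1}$ is diagonalizable, every vector lies in a finite-dimensional $\ad\tau(h)$-invariant subspace; if $x \neq 0$ has top weight component $x_\nu$, choose $\beta \in G_+$ and $v \in \LL_\beta$ not proportional to $x_\nu$ (possible because $\dim \LL^+ > 1$), verify using the ordering axioms that the weight-$(k\nu+\beta)$ component of $(\ad \tau(h))^k v$ is exactly $(\ad x_\nu)^k v$, which is nonzero for every $k$ by complete self-centralization, and conclude that the iterates $(\ad \tau(h))^k v$ are linearly independent---a contradiction. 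None of this appears in your proposal, and it is precisely where the hypotheses (the ordering, regularity, and complete self-centralization) do their work.

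It is worth noting that the paper takes a much shorter route, which is exactly the ``intrinsic characterization manifestly preserved by automorphisms'' that you dismissed as delicate: let $\mathsf T$ be the intersection of all subalgebras of $\LL$ of dimension $\dim \h + 2$. Proposition \ref{prop:finSubsGeneral} (which you cite but never use) shows every such subalgebra contains $\h$, so $\h \subseteq \mathsf T$; and the anti-involution $\sigma$ from the weakly triangular decomposition produces two such subalgebras, $\h \oplus {\rm span}\{X_\alpha, \sigma(X_\alpha)\}$ and $\h \oplus {\rm span}\{X_\beta, \sigma(X_\beta)\}$ with $\alpha \neq \beta$ in $G_+$, whose intersection is exactly $\h$, whence $\mathsf T = \h$. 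Since an automorphism permutes the subalgebras of dimension $\dim \h + 2$, it preserves $\mathsf T = \h$. This avoids any discussion of ad-semisimplicity and is the argument you were missing.
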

\begin{proof}
Let $\mathsf T$ denote the intersection of all subalgebras of $\LL$ of dimension $\dim \h + 2$.  Proposition \ref{prop:finSubsGeneral} implies that $\h \subseteq \mathsf T$.

Since $\LL^+$ is completely self-centralizing, there must exist $\alpha, \beta \in G_+$ with $\alpha \neq \beta$ such that $\LL_\alpha, \LL_\beta \neq 0$.  Let $0 \neq X_\alpha \in \LL_\alpha$ and $0 \neq X_\beta \in \LL_\beta$.  If $\sigma : \LL \to \LL$ is an anti-involution as in the definition of a weakly triangular decomposition, then $\h \oplus {\rm span}_\cc \{ X_\alpha, \sigma ( X_\alpha ) \}$ and $\h \oplus {\rm span}_\cc \{ X_\beta, \sigma (X_\beta ) \}$ are both subalgebras of $\LL$ of dimension $\dim \h + 2$.  Since $\alpha \neq \beta$, the intersection of these subalgebras is $\h$.  It follows that $\mathsf T = \h$.  Since $\tau$ must permute subalgebras of dimension $\dim \h + 2$, it follows that $\tau (\h ) \subseteq \h$.

 To see that $\tau ( \LL_\alpha ) \subseteq \LL_{\alpha \circ \tau^{-1}}$, let $x \in \LL_\alpha$ and $h \in \h$.  Then 
\begin{align*}
[h, \tau (x)] &= \tau ( \tau^{-1} ( [h, \tau (x)] )) = \tau ( [ \tau^{-1}(h), x ] ) = \tau (  (\alpha \circ \tau^{-1} (h)) x) \\
&= (\alpha \circ \tau^{-1} (h)) \tau (x)
\end{align*}
\end{proof}

Applying Proposition \ref{prop:autPreserveCartan} to the insertion-elimination Lie algebra gives the following.

\begin{cor}\label{cor:autPreserveCartanIE}
If $\tau$ is an automorphism of the insertion-elimination Lie algebra $\g$, then $0 \neq \tau (d) \in \cc d$.  
\end{cor}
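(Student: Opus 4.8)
The plan is to apply Proposition~\ref{prop:autPreserveCartan} to the insertion-elimination algebra $\g$ and then exploit the one-dimensionality of $\h$. By Proposition~\ref{prop:antiAutoSymm}, $\g$ admits a regular weakly triangular decomposition $\g = \g^- \oplus \h \oplus \g^+$ with $\h = \g_0 = \cc d$, and by Proposition~\ref{prop:nonzeroBrack-n+} the subalgebra $\g^+$ is completely self-centralizing. Thus the hypotheses of Proposition~\ref{prop:autPreserveCartan} are satisfied, and for any automorphism $\tau$ of $\g$ we immediately conclude $\tau(\h) \subseteq \h$.

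The remaining step is purely about dimension: since $\h = \cc d$ is one-dimensional, the inclusion $\tau(d) \in \tau(\h) \subseteq \h = \cc d$ gives $\tau(d) \in \cc d$. To upgrade this to $\tau(d) \neq 0$, I would observe that an automorphism is injective, so $\tau(d) = 0$ would force $d = 0$, which is false. Hence $0 \neq \tau(d) \in \cc d$.

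I do not expect any genuine obstacle here, as the corollary is essentially a direct specialization. The only point requiring a moment's care is verifying that the general hypotheses truly hold for $\g$: namely that $\h = \cc d$ is the degree-zero weight space and that the completely-self-centralizing property of $\g^+$ (needed to invoke Proposition~\ref{prop:autPreserveCartan}) has been established in Proposition~\ref{prop:nonzeroBrack-n+}. Both are recorded earlier in the excerpt, so the argument reduces to citing them and noting $\dim \h = 1$.
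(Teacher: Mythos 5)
Your argument is correct and is exactly the paper's route: the paper derives the corollary by specializing Proposition~\ref{prop:autPreserveCartan} to $\g$, using that $\g$ has a regular weakly triangular decomposition with $\h = \cc d$ (Proposition~\ref{prop:antiAutoSymm}) and that $\g^+$ is completely self-centralizing (Proposition~\ref{prop:nonzeroBrack-n+}). Your additional remarks on $\dim \h = 1$ and injectivity of $\tau$ just make explicit what the paper leaves implicit.
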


\begin{lem}\label{lem:autWeights}
Let $\g$ be the insertion-elimination algebra, and let $\tau \in \aut_\cc (\g)$.  Then either
\begin{itemize}
\item $\tau (d) = d$ and $\tau (D_t^\pm) \in \g_{\pm |t|}$ for all $t \in \T$; or
\item $\tau (d) = -d$ and $\tau (D_t^\pm) \in \g_{\mp |t|}$ for all $t \in \T$.
\end{itemize}
\end{lem}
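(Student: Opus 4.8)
The plan is to build on Corollary \ref{cor:autPreserveCartanIE}, which already tells us that $\tau(d) = cd$ for some nonzero scalar $c \in \cc$. The first step is to pin down $c$. I would use the grading operator interpretation of $d$: for $n \in \z$, the weight space $\g_n$ is exactly the $n$-eigenspace of $\ad d$. Since $\tau$ is an automorphism, it must send eigenspaces of $\ad d$ to eigenspaces of $\ad \tau(d) = \ad (cd) = c \,\ad d$. Concretely, if $x \in \g_n$ then $[\tau(d), \tau(x)] = \tau([d,x]) = n\tau(x)$, while $[\tau(d), \tau(x)] = c[d, \tau(x)]$; hence $[d, \tau(x)] = (n/c)\tau(x)$, so $\tau(\g_n) \subseteq \g_{n/c}$. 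Because $\tau$ is a bijection permuting the (nonzero, integer-indexed) weight spaces, $n/c$ must be an integer whenever $\g_n \neq 0$, and symmetrically $c$ must send the index set $\{n : \g_n \neq 0\}$ bijectively to itself via $n \mapsto n/c$.

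The key step is then to show $c = \pm 1$. Since $\g_n \neq 0$ for every $n \in \z$ (there exist rooted trees of every size, so $\g_1 \neq 0$ in particular), the map $n \mapsto n/c$ must be a bijection of $\z \setminus \{0\}$ to itself. Applying this to $n = 1$ forces $1/c \in \z$, and applying it to the image and using that $c \cdot (1/c) = 1$ forces $c$ to be a unit in $\z$, i.e. $c \in \{1, -1\}$. (More directly: $\tau(\g_1) \subseteq \g_{1/c}$ is nonzero, so $1/c \in \z$; and $\tau^{-1}(\g_1) \subseteq \g_c$ is nonzero, so $c \in \z$; an integer whose reciprocal is also an integer is $\pm 1$.) This gives the dichotomy $\tau(d) = d$ or $\tau(d) = -d$.

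Finally I would feed each case back into the containment $\tau(\g_n) \subseteq \g_{n/c}$. If $c = 1$, then $\tau(\g_n) \subseteq \g_n$ for all $n$; since $\g_{|t|} = {\rm span}_\cc \{D_u^+ : |u| = |t|\}$ and $\g_{-|t|} = {\rm span}_\cc \{D_u^- : |u| = |t|\}$, this says precisely $\tau(D_t^+) \in \g_{|t|}$ and $\tau(D_t^-) \in \g_{-|t|}$, which is the first alternative. If $c = -1$, then $\tau(\g_n) \subseteq \g_{-n}$, giving $\tau(D_t^+) \in \g_{-|t|}$ and $\tau(D_t^-) \in \g_{|t|}$, the second alternative.

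The main obstacle is the scalar-rigidity argument showing $c = \pm 1$; everything else is a direct transport of the eigenspace condition through $\tau$. The crux is recognizing that $\tau$ permutes the nonzero weight spaces and that the index set is all of $\z \setminus \{0\}$, so that the scaling $n \mapsto n/c$ must preserve $\z \setminus \{0\}$ in both directions, which over the integers leaves only $c = \pm 1$. I would make sure to invoke that $\g_n \neq 0$ for all $n$ (guaranteed since $\T_n \neq \emptyset$ for every $n \geq 1$) so that no weight space is vacuously mapped.
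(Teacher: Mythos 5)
Your proposal is correct and follows essentially the same route as the paper: both start from Corollary \ref{cor:autPreserveCartanIE}, transport the $\ad d$-eigenspace condition to get $\tau(\g_n) \subseteq \g_{n/c}$, and use the fact that $\g_n \neq 0$ for all $n \in \z$ together with bijectivity of $\tau$ to force $c = \pm 1$. The only cosmetic difference is the last step, where the paper deduces $|1/c| = 1$ from $\g = \tau(\g) \subseteq \bigoplus_k \g_{k/c}$ while you apply the same integrality argument to $\tau^{-1}$; these are interchangeable.
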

\begin{proof}
From Corollary \ref{cor:autPreserveCartanIE}, we know that $\tau (d) = \mu d$ for some $\mu \in \cc^*$.   The containment $\tau ( \LL_\alpha ) \subseteq \LL_{\alpha \circ \tau^{-1}}$ of Proposition \ref{prop:autPreserveCartan}, implies that $\tau ( \g_k ) \subseteq \g_{k / \mu}$ for all $k \in \z$.  This forces $\frac{k}{\mu} \in \z$ for all $k \in \z$, so $\frac{1}{\mu} \in \z$.  Write $m = \frac{1}{\mu} \in \z$.  Then since $\tau ( \g_k ) \subseteq \g_{mk}$, we have 
$$
\g = \tau(\g) =\bigoplus_{k \in \z} \tau (\g_k) \subseteq \bigoplus_{k \in \z} \g_{mk}.
$$
This forces $|m|=1$, and thus $\mu \in \{ 1, -1 \}$.  The containment $\tau ( \g_k ) \subseteq \g_{k/ \mu}$ now proves the assertion regarding $\tau (D_t^{\pm})$.
\end{proof}

\begin{lem}\label{lem:autoFix_d-Size1}
Let $\omega \in \aut_\cc (\g)$ such that $\omega (d) = d$ and $\omega ( D_{\pstree{\Tr{$\bullet$}}{}}^+ ) = D_{\pstree{\Tr{$\bullet$}}{}}^+$.  Then $\omega = {\rm id}_\g$.
\end{lem}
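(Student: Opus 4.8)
The plan is to show the hypotheses force $\omega$ to fix every basis vector, via a single induction on tree size that handles $\omega(D_t^+)$ and $\omega(D_t^-)$ together. Two reductions come first. Since $\omega(d)=d$, Lemma~\ref{lem:autWeights} gives that $\omega$ preserves the $\z$-grading, so $\omega(\g_k)\subseteq\g_k$ for all $k$; in particular $\omega(D_\bullet^-)\in\g_{-1}=\cc D_\bullet^-$, where $\bullet$ is the one-vertex tree. Writing $\omega(D_\bullet^-)=\lambda D_\bullet^-$ and applying $\omega$ to $[D_\bullet^-,D_\bullet^+]=d$, the equalities $\omega(d)=d$ and $\omega(D_\bullet^+)=D_\bullet^+$ force $\lambda=1$. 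Thus $\omega$ fixes $d$ and both $D_\bullet^{\pm}$, which starts the induction: I assume $\omega(D_s^{\pm})=D_s^{\pm}$ for all $s$ with $|s|\le n$ and prove the same for $|t|=n+1$.

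For the positive part, write $\omega(D_t^+)=\sum_{|r|=n+1}c_r D_r^+$. For any $s$ with $1\le|s|\le n$ the relation $[D_s^-,D_t^+]=\sum_r\alpha(s,t,r)D_r^+ + \sum_r\beta(s,t,r)D_r^-$ has vanishing $\beta$-terms (those would require $|r|=|s|-|t|<0$) and $\alpha$-terms of degree $|r|=n+1-|s|\le n$; hence the whole right side lies in the range already fixed by $\omega$. Since $\omega$ also fixes $D_s^-$, we get $[D_s^-,\omega(D_t^+)]=\omega([D_s^-,D_t^+])=[D_s^-,D_t^+]$ for every such $s$. It therefore suffices to prove that the family of pruning operators $\{\ad(D_s^-):1\le|s|\le n\}$ is \emph{jointly injective} on $\g_{n+1}^+$; this yields $\omega(D_t^+)=D_t^+$.

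The joint injectivity is the crux, and it is where the combinatorics of rooted trees (rather than the abstract framework) must be used. I would argue the contrapositive by a maximality argument on component size. Suppose $0\neq x=\sum_{|r|=n+1}c_rD_r^+$, choose $r^*$ in the support of $x$ maximizing $\comps(r^*)$, let $C$ be a component of $r^*$ of maximal size, and set $u=r^*\setminus C$, so that $r^*=u\cup_{\rt(u)}C$ and $1\le|C|\le n$. In $[D_C^-,x]$ the coefficient of $D_u^+$ is $\sum_{r'}c_{r'}\,\alpha(C,r',u)$, and $\alpha(C,r',u)\neq0$ forces $r'=u\cup_v C$ for some $v\in V(u)$: if $v=\rt(u)$ then $r'=r^*$, while if $v\neq\rt(u)$ then $C$ is grafted inside a component of $u$, creating a component of size strictly larger than $\comps(r^*)$, so $c_{r'}=0$ by maximality. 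Hence that coefficient equals $c_{r^*}\,\alpha(C,r^*,u)\neq0$, so $[D_C^-,x]\neq0$. Verifying this separation of the ``self-term'' from all competing trees is the step I expect to require the most care.

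Finally I would obtain the negative part by transporting through the anti-involution $\sigma$ of Proposition~\ref{prop:antiAutoSymm}. Since $\sigma$ is an anti-involution fixing $d$ with $\sigma(D_t^+)=\xi_t D_t^-$ (so it carries $\ad(D_s^+)$ to a scalar multiple of $\ad(D_s^-)$), applying $\sigma$ to the sublemma gives the mirror statement that $\{\ad(D_s^+):1\le|s|\le n\}$ is jointly injective on $\g_{-(n+1)}^-$. Running the same inductive step on $\g^-$, where now the $\alpha$-terms of $[D_s^+,D_t^-]$ vanish for degree reasons and the $\beta$-terms are already fixed, gives $\omega(D_t^-)=D_t^-$. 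This completes the induction, so $\omega$ fixes $d$ and every $D_t^{\pm}$; as these form a basis of $\g$, we conclude $\omega={\rm id}_\g$.
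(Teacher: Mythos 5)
Your proof is correct, and while it runs on the same combinatorial engine as the paper's — choosing a tree $r^*$ of maximal component size in the support, bracketing with $D_C^-$ for a maximal component $C$, and checking that any competitor $u\cup_v C$ with $v\neq \rt(u)$ would have a component of size $>\comps(r^*)$ — it is organized quite differently, and the difference matters. The paper pins down $\omega(D_t^+)$ in two stages: for trees with $\rootdeg(t)=1$ it first extracts the coefficient $c_t=1$ from the relation $[D_{\tilde t}^-,D_t^+]=D_\bullet^+$ and then kills the other coefficients by the component-maximality argument; for trees with $\rootdeg(t)>1$ it invokes Theorem \ref{thm:generators} (proved later in the paper) to write $D_t^+$ in terms of already-fixed elements. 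You instead apply the component argument to the single element $z=\omega(D_t^+)-D_t^+$, recast as a joint-injectivity statement for $\{\ad (D_s^-): 1\le |s|\le n\}$ on $\g_{n+1}$, which handles all trees of size $n+1$ uniformly and removes the dependence on the generating-set theorem entirely; this makes the lemma (and hence the description of $\aut_\cc(\g)$) self-contained rather than resting on a forward reference to Section 7. For the negative part the paper is slightly more economical — it reads off $\omega(D_t^-)=D_t^-$ directly from $[D_t^-,D_s^+]=\delta_{t,s}\,d$ once the positive part is fixed — whereas you transport joint injectivity through the anti-involution $\sigma$ of Proposition \ref{prop:antiAutoSymm}; both are valid, and your route has the small virtue of exhibiting the positive and negative halves as genuinely mirror statements.
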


\begin{proof}
We prove $\omega(D_t^\pm)=D_t^\pm$ by induction on $|t|$.  
By Lemma  \ref{lem:autWeights}, $\omega (D^-_{\pstree{\Tr{$\bullet$}}{}}) = c D^-_{\pstree{\Tr{$\bullet$}}{}}$ for some $c \in \cc$.  Applying $\omega$ to the equation $d= [D^-_{\pstree{\Tr{$\bullet$}}{}}, D^+_{\pstree{\Tr{$\bullet$}}{}}]$ yields $c=1$.  This completes the base case.

Now, for a fixed $n > 1$ we first consider $t \in \T$ with $|t| = n$ and $\rootdeg (t) = 1$. By Lemma \ref{lem:autWeights}, we may write 
\begin{equation} \label{eqn:coeffD_t=1}
\omega (D_t^+) = \sum_{u \in \T, |u| = n} c_u D_u^+.
\end{equation}
for some $c_u \in \cc$.
Since $\rootdeg (t) = 1$, we have $t = \bullet \cup_{rt(\bullet)} \tilde t$ for some $\tilde t \in \T$.   By induction, $\omega ( D_{\tilde t}^-) = D_{\tilde t}^-$ since $| \tilde t| = n-1$.  Then $\psset{levelsep=0.3cm, treesep=0.3cm}
[ D_{\tilde t}^-, D_t^+ ] = D_{\pstree{\Tr{$\bullet$}}{}}^+$; if we apply $\omega$ to this equation, we obtain 
$$\psset{levelsep=0.3cm, treesep=0.3cm}
D_{\pstree{\Tr{$\bullet$}}{}}^+ 
= [ D_{\tilde t}^-, \sum_{u \in \T, |u| = n} c_u D_u^+ ] \\
=c_t \psset{levelsep=0.3cm, treesep=0.3cm}
[ D_{\tilde t}^-, D_t^+ ] = c_t D_{\pstree{\Tr{$\bullet$}}{}}^+.
$$
In particular, $[ D_{\tilde t}^-, D_u^+ ] = 0$ for $u \neq t$ with $|u| = n$, since $t$ is the unique tree of size $n$ with $\tilde t$ as a component.  Therefore, $c_t = 1$.

Suppose that there exists $u \neq t$ with $c_u \neq 0$ in (\ref{eqn:coeffD_t=1}).  Among those $u \neq t$ with $c_u \neq 0$, fix $q$ with $\comps (q)$ maximal.  Let $q_0$ be a component of $q$ with $|q_0|=\comps (q)$, and regard $q  = q' \cup_v q_0$, where $v = \rt (q') = \rt (q)$.  Let $X = [D_{q_0}^-, D_t ^+] \in {\rm span}_\cc \{ D_s^+ \mid |s| < n \}$.  Then $\omega (X) = X$ and $\omega (D_{q_0}^-) = D_{q_0}^-$ by induction, so applying $\omega$ to the equation $X = [D_{q_0}^-, D_t ^+]$ yields 
\begin{align*}
X &= [D_{q_0}^-, \omega (D_t ^+) ] = [D_{q_0}^-, D_t^+ + \sum_{t \neq u \in \T, \; |u| = n} c_u D_u^+ ] \\
&= X + [D_{q_0}^-, \sum_{t \neq u \in \T, \; |u| = n} c_u D_u^+ ] \\
&= X + \sum_{t \neq u \in \T, \; |u| = n} c_u [D_{q_0}^-,  D_u^+ ].\\
\Rightarrow \quad 0&=\sum_{t \neq u \in \T, \; |u| = n} c_u [D_{q_0}^-,  D_u^+ ]. 
\end{align*}
By our choice of $q$, we have that $c_u=0$ if $\comps(u)>|q_0|$.  
 If $\comps (u)<|q_0|$, then $[D_{q_0}^-,  D_u^+ ] = 0$.  If $\comps (u)=| q_0 |$, then $[D_{q_0}^-,  D_u^+ ] = k_s D_{s}^+$, where $k$ is the number of components (possibly 0) of $u$ that are isomorphic to $q_0$ and $s$ is the tree formed from $u$ by removing some (any) component isomorphic to $q_0$.   In particular, $[D_{q_0}^-,  D_{q}^+ ] = k_{q'} D_{q'}^+$; and the coefficient of $D_{q'}^+$ is 0 in $[D_{q_0}^-,  D_u^+]$ when $u \neq q$ with $c_u \neq 0$.   This forces $c_{q} = 0$, which is a contradiction.  Thus the assumption that $c_u \neq 0$ for some $u \neq t$ must be incorrect.

We now have that $\omega (D_t^+) = D_t^+$ whenever $|t| = n$ and $\rootdeg (t) = 1$, and we also have $\omega (D_r^+) = D_r^+$ whenever $|r| < n$.  From Theorem \ref{thm:generators} (below), we know that every element of $\g_n \subseteq \g^+$ can be generated by elements $D_r^+ \in \bigoplus_{1 \le i \le n} \g_i$ such that $\rootdeg (r) = 1$.  Thus if $u \in \T$ with $|u| = n$ (and not necessarily $\rootdeg (u) = 1$), $D_u^+$ can be expressed in terms of elements of $\g^+$ that are fixed by $\omega$, so $\omega (D_u^+) = D_u^+$.

Finally, we show that $\omega (D_t^-) = D_t^-$ for any $t \in \T$.  Assume $|t| = n$.  From Lemma \ref{lem:autWeights}, we have $\omega (D_t^-) = \sum_{u \in \T, |u| = n} b_u D_u^-$ for $b_u \in \cc$.   If $s \in \T$ with $|s| = n$, then $[D_t^-, D_s^+] = \delta_{t,s} \, d$ and $\omega (D_s^+) = D_s^+$.  Thus we have 
$$\delta_{t,s} \, d = \omega ( [D_t^-, D_s^+] ) = [ \omega (D_t^-), \omega (D_s^+)] = [ \sum_{u \in \T, |u| = n} b_u D_u^- , D_s^+ ] = b_s d.$$
Thus $b_t = 1$ and $b_s = 0$ for $s \neq t$.  
\end{proof}

For $\zeta \in \cc^* = \cc \setminus \{ 0 \}$, define the linear map $\tau_{\zeta}: \g \rightarrow \g$ defined by
\begin{equation}\label{eqn:tau_zeta}
\tau_\zeta (d) = d, \qquad \tau_\zeta (D_t^+) = \zeta^{|t|} D_t^+, \qquad \tau_\zeta (D_t^-) = \zeta^{-|t|} D_t^-, \quad \mbox{for $t \in \T$}
\end{equation}
It's straightforward to verify that $\tau_{\zeta}$ is an automorphism of $\g$.  (We note the automorphisms $\tau_\zeta$ are of the form  $\tau_\theta (x_n) = \theta(n) x_n$ where $\theta: \z \rightarrow cc^*$ is a homomorphism, $\z$ grades $\g$, and $x_n \in \g_n$.)

\begin{lem}\label{lem:tauFix_d}
Let $\tau \in \aut_\cc (\g)$ such that $\tau (d) = d$.  Then $\tau = \tau_\zeta$ for some $\zeta \in \cc^*$.  
\end{lem}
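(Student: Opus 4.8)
The plan is to reduce the statement to Lemma \ref{lem:autoFix_d-Size1} by pre- or post-composing $\tau$ with one of the model automorphisms $\tau_\zeta$ from (\ref{eqn:tau_zeta}). Since $\tau(d) = d$, Lemma \ref{lem:autWeights} applies in its first alternative, so $\tau(\g_n) \subseteq \g_n$ for every $n \in \z$; in particular $\tau$ preserves the weight space $\g_1$. This space is one-dimensional, spanned by $D_\bullet^+$, where $\bullet$ denotes the unique tree of size $1$. Hence $\tau(D_\bullet^+) = \zeta D_\bullet^+$ for some scalar $\zeta$, and $\zeta \in \cc^*$ because $\tau$ is invertible.

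With this $\zeta$ fixed, I would form the composition $\omega = \tau_\zeta^{-1} \circ \tau$. Since $\tau_\zeta^{-1} = \tau_{\zeta^{-1}}$ is again one of the automorphisms defined in (\ref{eqn:tau_zeta}) and $\tau$ is an automorphism, $\omega \in \aut_\cc(\g)$. A direct check shows $\omega(d) = \tau_\zeta^{-1}(\tau(d)) = \tau_\zeta^{-1}(d) = d$, and, using $|\bullet| = 1$, that $\omega(D_\bullet^+) = \tau_\zeta^{-1}(\zeta D_\bullet^+) = \zeta \cdot \zeta^{-1} D_\bullet^+ = D_\bullet^+$.

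Thus $\omega$ satisfies exactly the hypotheses of Lemma \ref{lem:autoFix_d-Size1}, which forces $\omega = {\rm id}_\g$. Unwinding the composition then gives $\tau = \tau_\zeta$, as desired.

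The substantive content all lives in Lemma \ref{lem:autoFix_d-Size1} (and, through it, in the generation result of Theorem \ref{thm:generators} and the bracket computations of the preceding sections), so there is no genuine obstacle remaining at this stage. The only point requiring a little care is that the normalizing scalar $\zeta$ must be read off from the one-dimensional space $\g_1$, rather than from some higher weight space: this is precisely what makes $\omega$ fix $D_\bullet^+$ exactly and so lets us invoke Lemma \ref{lem:autoFix_d-Size1} without further adjustment.
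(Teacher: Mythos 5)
Your proposal is correct and follows essentially the same route as the paper: both read off the scalar $\zeta$ from the action of $\tau$ on the one-dimensional space $\g_1$ (using Lemma \ref{lem:autWeights}), compose with $\tau_\zeta^{-1}$ to obtain an automorphism fixing $d$ and $D_{\bullet}^+$, and then invoke Lemma \ref{lem:autoFix_d-Size1}. The only (immaterial) difference is the order of composition --- the paper takes $\omega = \tau \circ \tau_\zeta^{-1}$ while you take $\omega = \tau_\zeta^{-1} \circ \tau$ --- and either choice yields $\tau = \tau_\zeta$ once $\omega = {\rm id}_\g$ is established.
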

\begin{proof}
Since $\tau (d) = d$, Lemma \ref{lem:autWeights} implies that $\tau (\g_m) \subseteq \g_m$ for all $m \in \z$.  Let $\zeta \in \cc^*$ such that $\tau (D_{\pstree{\Tr{$\bullet$}}{}}^+) = \zeta D_{\pstree{\Tr{$\bullet$}}{}}^+$, and define $\tau_\zeta$ as in (\ref{eqn:tau_zeta}).  For $\omega = \tau \circ \tau_{\zeta}^{-1}$, we get that $\omega (d) = d$, $\omega ( D_{\pstree{\Tr{$\bullet$}}{}}^+ ) = D_{\pstree{\Tr{$\bullet$}}{}}^+$.    The result now follows from Lemma \ref{lem:autoFix_d-Size1}.  
\end{proof}

Define an anti-automorphism $S: \g \rightarrow \g$ by $S(x)=-x$ for all $x \in \g$.  (Extended to $U(\g)$, this is the standard Hopf algebra antipode.) With $\sigma : \g \to \g$ as in Theorem \ref{prop:antiAutoSymm}, let 
\begin{equation}\label{eqn:tau0}
\tau_0  = \sigma \circ S.
\end{equation}
Since the composition of two anti-automorphisms is an automorphism, we have that $\tau_0 \in \aut_\cc (\g)$.

We can now describe $\aut_\cc (\g)$.

\begin{thm}\label{thm:AutInsElAlg}
Let $\mathcal A_{\cc^*} = \{ \tau_\zeta \mid \zeta \in \cc^* \}$, and $\mathcal A_0 = \{ {\rm id}_\g , \tau_0 \}$.  Then 
$$\aut_\cc (\g) =  \mathcal A_{\cc^*} \rtimes \mathcal A_0,$$
where $\tau_0 \circ \tau_\zeta \circ \tau_0 = \tau_{\zeta^{-1}}$.
\end{thm}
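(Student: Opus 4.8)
**

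The plan is to show $\aut_\cc(\g) = \mathcal{A}_{\cc^*} \rtimes \mathcal{A}_0$ by establishing four things: that $\mathcal{A}_{\cc^*}$ is a normal subgroup, that $\mathcal{A}_0$ is a subgroup, that the two intersect trivially, and that together they generate all of $\aut_\cc(\g)$. The main structural work has already been done in the preceding lemmas, so this proof is primarily an assembly argument.

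First I would verify that $\mathcal{A}_{\cc^*}$ is a subgroup of $\aut_\cc(\g)$: a direct computation on the defining formulas \eqref{eqn:tau_zeta} shows $\tau_\zeta \circ \tau_{\zeta'} = \tau_{\zeta\zeta'}$, so $\mathcal{A}_{\cc^*} \cong \cc^*$ as a group, with identity ${\rm id}_\g = \tau_1$. Next I would check that $\mathcal{A}_0 = \{{\rm id}_\g, \tau_0\}$ is a subgroup of order two by confirming $\tau_0^2 = {\rm id}_\g$; since $\tau_0 = \sigma \circ S$ with $\sigma$ an anti-involution (Proposition \ref{prop:antiAutoSymm}) and $S$ the negation antipode, and since $S$ commutes with $\sigma$, one gets $\tau_0^2 = \sigma S \sigma S = \sigma^2 S^2 = {\rm id}_\g$. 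The conjugation relation $\tau_0 \circ \tau_\zeta \circ \tau_0 = \tau_{\zeta^{-1}}$ then follows by tracking the action on the generators $d$ and $D_{\pstree{\Tr{$\bullet$}}{}}^\pm$: computing $\tau_0(D_{\pstree{\Tr{$\bullet$}}{}}^+) = \sigma(S(D_{\pstree{\Tr{$\bullet$}}{}}^+)) = -\sigma(D_{\pstree{\Tr{$\bullet$}}{}}^+) = -\xi_{\pstree{\Tr{$\bullet$}}{}} D_{\pstree{\Tr{$\bullet$}}{}}^-$, and similarly for the minus generator, shows how $\tau_0$ swaps the plus and minus parts, so conjugating $\tau_\zeta$ inverts the scaling factor. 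This relation also establishes normality of $\mathcal{A}_{\cc^*}$ in the subgroup generated by both pieces, and makes the semidirect product structure explicit. That the intersection $\mathcal{A}_{\cc^*} \cap \mathcal{A}_0$ is trivial is immediate since $\tau_0(d) = \sigma(-d) = -d \neq d$ while every $\tau_\zeta$ fixes $d$.

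The crux is surjectivity: every $\tau \in \aut_\cc(\g)$ lies in $\mathcal{A}_{\cc^*} \rtimes \mathcal{A}_0$. Here I would invoke Lemma \ref{lem:autWeights}, which splits automorphisms into two cases according to whether $\tau(d) = d$ or $\tau(d) = -d$. In the first case, Lemma \ref{lem:tauFix_d} gives immediately that $\tau = \tau_\zeta \in \mathcal{A}_{\cc^*}$. In the second case, where $\tau(d) = -d$, I would consider the composite $\tau_0^{-1} \circ \tau = \tau_0 \circ \tau$; since $\tau_0(d) = -d$, this composite fixes $d$, so by Lemma \ref{lem:tauFix_d} it equals some $\tau_\zeta$, whence $\tau = \tau_0 \circ \tau_\zeta \in \mathcal{A}_0 \cdot \mathcal{A}_{\cc^*}$. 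Using the conjugation relation this can be rewritten in the standard semidirect-product normal form.

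I expect the main obstacle to be purely bookkeeping: carefully confirming the conjugation relation $\tau_0 \circ \tau_\zeta \circ \tau_0 = \tau_{\zeta^{-1}}$ on all three families of generators $d, D_t^+, D_t^-$ (not merely on the single-vertex tree), since $\tau_0$ introduces the symmetry factors $\xi_t$ and the sign from $S$, and one must check these cancel correctly so that the net effect on $\g_{\pm n}$ is multiplication by $\zeta^{\mp n} \mapsto \zeta^{\pm n}$. Once the action of $\tau_0$ on a general $D_t^\pm$ is recorded, the verification reduces to comparing scalar weights degree by degree, and no combinatorics of trees beyond the already-established value of $\xi_t$ is needed.
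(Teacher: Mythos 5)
Your proposal is correct and follows essentially the same route as the paper: the case split via Lemma \ref{lem:autWeights} into $\tau(d)=\pm d$, reduction to $\tau_\zeta$ via Lemma \ref{lem:tauFix_d} (composing with $\tau_0$ in the second case), trivial intersection from the action on $d$, and direct verification of $\tau_0\circ\tau_\zeta\circ\tau_0=\tau_{\zeta^{-1}}$ on the full basis $\{d\}\cup\{D_t^\pm\mid t\in\T\}$. Your final paragraph rightly corrects the earlier phrasing about checking only $d$ and $D_{\pstree{\Tr{$\bullet$}}{}}^\pm$ — since these elements do not generate $\g$, the conjugation identity must indeed be verified (as an equality of linear maps) on every $D_t^\pm$, which is exactly what the paper does.
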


\begin{proof}
Clearly both $\mathcal A_{\cc^*}$ and $\mathcal A_0$ are subgroups of $\aut_\cc (\g)$. (In particular, $\tau_\zeta \circ \tau_\nu = \tau_{\zeta \nu}$ for $\zeta, \nu \in \cc^*$ and $\tau_0^2 = {\rm id}_\g$.)

We first show that for $\tau \in \aut_\cc (\g)$, there exist $\tau_\zeta \in \mathcal A_{\cc^*}$ and $\gamma \in \mathcal A_0$ such that $\tau = \tau_\zeta \circ \gamma$.  
Lemma \ref{lem:autWeights} implies that either $\tau (d) = d$ or $\tau (d) = -d$.  If $\tau (d) = d$, then from Lemma \ref{lem:tauFix_d} we have that $\tau = \tau_\zeta$ for some $\zeta \in \cc^*$. Then the result follows with $\gamma = {\rm id}_\g$.  If $\tau (d) = -d$, then $(\tau \circ \tau_0 )(d) = d$; and thus Lemma \ref{lem:tauFix_d} implies that $\tau \circ \tau_0 = \tau_\zeta$ for some $\zeta \in \cc^*$.  Therefore $\tau = \tau_\zeta \circ \tau_0$ as desired.  The uniqueness of the decomposition $\tau = \tau_\zeta \circ \gamma$ follows from the fact that $\mathcal A_{\cc^*} \cap \mathcal A_0 = \{ {\rm id}_\g \}$.

If $t \in \T$, it is straightforward to verify that $(\tau_0 \circ \tau_\zeta \circ \tau_0) (D_t^+) = \tau_{\zeta^{-1}} ( D_t^+)$ and $(\tau_0 \circ \tau_\zeta \circ \tau_0) (D_t^-) = \tau_{\zeta^{-1}} (D_t^-)$.  Since $(\tau_0 \circ \tau_\zeta \circ \tau_0) (d) = d = \tau_{\zeta^{-1}}(d)$, the claim $\tau_0 \circ \tau_\zeta \circ \tau_0 = \tau_{\zeta^{-1}}$ follows. \end{proof}

\begin{cor}\label{cor:centerAutIE}
The center $Z(\aut_\cc (\g))$ of $\aut_\cc (\g)$ is $\mathcal A_0= \{ {\rm id}_\g, \tau_{-1} \}$.
\end{cor}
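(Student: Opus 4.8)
The plan is to carry out the entire computation inside the group $\aut_\cc(\g) = \mathcal A_{\cc^*} \rtimes \mathcal A_0$ furnished by Theorem \ref{thm:AutInsElAlg}, using only the relations recorded there: every automorphism is uniquely $\tau_\zeta$ or $\tau_\zeta \circ \tau_0$ for some $\zeta \in \cc^*$; the subgroup $\mathcal A_{\cc^*}$ is abelian with $\tau_\zeta \circ \tau_\nu = \tau_{\zeta \nu}$; and $\tau_0 \circ \tau_\zeta \circ \tau_0 = \tau_{\zeta^{-1}}$ with $\tau_0^2 = {\rm id}_\g$. Because $\aut_\cc(\g)$ is generated by $\mathcal A_{\cc^*}$ together with $\tau_0$, an element lies in the center exactly when it commutes with every $\tau_\nu$ $(\nu \in \cc^*)$ and with $\tau_0$; this reduces the whole statement to two commutation checks.

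First I would establish $\{{\rm id}_\g, \tau_{-1}\} \subseteq Z(\aut_\cc(\g))$. The element $\tau_{-1}$ lies in the abelian subgroup $\mathcal A_{\cc^*}$, so it commutes with each $\tau_\nu$; and since $(-1)^{-1} = -1$, the relation $\tau_0 \circ \tau_{-1} \circ \tau_0 = \tau_{(-1)^{-1}} = \tau_{-1}$ shows it commutes with $\tau_0$ as well. Thus $\tau_{-1}$ is central. It is a nontrivial involution: for any $t \in \T$ with $|t|$ odd we have $\tau_{-1}(D_t^+) = (-1)^{|t|} D_t^+ = -D_t^+ \neq D_t^+$, while $\tau_{-1}^2 = \tau_{(-1)^2} = \tau_1 = {\rm id}_\g$.

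For the reverse containment I would take an arbitrary central element $g$ and split into the two cosets of $\mathcal A_{\cc^*}$. If $g = \tau_\zeta$, commuting with $\tau_0$ gives $\tau_\zeta \circ \tau_0 = \tau_0 \circ \tau_\zeta = \tau_{\zeta^{-1}} \circ \tau_0$, whence $\tau_\zeta = \tau_{\zeta^{-1}}$ and so $\zeta = \zeta^{-1}$; this forces $\zeta \in \{1, -1\}$, recovering exactly ${\rm id}_\g$ and $\tau_{-1}$. If instead $g = \tau_\zeta \circ \tau_0$, I would test commutativity against a general $\tau_\nu$: moving $\tau_0$ past $\tau_\nu$ via $\tau_0 \circ \tau_\nu = \tau_{\nu^{-1}} \circ \tau_0$ yields $g \circ \tau_\nu = \tau_{\zeta \nu^{-1}} \circ \tau_0$, whereas $\tau_\nu \circ g = \tau_{\nu \zeta} \circ \tau_0$. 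Centrality then demands $\zeta \nu^{-1} = \nu \zeta$, i.e.\ $\nu = \nu^{-1}$ for every $\nu \in \cc^*$, which is false (take $\nu = 2$). Hence no element of the nontrivial coset is central, and combining the two cases gives $Z(\aut_\cc(\g)) = \{{\rm id}_\g, \tau_{-1}\}$.

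I do not expect a genuine obstacle here: once Theorem \ref{thm:AutInsElAlg} exhibits $\aut_\cc(\g)$ as the generalized dihedral group $\cc^* \rtimes \z/2\z$ (the factor $\z/2\z$ acting by inversion), the center is computed by a routine argument. The only place demanding care is the bookkeeping in the reflection coset $\tau_\zeta \circ \tau_0$ — correctly commuting $\tau_0$ through $\tau_\nu$ using $\tau_0 \circ \tau_\nu = \tau_{\nu^{-1}} \circ \tau_0$, so as not to mis-track an inversion — together with the (standard) observation that it suffices to test centrality on the generating set $\{\tau_\nu \mid \nu \in \cc^*\} \cup \{\tau_0\}$.
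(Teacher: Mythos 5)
Your proof is correct and follows essentially the same route as the paper's: both arguments work purely group-theoretically inside the semidirect product $\mathcal A_{\cc^*} \rtimes \mathcal A_0$ of Theorem \ref{thm:AutInsElAlg}, using the relation $\tau_0 \circ \tau_\zeta \circ \tau_0 = \tau_{\zeta^{-1}}$ to force $\zeta = \pm 1$ and to exclude the coset containing $\tau_0$. If anything, your explicit computation showing no element $\tau_\zeta \circ \tau_0$ commutes with all $\tau_\nu$ fills in a small step the paper leaves as an unproved assertion (``since $\tau_0$ is not central'').
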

\begin{proof}
It is clear that $\{ {\rm id}_\g, \tau_{-1} \} \subseteq Z(\aut_\cc (\g))$.  To show the opposite containment, consider an arbitrary element $\tau_\zeta \circ \gamma \in \aut_\cc (\g)$, where $\zeta \in \cc^*$ and $\gamma \in \mathcal A_0$.  If $\tau_\zeta \circ \gamma$ is central, then it is fixed under conjugation by $\tau_0 = \tau_0^{-1}$; therefore, 
$$\tau_\zeta \circ \gamma = \tau_0 \circ \tau_\zeta \circ \gamma \circ \tau_0 = \tau_0 \circ \tau_\zeta \circ \tau_0 \circ \gamma = \tau_{\zeta^{-1}} \circ \gamma.$$
This implies that $\tau_\zeta = \tau_{\zeta^{-1}}$, and thus $\zeta = \pm 1$. Moreover, this shows that if $\tau_\zeta \circ \gamma$ is central, then $\tau_\zeta$ is central.  Consequently $\gamma \in \mathcal A_0$ must be central.  Since $\tau_0$ is not central, we have $\gamma = {\rm id}_\g$.
\end{proof}

\begin{cor}
Every anti-automorphism of $\g$ has the form $\sigma \circ \tau$, with $\tau \in \aut_\cc (\g) = \mathcal A_{\cc^*} \rtimes \mathcal A_0$.
\end{cor}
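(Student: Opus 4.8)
The plan is to exploit the fact that the anti-involution $\sigma$ of Proposition \ref{prop:antiAutoSymm} squares to the identity, together with the elementary observation that composing two anti-automorphisms yields an automorphism. First I would fix an arbitrary anti-automorphism $\phi$ of $\g$, meaning a linear bijection satisfying $\phi([a,b]) = [\phi(b), \phi(a)]$, and set $\tau = \sigma \circ \phi$.

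The key step is to verify that $\tau \in \aut_\cc(\g)$. For $a, b \in \g$, since both $\phi$ and $\sigma$ reverse brackets, I would compute
\begin{align*}
\tau([a,b]) &= \sigma\bigl( \phi([a,b]) \bigr) = \sigma\bigl( [\phi(b), \phi(a)] \bigr) \\
&= [\sigma(\phi(a)), \sigma(\phi(b))] = [\tau(a), \tau(b)],
\end{align*}
so $\tau$ is a Lie algebra homomorphism; as a composite of two bijections it is bijective, hence an automorphism. Because $\sigma$ is an involution, $\sigma \circ \sigma = {\rm id}_\g$, and thus $\sigma \circ \tau = \sigma \circ \sigma \circ \phi = \phi$. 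This realizes the arbitrary anti-automorphism $\phi$ in the desired form $\sigma \circ \tau$.

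Finally I would invoke Theorem \ref{thm:AutInsElAlg} to record that $\tau$ lies in $\aut_\cc(\g) = \mathcal A_{\cc^*} \rtimes \mathcal A_0$, completing the identification. There is no real obstacle here: the statement is essentially the observation that the anti-automorphisms form a single coset $\sigma \cdot \aut_\cc(\g)$ inside the group generated by automorphisms and anti-automorphisms, and the only point requiring care is ensuring $\sigma^2 = {\rm id}_\g$, which is exactly the content of $\sigma$ being an anti-involution.
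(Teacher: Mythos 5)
Your proof is correct and is exactly the argument the paper intends: the corollary is stated there without proof as an immediate consequence of Theorem \ref{thm:AutInsElAlg}, the point being precisely that $\sigma \circ \phi$ is an automorphism and $\sigma^2 = {\rm id}_\g$, so every anti-automorphism lies in the coset $\sigma \cdot \aut_\cc(\g)$. No gaps; your verification that the composite of two anti-automorphisms is an automorphism and the use of the involution property are the only ingredients needed.
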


\begin{exam}\label{exam:genVirAut}
Let $V(M)$ denote the generalized Virasoro algebra as described in Section \ref{subsec:genVir}.  For $\theta : M \to \mathbb F^*$ with $\theta ( \alpha + \beta ) = \theta (\alpha ) \theta (\beta)$, define $\tau_\theta \in \aut_\cc (\g)$ by 
$$\tau_\theta (z) = z \qquad \mbox{and} \qquad \tau_\theta (e_\alpha ) = \theta (\alpha ) e_\alpha.$$
If $\zeta \in \mathbb F^*$ with $M \zeta^{-1} = M$, define $\kappa_\zeta : V(M) \to V(M)$ by $\kappa_\zeta (e_\alpha ) = \zeta e_{\alpha \zeta^{-1}}$ and $\kappa_\zeta (z) = \zeta^{-1}z$.  In \cite{SZ} Theorem 2.3, it is shown that 
$${\rm Aut}_\F (V(M)) = \mathcal A_0 \ltimes \mathcal A_1,$$
where $\mathcal A_0 = \{ \kappa_\zeta \mid \zeta \in \mathbb F^*, \ M \zeta^{-1} = M \}$ and $\mathcal A_1 = \{ \tau_\theta \mid \theta \in {\rm Hom}(M, \mathbb F^*) \}$.

We note that this closely mirrors the structure of the automorphism group of the insertion-elimination algebra and is consistent with the general structure described in Proposition \ref{prop:autPreserveCartan}. 

\end{exam}

\section{Derivations for graded Lie algebras}

Let $\LL$ be a Lie algebra over a field $\mathbb F$, and suppose $V$ is an $\LL$-module with $\LL$-action denoted by $x.v$ for $x \in \LL$ and $v \in V$.  A {\it derivation} from $\LL$ to $V$ is a linear map $\phi: \LL \rightarrow V$  such that 
$$\phi([x,y])=x. \phi(y)-y. \phi(x)$$
for all $x,y \in \LL$.  A derivation $\phi : \LL \to V$ is said to be \emph{inner} if there exists $v \in V$ such that $\theta (x) = x.v$ for all $x \in \LL$.  In the case that $V = \LL$, then $\phi : \LL \to \LL$ is inner if there exists $y \in \LL$ such that $\phi (x) = [x,y]$.   We let $\der_\F ( \LL , V)$ denote the space of all derivations from $\LL$ to $V$.  The Lie algebra of derivations from $\LL$ to $\LL$ and its ideal of inner derivations  are denoted $\der_\F (\LL)$ and $\inn_\F (\LL)$,  respectively.

Suppose $\LL$ and $V$ are graded by a group $A$.  A derivation $\phi \in \der_\F (\LL, V)$ has {\it degree $g$}, where $g \in A$, if $\phi (\LL_h) \subseteq V_{g+h}$ for all $h \in A$.  Let $\der_\F (\LL, V)_g$ denote the subspace of degree $g$ derivations from $\LL$ to $V$. Define $\inn_\F (\LL, V)_g=\der_\F (\LL, V)_g \cap \inn (\LL, V)$.

The following result is comparable to Proposition 1.2 in \cite{Farn}, where we replace the assumption that $\LL$ is finitely generated with the assumption that $\LL^{\pm}$ are completely self-centralizing. (Portions of the proof here closely follow the arguments in \cite{Farn}.) \cite{DZ} also present a similarly general result but use {\it locally inner} derivations to describe $\der_\F (\LL)$.

\begin{prop} \label{prop:DerDirectSum}
Let $\LL = \bigoplus_{\alpha \in G} \LL_\alpha$ be a Lie algebra admitting a regular weakly triangular decomposition.  Assume that $\LL^+=\bigoplus_{\alpha \in G_+} \LL_{\alpha}$ is completely self-centralizing.  Then
$$\der_\F (\LL) = \der_\F (\LL)_0 + \inn_\F (\LL).$$
\end{prop}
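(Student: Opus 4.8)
The plan is to show that an arbitrary derivation $\phi \in \der_\F(\LL)$ can be adjusted by an inner derivation so that the result has degree $0$. Since $\LL$ is $G$-graded, I would first decompose $\phi$ into homogeneous components $\phi = \sum_{g \in G} \phi_g$, where $\phi_g \in \der_\F(\LL)_g$. (One must check this sum makes sense as a derivation on each homogeneous piece; since each $\LL_\alpha$ is finite-dimensional by regularity, applying $\phi$ to a homogeneous element yields a finite sum, and collecting by degree gives well-defined $\phi_g$ satisfying the derivation identity degree-by-degree.) The goal then reduces to proving that each homogeneous component $\phi_g$ with $g \neq 0$ is inner, so that subtracting $\sum_{g \neq 0} \phi_g$ — once we know it is a (possibly infinite but well-defined) inner derivation — leaves the degree-$0$ part.

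The heart of the argument is to show that a nonzero-degree homogeneous derivation is inner. Fix $g \neq 0$ and let $\psi = \phi_g$. The strategy is to produce a single element $w \in \LL_g$ such that $\psi(x) = [x, w]$ for all $x$. I would locate $w$ by evaluating $\psi$ on the completely self-centralizing subalgebra: pick $\alpha \in G_+$ with $\LL_\alpha \neq 0$ and a nonzero $x_\alpha \in \LL_\alpha$, and examine $\psi(x_\alpha) \in \LL_{g + \alpha}$. The completely self-centralizing hypothesis on $\LL^\pm$ (via Lemma \ref{lem:genNonzeroBrack-n+}) is exactly what forces the value of $\psi$ on one weight vector to determine a candidate $w$: because distinct weight vectors have nonzero brackets, the map $x \mapsto [x, w]$ is rigid enough to be pinned down by its behaviour on $\LL^+$ (and dually $\LL^-$ via the anti-involution $\sigma$). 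Concretely, I expect to solve $\psi(x_\alpha) = [x_\alpha, w]$ for $w$ using self-centralization to guarantee both existence and uniqueness of $w$ in the relevant weight space, then verify $\psi(y) = [y,w]$ on all of $\LL$ by a degree-by-degree comparison, using the derivation rule $\psi([y,z]) = [y,\psi(z)] + [\psi(y),z]$ together with the fact that $\LL$ is generated by its weight spaces.

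The main obstacle I anticipate is twofold. First, the reassembly step: after showing each $\phi_g$ ($g \neq 0$) equals $\ad(w_g)$ for some $w_g \in \LL_g$, I must argue that $\sum_{g \neq 0} \ad(w_g)$ is a genuine inner derivation $\ad(w)$ for a single $w \in \LL$ — but $w = \sum_{g} w_g$ need not lie in $\LL$ if infinitely many $w_g$ are nonzero, since $\LL = \bigoplus_\alpha \LL_\alpha$ permits only finite sums. The resolution is to show that only finitely many $w_g$ can be nonzero: a derivation evaluated at a fixed homogeneous element lands in a single weight space, and a careful bookkeeping argument (following the arguments of \cite{Farn}) using the interaction of $\psi$ with a well-chosen generating element — together with the ordering condition (4) of the weakly triangular decomposition to control which degrees $g$ can support a nonzero $\phi_g$ — should bound the support. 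Second, establishing that the candidate $w_g$ recovered from $\LL^+$ also computes $\psi$ correctly on $\h$ and on $\LL^-$ requires the anti-involution $\sigma$ and the self-centralizing property of $\LL^-$ (Proposition \ref{prop:posIffNegative}); this symmetry argument is where I expect the bulk of the verification to reside, and it is the step most likely to require the full strength of the weakly triangular structure rather than just finite-dimensionality of weight spaces.
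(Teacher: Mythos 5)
Your overall skeleton (split $\phi$ into homogeneous components $\phi_g$, prove each $\phi_g$ with $g\neq 0$ is inner, then control how many can be nonzero) matches the paper, and your decomposition step is fine. But the core of your argument for the middle step has a genuine gap. You propose to find $w\in\LL_g$ by solving $\psi(x_\alpha)=[x_\alpha,w]$ for a single weight vector $x_\alpha\in\LL_\alpha$, claiming that complete self-centralization guarantees ``both existence and uniqueness'' of $w$. It guarantees only uniqueness: self-centralization makes $\ad_{x_\alpha}\colon \LL_g\to\LL_{g+\alpha}$ injective (its kernel can only be $\F x_\alpha$), but says nothing about surjectivity, and surjectivity genuinely fails. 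In the insertion-elimination algebra the dimensions $\dim\g_n$ (the number of rooted trees on $n$ vertices) are strictly increasing for $n\ge 2$, so for $x\in\g_1$ the map $\ad_x\colon\g_n\to\g_{n+1}$ is never onto; there is no a priori reason $\psi(x_\alpha)$ lies in the image. The paper extracts the required element not from $\LL^+$ but from $\h$: the restriction $\phi_\alpha|_\h$ is a $1$-cocycle of the abelian, finite-dimensional $\h$ with values in the finite-dimensional module $\LL_\alpha$, on which $\h$ acts by the \emph{nonzero} weight $\alpha$, so $H^1(\h,\LL_\alpha)=0$ (Lemma 3 of \cite{Barnes}; here one can also solve the cocycle identity $\alpha(h)\phi_\alpha(h')=\alpha(h')\phi_\alpha(h)$ directly). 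This produces $y_\alpha\in\LL_\alpha$ with $\phi_\alpha(h)=[y_\alpha,h]$ on $\h$; then $\phi_\alpha'=\phi_\alpha-[y_\alpha,\cdot]$ kills $\h$ and is therefore an $\h$-module map $\LL_\beta\to\LL_{\alpha+\beta}$, which must vanish because the two weights differ ($\alpha\neq 0$). Note that this step uses neither self-centralization, nor the anti-involution $\sigma$, nor any generation statement; your plan to ``verify $\psi(y)=[y,w]$ on all of $\LL$'' using $\sigma$ and generation by weight spaces replaces a one-line weight comparison with machinery that is either unavailable or vacuous in this generality.

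Your second anticipated obstacle (finiteness of the support of $\{\phi_g\}$) is indeed the right thing to worry about, but the tool you reach for is the wrong one. Bookkeeping ``following the arguments of \cite{Farn}'' with ``a well-chosen generating element'' presupposes a finite (or at least controlled) generating set, and that is precisely what is missing: the paper proves in Proposition \ref{prop:IE-NotFinGen} that $\g$ is not finitely generated, and the general $\LL$ of the proposition carries no generation hypothesis; the whole point of the self-centralizing hypothesis is to substitute for finite generation. The paper's finiteness argument is where complete self-centralization actually enters: writing $\phi_\alpha=\ad_{X_\alpha}$ with $X_\alpha\in\LL_\alpha$ and fixing $0\neq y\in\LL_\beta$ with $\beta\in G_+$, every bracket $[X_\alpha,y]$ with $X_\alpha\neq 0$, $\alpha\in G_+$, $\alpha\neq\beta$, is nonzero, and these lie in pairwise distinct weight spaces $\LL_{\alpha+\beta}$; since $\phi(y)$ must be a finite sum, all but finitely many $X_\alpha$ with $\alpha\in G_+$ vanish, and the same argument in $\LL^-$ (completely self-centralizing by Proposition \ref{prop:posIffNegative}) handles $G_-$. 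So the proposal identifies the right difficulties but resolves neither: existence of the inner element cannot come from self-centralization, and finiteness cannot come from a generating set.
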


\begin{proof}
In this proof, we view $\LL$ as graded over the group $\h^*$, where $\LL_{\alpha}=0$ for $\alpha \not\in G = G_- \cup \{ 0 \} \cup G_+$.

Let $\phi \in \der_\F (\LL)$. For $\alpha \in \h^*$, let $\proj_\alpha: \LL \rightarrow \LL_\alpha$ denote the canonical projection.  Define $\phi_\alpha$ by 
\begin{equation} \label{eqn:def-phi_alpha}
\phi_\alpha(x)=\sum_{\beta \in \h^*} \proj_{\alpha+\beta} \circ \phi \circ \proj_\beta (x)
\end{equation}
for $x \in \LL$.   Note that this sum contains only finitely many nonzero terms and so is well-defined.

We first show that $\phi =\prod_{\alpha \in \h^*} \phi_\alpha$.   It is straightforward to verify that $\phi_\alpha \in \der_\F (\LL)$. Let $x \in \LL$.  Then,
\begin{align*}
\phi(x) 
&= \sum_{\beta \in \h^*} (\sum_{\alpha \in \h^*} \proj_{\alpha+\beta} \circ \phi \circ \proj_\beta (x) ) \\
&=  \sum_{\alpha \in \h^*}  (\sum_{\beta \in \h^*} \proj_{\alpha+\beta} \circ \phi \circ \proj_\beta (x) ) \\
&= \sum_{\alpha \in \h^*} \phi_\alpha (x).
\end{align*}
Note that the order of the summations may be switched for a fixed $x$ since there are only finitely many non-zero summands in each sum. 

Next we show that $\phi_\alpha \in  \inn_\F (\LL)$ for every $\phi \in \der_\F (\LL)$ and $\alpha \neq 0$.  Note that $\phi_\alpha |_{\h}$ is a derivation from $\h$ to $\LL_\alpha$.   Since $\dim \h, \dim \LL_\alpha < \infty$, Lemma 3 of \cite{Barnes} implies that $H^1(\h, \LL_\alpha) =0$; therefore, $\phi_\alpha |_{\h}$  must be an inner derivation.  Let $y_{\alpha} \in \LL_\alpha$ be such that $\phi_\alpha(h) = [y_\alpha, h]$ for $h \in \h$. Define a new (degree $\alpha$) derivation $\phi_\alpha': \LL \rightarrow \LL$ by $\phi_\alpha'(h)=\phi_\alpha(h) -[y_\alpha, h]$. Since $\phi_\alpha' \mid_{\h}=0$ and $\phi_\alpha' : \LL \to \LL$ is a derivation, we see that, for $h \in \h$ and $x \in \LL_\beta$ (with $\beta \in \h^*$), $\phi_\alpha'([h, x]) = [ \phi_\alpha'(h), x] + [ h, \phi_\alpha'(x)] = [h, \phi_\alpha'(x)]$; it follows that $\phi_\alpha' : \LL_\beta \to \LL_{\alpha + \beta}$ is an $\h$-homomorphism.  However, a straightforward weight space argument shows that $\Hom_{\h} (\LL_\beta, \LL_{\alpha+\beta})=0$ if $\beta \neq 0$. Therefore, for $\beta \neq 0$, we get that $\phi_\alpha' \mid_{\LL_\beta} =0$. As we now have that $\phi_\alpha' = 0$ on all of $\LL$, it follows that $\phi_\alpha$ is inner.

For $\phi \in \der_\F (\LL)$, we now know that $\phi = \sum_{\alpha \in \h^*} \phi_\alpha$, where $\phi_\alpha \in \der_\F (\LL)_\alpha = \inn_\F (\LL)_\alpha$ whenever $\alpha \neq 0$.  Thus $\der_\F (\LL) = \der_\F (\LL)_0 + \prod_{\alpha \in \h^*} \inn_\F (\LL)_\alpha$.   To complete the proof, it is enough to show that $\phi_\alpha = 0$ for all but finitely many $\alpha \in \h^*$. Since $\phi_\alpha \in \inn_\F (\LL)_\alpha$ whenever $\alpha \neq 0$, we may write $\phi_\alpha=\ad_{X_\alpha}$, where $X_\alpha \in \LL_\alpha$ and $0 \neq \alpha \in G = G_- \cup \{ 0 \} \cup G_+ \subseteq \h^*$.   We show that $X_\alpha = 0$ (and thus $\phi_\alpha = 0$) for all but finitely many $\alpha \in G$.

For $\beta \in G_+$ and $0 \neq y \in \LL_\beta$,
$$\phi(y) = \sum_{\alpha \in \h^*} \phi_\alpha(y) = \phi_0(y) + \sum_{0 \neq \alpha \in G} [X_\alpha, y].$$
Since $\LL^+$ is completely self-centralizing, we know that $[X_\alpha, y]$ is a nonzero element of $\LL_{\alpha + \beta}$ whenever $X_\alpha \neq 0$, $\beta \neq \alpha$, and $\alpha \in G_+$.  Therefore, for $\phi$ to be well-defined, the sum above must contain only finitely many terms and thus $X_\alpha = 0$ for all but finitely many $\alpha$ in $G_+$ (and similarly $G_-$).  
\end{proof}

\begin{lem}\label{lem:deg0DerInsElm}
Let $\g$ be the insertion-elimination Lie algebra, and let $\delta \in \der_\cc (\g)$ have degree 0.  Suppose that $\delta (d) = 0$ and $\psset{levelsep=0.3cm, treesep=0.3cm}
\delta (D_{\pstree{\Tr{$\bullet$}}{}}^{\pm}) = 0$.  Then $\delta = 0 \in \der_\cc (\g)$.  
\end{lem}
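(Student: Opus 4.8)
The plan is to show that $\delta(D_t^{\pm}) = 0$ for every $t \in \T$ by induction on $|t|$, following the structure of the proof of Lemma \ref{lem:autoFix_d-Size1}, with the degree-$0$ derivation $\delta$ together with the Leibniz rule playing the role that the automorphism $\omega$ played there. Since $\delta$ has degree $0$, it preserves each graded piece, so $\delta(\g_n) \subseteq \g_n$; hence $\delta(D_t^+) \in {\rm span}_\cc \{ D_u^+ \mid |u| = |t| \}$ and $\delta(D_t^-) \in {\rm span}_\cc \{ D_u^- \mid |u| = |t| \}$. The base case $|t| = 1$ is exactly the hypothesis $\delta(D_\bullet^{\pm}) = 0$, where $\bullet$ denotes the one-vertex rooted tree. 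Fix $n > 1$ and assume $\delta(D_r^{\pm}) = 0$ for all $r$ with $|r| < n$.

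First I would treat $D_t^+$ with $|t| = n$ and $\rootdeg(t) = 1$, writing $\delta(D_t^+) = \sum_{|u|=n} c_u D_u^+$. As in Lemma \ref{lem:autoFix_d-Size1}, write $t = \bullet \cup_{\rt(\bullet)} \tilde t$ with $|\tilde t| = n-1$, so that $[D_{\tilde t}^-, D_t^+] = D_\bullet^+$ and $[D_{\tilde t}^-, D_u^+] = 0$ for every $u \neq t$ with $|u| = n$. Applying $\delta$ to $[D_{\tilde t}^-, D_t^+] = D_\bullet^+$ and using $\delta(D_{\tilde t}^-) = 0$ (induction) and $\delta(D_\bullet^+) = 0$ (hypothesis) gives $0 = [D_{\tilde t}^-, \delta(D_t^+)] = c_t D_\bullet^+$, so $c_t = 0$. (Where Lemma \ref{lem:autoFix_d-Size1} produced the coefficient $1$, here we produce $0$, and this is the only structural change.) To kill the remaining coefficients I would reuse the maximal-component argument: suppose some $c_u \neq 0$ and choose $q$ with $c_q \neq 0$ and $\comps(q)$ maximal; let $q_0$ be a largest component of $q$ and write $q = q' \cup_v q_0$ with $v = \rt(q') = \rt(q)$. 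Since $|q_0| = \comps(q) \le n-1$, induction gives $\delta(D_{q_0}^-) = 0$, and $X := [D_{q_0}^-, D_t^+] \in {\rm span}_\cc \{ D_r^+ \mid |r| < n \}$, so $\delta(X) = 0$. Applying $\delta$ to $X = [D_{q_0}^-, D_t^+]$ then yields $0 = [D_{q_0}^-, \delta(D_t^+)] = \sum_u c_u [D_{q_0}^-, D_u^+]$, which is exactly the identity analyzed in Lemma \ref{lem:autoFix_d-Size1}; the same reasoning (terms with $\comps(u) > |q_0|$ vanish by maximality, terms with $\comps(u) < |q_0|$ vanish since $[D_{q_0}^-, D_u^+] = 0$, and among $\comps(u) = |q_0|$ only $u = q$ contributes a multiple of $D_{q'}^+$) forces $c_q = 0$, a contradiction. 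Hence $\delta(D_t^+) = 0$ whenever $|t| = n$ and $\rootdeg(t) = 1$.

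Next I would extend to arbitrary $D_u^+$ of size $n$. By Theorem \ref{thm:generators}, every element of $\g_n$ is a Lie-algebra expression in the elements $D_r^+$ with $\rootdeg(r) = 1$ and $|r| \le n$; all of these are annihilated by $\delta$ (those with $|r| < n$ by induction, those with $|r| = n$ by the previous step), so the Leibniz rule gives $\delta(D_u^+) = 0$ for every $u$ with $|u| = n$. It then remains to treat $D_t^-$ with $|t| = n$. Writing $\delta(D_t^-) = \sum_{|u|=n} b_u D_u^-$ and using $[D_t^-, D_s^+] = \delta_{t,s}\, d$ for $|s| = n$, I apply $\delta$: since $\delta(d) = 0$ and $\delta(D_s^+) = 0$, this gives $0 = [\delta(D_t^-), D_s^+] = b_s\, d$ for every $s$ with $|s| = n$, so $b_s = 0$ for all such $s$ and $\delta(D_t^-) = 0$. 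This completes the induction, and since $\delta(d) = 0$ as well, we conclude $\delta = 0$.

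The main obstacle is the maximal-component step in the second paragraph, but it is combinatorially identical to the corresponding step of Lemma \ref{lem:autoFix_d-Size1} and can be cited rather than redone. The only genuinely new point is a bookkeeping observation: replacing the automorphism $\omega$, which \emph{fixes} lower-degree generators, by the degree-$0$ derivation $\delta$, which \emph{annihilates} them, converts every relation of the form ``$\omega(\text{lower-degree element}) = (\text{same element})$'' into ``$\delta(\text{lower-degree element}) = 0$,'' so that each use of the Leibniz rule produces a homogeneous linear constraint on the unknown coefficients $c_u$ and $b_u$ of exactly the same shape as before, yielding $0$ in place of the normalizing constant $1$.
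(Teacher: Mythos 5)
Your proof is correct, and its combinatorial engine --- the maximal-component argument applied to the support of $\delta(D_t^+)$ --- is the same one the paper uses; but you take a longer route than the paper's own proof of this lemma. The paper runs that argument directly on an \emph{arbitrary} $t \in \T_n$: writing $\delta(D_t^+) = \sum_{s \in \T_n} c_s D_s^+$, choosing $u$ in the support with $\comps(u)$ maximal and $u_0$ a largest component of $u$, the identity $0 = \delta([D_{u_0}^-, D_t^+]) = [D_{u_0}^-, \delta(D_t^+)]$ (valid by induction, since $[D_{u_0}^-, D_t^+]$ lies in degrees $< n$) forces the coefficient of $D_{\tilde u}^+$, namely $c_u k_u \neq 0$, to vanish --- a contradiction --- with no hypothesis on $\rootdeg(t)$. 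Consequently your preliminary reduction to $\rootdeg(t)=1$, the computation of $c_t$ via $[D_{\tilde t}^-, D_t^+] = D_{\bullet}^+$, and the appeal to Theorem \ref{thm:generators} plus the Leibniz rule are all unnecessary: they are artifacts of Lemma \ref{lem:autoFix_d-Size1}, where they are genuinely needed because applying the automorphism $\omega$ to $X = [D_{q_0}^-, D_t^+]$ reproduces $X$ on the left-hand side rather than $0$, so one must first pin down the coefficient $c_t = 1$ (which requires the root-degree-one structure) before the remaining coefficients can be cancelled, and only afterwards extend to all trees via the generating set. For a derivation the left-hand side is $0$, so the entire support dies at once. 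Your detour is harmless --- Theorem \ref{thm:generators} is proved independently of this lemma, so there is no circularity --- but it adds length and a dependency the paper's argument avoids. On the other hand, your treatment of the negative part via the pairing $[D_t^-, D_s^+] = \delta_{t,s}\, d$ is cleaner and more explicit than the paper's ``similar argument'' remark (it is the device used at the end of Lemma \ref{lem:autoFix_d-Size1}), and is a perfectly good way to finish.
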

\begin{proof}
We prove by induction on $|t|$ that $\delta (D_t^{\pm}) = 0$ for all $t \in \T$.  The base case is part of the stated assumptions.  Fix $n>1$ and $t \in \T_n$, and write 
$$\delta (D_t^+) = \sum_{s \in \T_n} c_s D_s^+.$$

Let $\mathcal S = \{ s \in \T_n \mid c_s \neq 0 \}$, and suppose $\mathcal S \neq \emptyset$.  Choose $u \in \mathcal S$ with $\comps (u)$ (that is, the maximal size among all components of $u$) maximal among elements of $\mathcal S$.  Let $u_0$ be a component of $u$ with $|u_0|=\comps (u)$; then $|u_0| < |u| = n$ and $\displaystyle{[D_{u_0}^-, D_t^+] \in \bigoplus_{0 \le i < n} \g_i}$.  It then follows from induction that $\delta ([D_{u_0}^-, D_t^+]) = 0$ and $\delta (D_{u_0}^-) = 0$. Thus
$$0 = \delta ([D_{u_0}^-, D_t^+]) = [ \delta (D_{u_0}^-) , D_t^+] + [D_{u_0}^-, \delta (D_t^+)] =  [D_{u_0}^-, \delta (D_t^+)].$$
From our choice of $u \in \mathcal S$, we can rewrite this as 
$$0 = [D_{u_0}^-, \sum_{s \in \T_n} c_s D_s^+ ] = \sum_{s \in \T_n \mid \  \comps (s) \le |u_0|} c_s  [D_{u_0}^-, D_s^+ ].$$
In the above sum, since $\comps (s) \le |u_0|$, it follows that $[D_{u_0}^-, D_s^+] = k_s D_{\tilde s}^+$, where $k_s$ is the number of components of $s$ (possibly 0) isomorphic to $u_0$ and $\tilde s$ is the unique (up to isomorphism) rooted tree formed from $s$ by removing any component of $s$ isomorphic to $u_0$.  In particular, we have $[D_{u_0}^-, D_u^+]$ is a nonzero multiple of $D_{\tilde u}^+$.  Moreover, if for some $s \in \T_n$ with $\comps (s) \le |u_0|$, we have $\tilde s  = \tilde u$, then $s  = u$.  This leads to the contradiction that the coefficient of the term $D_{\tilde u}^+$ in the above sum is $c_u k_u \neq 0$.  Therefore the assumption that $\mathcal S \neq \emptyset$ must be incorrect.  

It follows that $\delta (D_t^-) = 0$ by a similar argument.
\end{proof}

\begin{cor}\label{cor:DerModInIE}
Let $\g$ be the insertion-elimination algebra.  Then  $\der_\cc (\g) = \inn_\cc  (\g)$, and thus $H^1(\g, \g) \cong \der_\cc (\g) / \inn_\cc ( \g )$ is trivial.
\end{cor}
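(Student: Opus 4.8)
The plan is to combine Proposition \ref{prop:DerDirectSum} with Lemma \ref{lem:deg0DerInsElm}, reducing the whole statement to the claim that every degree-$0$ derivation of $\g$ is inner. Since $\g$ admits a regular weakly triangular decomposition (Proposition \ref{prop:antiAutoSymm}) and the subalgebras $\g^{\pm}$ are completely self-centralizing (Proposition \ref{prop:nonzeroBrack-n+}), Proposition \ref{prop:DerDirectSum} applies and yields $\der_\cc(\g) = \der_\cc(\g)_0 + \inn_\cc(\g)$. Thus it suffices to prove the inclusion $\der_\cc(\g)_0 \subseteq \inn_\cc(\g)$: the reverse containment $\inn_\cc(\g) \subseteq \der_\cc(\g)$ is automatic, and the two together force $\der_\cc(\g) = \inn_\cc(\g)$, whence $H^1(\g,\g) \cong \der_\cc(\g)/\inn_\cc(\g)$ is trivial.

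To establish $\der_\cc(\g)_0 \subseteq \inn_\cc(\g)$, I would fix a degree-$0$ derivation $\delta$ and first show $\delta(d)=0$. Applying $\delta$ to $[d,x]=nx$ for homogeneous $x \in \g_n$ and using that $\delta$ preserves each $\g_n$, the term $[d,\delta(x)]=n\delta(x)$ cancels, leaving $[\delta(d),x]=0$ for all homogeneous $x$; since $\delta(d)\in\g_0=\cc d$, writing $\delta(d)=\lambda d$ and testing against any $x\in\g_n$ with $n\neq 0$ forces $\lambda=0$. Next, because the one-vertex tree $\bullet$ is the unique tree of size $1$, we have $\g_1=\cc D_\bullet^+$ and $\g_{-1}=\cc D_\bullet^-$, so degree-$0$-ness gives $\delta(D_\bullet^+)=a\,D_\bullet^+$ and $\delta(D_\bullet^-)=b\,D_\bullet^-$ for scalars $a,b\in\cc$. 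Applying $\delta$ to the relation $[D_\bullet^-,D_\bullet^+]=d$ and using $\delta(d)=0$ then yields $a+b=0$.

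The decisive step is to correct $\delta$ by an inner derivation built from $d$. For $\mu\in\cc$, the inner derivation $\ad_{\mu d}\colon x\mapsto[x,\mu d]$ acts on $\g_n$ as multiplication by $-\mu n$, so taking $\mu=-a$ makes $\ad_{\mu d}$ agree with $\delta$ on both $D_\bullet^+$ and $D_\bullet^-$ (here the relation $b=-a$ is exactly what makes a single scalar choice work for both), while fixing $d$. Hence $\delta':=\delta-\ad_{\mu d}$ is again a degree-$0$ derivation with $\delta'(d)=0$ and $\delta'(D_\bullet^+)=\delta'(D_\bullet^-)=0$. Lemma \ref{lem:deg0DerInsElm} then forces $\delta'=0$, so $\delta=\ad_{\mu d}$ is inner, completing the inclusion and the proof.

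I expect no serious obstacle: the substantive content is already packaged in Proposition \ref{prop:DerDirectSum} (reducing to the degree-$0$ case) and Lemma \ref{lem:deg0DerInsElm} (rigidifying degree-$0$ derivations once they vanish on $d$ and on $D_\bullet^{\pm}$), so the corollary is essentially an assembly of these. The only point requiring care is the simultaneous correction on $D_\bullet^+$ and $D_\bullet^-$ by one inner derivation, which hinges on the scalar identity $a+b=0$; verifying that identity and the action of $\ad_{\mu d}$ on graded pieces is the one place to be precise.
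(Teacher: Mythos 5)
Your proposal is correct and follows essentially the same route as the paper's own proof: reduce to degree-$0$ derivations via Proposition \ref{prop:DerDirectSum}, show such a derivation kills $d$ and acts by opposite scalars on $D_\bullet^{\pm}$, subtract the appropriate multiple of the inner derivation coming from $d$, and invoke Lemma \ref{lem:deg0DerInsElm}. The only differences are cosmetic (you verify $\delta(d)=0$ by a general graded argument rather than by applying $\delta$ to $[d,D_\bullet^+]=D_\bullet^+$, and you use the opposite sign convention for $\ad$), so the two arguments coincide.
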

\begin{proof}
By Proposition \ref{prop:DerDirectSum}, it suffices to show that $\der_\cc (\g)_0 \subseteq \inn_\cc (\g)$.   Let $\theta \in \der_\cc (\g)_0$.  Then we have $\theta (d) = cd$ and $\psset{levelsep=0.3cm, treesep=0.3cm} \theta (D_{\pstree{\Tr{$\bullet$}}{}}^+) = k D_{\pstree{\Tr{$\bullet$}}{}}^+$ for $c, k \in \cc$.  By applying $\theta$ to the relation $\psset{levelsep=0.3cm, treesep=0.3cm} [d, D_{\pstree{\Tr{$\bullet$}}{}}^+] = D_{\pstree{\Tr{$\bullet$}}{}}^+$, it is easy to see that $c = 0$, so we now have $\theta (d) = 0$ and $\psset{levelsep=0.3cm, treesep=0.3cm} \theta (D_{\pstree{\Tr{$\bullet$}}{}}^+) = k D_{\pstree{\Tr{$\bullet$}}{}}^+$.  It is also straightforward to use the relation $\psset{levelsep=0.3cm, treesep=0.3cm} [ D_{\pstree{\Tr{$\bullet$}}{}}^-, D_{\pstree{\Tr{$\bullet$}}{}}^+]$ to show that $\psset{levelsep=0.3cm, treesep=0.3cm} \theta (D_{\pstree{\Tr{$\bullet$}}{}}^-) = -k D_{\pstree{\Tr{$\bullet$}}{}}^-$.  From this, we see that the derivation $\delta := \theta - \ad_{kd}$ is $0$ by Lemma \ref{lem:deg0DerInsElm}, so $\theta = \ad_{kd} \in \inn_\cc (\g)$.
\end{proof}

\begin{exam}
Let $M \subseteq \F$ be an additive subgroup of $\F$ with a additive total order and $V(M)$ the generalized Virasoro algebra for $M$, described in Section \ref{subsec:genVir}.  Since $V(M)^{\pm}$ are completely self-centralizing, Proposition \ref{prop:DerDirectSum} applies.  Thus to determine $\der_\F (V(M))$, it suffices to determine $\der_\F  (V(M))_0$.  From the defining relations of $V(M)$, it is straightforward to show that $\der_\F  (V(M))_0$ can be identified with $\Hom(M, \F)$.  Namely, for $\theta: M \rightarrow \F$, there is a degree 0 derivation $\delta_\theta$ given by 
$$\delta_\theta (e_\alpha ) = \theta ( \alpha) e_\alpha \ \ ( \alpha \in M), \qquad \delta_\theta (z) = 0.$$
Moreover, $\delta_\theta \in \inn_\F (V(M))$ if and only if $\theta \in \F  \, {\rm id}_M$.  

These results are presented for a more general class of (generalized) Virasoro algebras in Proposition 3.3 and Theorem 3.4 of \cite{DZ} using the idea of locally inner derivations (rather than completely self-centralizing subalgebras).
\end{exam}

\section{Generating the insertion-elimination algebra}

In this section, we first construct a generating set for the insertion-elimination algebra; this result is used in the proof of Lemma \ref{lem:autoFix_d-Size1}.  The infinite generating set that we construct is not minimal; however, we do show that this Lie algebra is not finitely generated.

\subsection{A generating set for the insertion-elimination algebra.}

For a fixed $n$, define a partial order $\prec$ on $\T_n$ as follows: for $s, t \in \mathbb T_n$, $s \prec t$ if
\begin{enumerate}
\item $\depth (s) > \depth (t)$; or 
\item $\depth (s) = \depth (t)$ and $\rootdeg (s) < \rootdeg (t)$.
\end{enumerate}

\begin{lem} \label{lem:GeneratingSet}
Let $t \in \T_n$ such that $\rootdeg (t)>1$.  Then $D_t^+$ can be written as a linear combination of 
\begin{itemize}
\item a (single) commutator $[D_s^+, D_{s'}^+]$, for some $s, s' \in \T$ with $|s|,|s'|<n$; 
\item elements $D_u^+$, where $u \in \T_n$ with $u \prec t$. 
\end{itemize}
\end{lem}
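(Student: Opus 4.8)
The plan is to realize $D_t^+$ as the unique $\prec$-maximal term of a single commutator of positive generators attached to strictly smaller trees. Since $\rootdeg(t)=k\ge 2$, write the components of $t$ as $t_1,\dots,t_k$ and choose $t_0$ to be a component of \emph{minimal} depth, i.e. $\depth(t_0)=\min_i\depth(t_i)$. Let $w\subseteq t$ be the rooted tree obtained by deleting $t_0$, so that $t=w\cup_{\rt(w)}t_0$, $\rootdeg(w)=k-1$, and $1\le|t_0|,|w|<n$. The decisive numerical consequence of choosing $t_0$ of minimal depth is that deleting $t_0$ cannot lower the depth, so $\depth(w)=\depth(t)$, whereas $\depth(t_0)<\depth(t)$ (because $\depth(t)=\max_i(1+\depth(t_i))\ge 1+\depth(t_0)$).

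Next I would expand the bracket $[D_{t_0}^+,D_w^+]$ using the defining relations, rewritten as in the proof of Lemma \ref{lem:nonzeroBrackWtSp}:
\[
[D_{t_0}^+,D_w^+]=\sum_{v\in V(w)}D^+_{w\cup_v t_0}-\sum_{v\in V(t_0)}D^+_{t_0\cup_v w}.
\]
The summand $v=\rt(w)$ of the first sum is exactly $D_t^+$. I would then verify that $D_t^+$ occurs with coefficient precisely $1$: within the first sum only $v=\rt(w)$ can yield $t$, since for $v\neq\rt(w)$ the tree $w\cup_v t_0$ has root degree $\rootdeg(w)=k-1\neq k=\rootdeg(t)$ and root degree is an isomorphism invariant of rooted trees; and no term of the second sum equals $t$, as those trees have depth strictly larger than $\depth(t)$ (shown below). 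Hence the net coefficient of $D_t^+$ is $1-0=1$.

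The crux is to show that every \emph{other} tree $u$ occurring above satisfies $u\prec t$. For a first-sum term $w\cup_v t_0$ with $v\neq\rt(w)$, the root degree is $k-1<k$, and since $\depth(w)=\depth(t)$ we have $\depth(w\cup_v t_0)\ge\depth(w)=\depth(t)$; thus either the depth strictly increases (so $u\prec t$) or it equals $\depth(t)$ and the strictly smaller root degree forces $u\prec t$. For a second-sum term $t_0\cup_v w$, attaching $w$ at any $v\in V(t_0)$ produces a path of length $(\text{depth of }v\text{ in }t_0)+1+\depth(w)\ge 1+\depth(w)=1+\depth(t)>\depth(t)$, so $\depth(t_0\cup_v w)>\depth(t)$ and again $u\prec t$. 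Rearranging the displayed identity gives
\[
D_t^+=[D_{t_0}^+,D_w^+]-\sum_{u\prec t}c_u D_u^+,
\]
which is the asserted form, with $|t_0|,|w|<n$.

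The main obstacle is the bookkeeping in the depth estimates, and everything hinges on selecting $t_0$ as a minimal-depth component: this is exactly what guarantees $\depth(w)=\depth(t)$ (so that first-sum terms never \emph{decrease} the depth) and what forces every second-sum term to strictly deepen the tree. With a less careful choice of component, the second sum could produce terms of smaller depth—hence $\succ t$ rather than $\prec t$—and the reduction to $\prec$-smaller trees would break down.
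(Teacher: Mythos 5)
Your proof is correct and follows essentially the same route as the paper's: both remove a minimal-depth component $t_0$ (the paper's $t_1$ after sorting), expand the commutator $[D_{t_0}^+, D_w^+]$, and use the depth/root-degree comparison to show every other tree appearing is $\prec t$. Your explicit verification that $D_t^+$ occurs with coefficient exactly $1$ is a detail the paper leaves implicit, but the underlying argument is identical.
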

\begin{proof}
Let $t_1, \ldots, t_k$ be the components of $t$ (so that $t= \bigcup_i t_i$) and assume (without loss of generality) that $\depth (t_1) \leq \cdots \leq \depth (t_k)$. Define $s'=\bigcup_{i=2}^k t_i$, and note that $\depth (s')= \depth (t)$ and $\rootdeg(s')<\rootdeg(t)$.

Then,
$$[D_{t_1}^+, D_{s'}^+] = D_t^+ + \sum_{t \neq u \in \T_n} (\beta(u, t_1, s')-\beta(u, s', t_1)) D_u^+,$$
We then have the following:
\begin{itemize}
\item If $\beta(u, t_1, s') \neq 0$, the tree $u$ has the form $u=s' \cup_v t_1$ for some $v \in V(s')$ such that $v \neq \rt (s')$. Then, $\depth (u) \geq \depth (s')= \depth (t)$ and $\rootdeg(u)=\rootdeg (s')<\rootdeg (t)$.  Thus $u \prec t$.
\item If $\beta(u, s', t_1) \neq 0$, the tree $u$ has the form $u= t_1 \cup_v s'$ for some $v \in V(t_1)$.   Then $\depth (u)> \depth (s')= \depth (t)$ and so $u \prec t$. 
\end{itemize}
Therefore, the statement of the lemma follows with $s=t_1$.
\end{proof}

This lemma allows us to prove Theorem \ref{thm:generators} below, which is used to prove Lemma \ref{lem:autoFix_d-Size1}.

\begin{thm}\label{thm:generators}
The set $\mathbb B= \{ D_t^+ \mid t \in \mathbb T,  \rootdeg(t)=1 \}$ is a generating set for $\n^+$.
\end{thm}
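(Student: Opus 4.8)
The plan is to show that every $D_t^+$ lies in the subalgebra generated by $\mathbb B$, proceeding by induction on $|t| = n$ and, within each fixed $n$, by induction on the partial order $\prec$ restricted to $\T_n$. The base case $n=1$ is immediate, since the unique tree of size $1$ has root degree $1$ and thus $D_\bullet^+ \in \mathbb B$. For the inductive step, fix $n > 1$ and assume that $D_r^+$ lies in the generated subalgebra for every $r$ with $|r| < n$, and also for every $u \in \T_n$ with $u \prec t$.

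First I would dispose of the case $\rootdeg(t) = 1$: here $D_t^+ \in \mathbb B$ by definition, so there is nothing to prove. The substance is the case $\rootdeg(t) > 1$, which is exactly where Lemma \ref{lem:GeneratingSet} applies. That lemma writes
$$
D_t^+ = [D_s^+, D_{s'}^+] - \sum_{t \neq u \in \T_n,\ u \prec t} \bigl( \beta(u, t_1, s') - \beta(u, s', t_1) \bigr) D_u^+,
$$
where $|s|, |s'| < n$ and every tree $u$ appearing in the sum satisfies $u \prec t$. Each term on the right-hand side now lies in the generated subalgebra: the commutator $[D_s^+, D_{s'}^+]$ does, because $D_s^+$ and $D_{s'}^+$ are in the subalgebra by the induction hypothesis on size (as $|s|, |s'| < n$), and a subalgebra is closed under brackets; and each $D_u^+$ with $u \prec t$ is in the subalgebra by the induction hypothesis on the order $\prec$. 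Hence $D_t^+$, being a linear combination of elements of the subalgebra, also lies in it.

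To make the double induction rigorous, I would note that $\prec$ is a partial order on the \emph{finite} set $\T_n$, so it admits no infinite descending chains; the inner induction is therefore well-founded, and it is legitimate to assume the result for all $u \prec t$ while proving it for $t$. Combining the two layers, every $D_t^+$ with $t \in \T$ is expressible using only elements of $\mathbb B$ together with repeated bracketing, which is precisely the statement that $\mathbb B$ generates $\n^+$.

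The main obstacle is already handled by Lemma \ref{lem:GeneratingSet}: the real content is that the commutator trick lowers complexity in a way compatible with a well-founded order, and that every auxiliary tree produced is \emph{strictly} smaller in the order $\prec$ (or strictly smaller in size). The only point requiring care is verifying that the two inductions interlock correctly — that the commutator term drops the size while the correction terms merely drop the $\prec$-rank at fixed size — so that no circularity arises. Since Lemma \ref{lem:GeneratingSet} guarantees exactly this separation, the argument closes cleanly.
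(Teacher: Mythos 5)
Your proposal is correct and takes essentially the same approach as the paper: both arguments rest entirely on Lemma \ref{lem:GeneratingSet}, with an outer induction on $|t|$ and an inner descent along $\prec$ within $\T_n$. The only difference is cosmetic — you formalize the inner step as well-founded induction on the finite poset $(\T_n, \prec)$, whereas the paper repeatedly applies the lemma and observes that the process terminates after finitely many steps because $\depth$ and $\rootdeg$ take only finitely many values.
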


\begin{proof}
We must show that for any $t \in \mathbb T$, $D_t^+$ can be written as a linear combination of elements of $\mathbb B$ and (potentially nested) commutators of elements of $\mathbb B$. We prove this by inducting on $|t|$, noting that the case $|t|=1$ is trivially true.

Therefore, let $n>1$ and suppose $t \in \mathbb T_n$ with $\rootdeg(t)>1$.  Lemma \ref{lem:GeneratingSet} implies that $D_t^+$ is a linear combination of a commutator $[D_s^+, D_{s'}^+]$ ($|s|,|s'|<n$) and elements $D_{t'}^+$ where $|t'|=n$ but $t' \prec t$. Using the inductive hypothesis, the commutator $[D_s^+, D_{s'}^+]$ has the appropriate form.  Therefore, we only need to consider $D_{t'}^+$, where $t' \prec t$.  If $\depth (t') = n-1$, then $\rootdeg(t')=1$, and we are done.  Otherwise, we may again apply Lemma  \ref{lem:GeneratingSet} to $D_{t'}^+$.  Since $0 \le \depth (t') \leq n-1$ and  $1 \le \rootdeg (t') \le n-1$ for all $t' \in \mathbb T_n$, this process only needs to be repeated a finite number of times.
\end{proof}

We note that the set in Theorem \ref{thm:generators} is not minimal.  In particular, the proper subset 
$$\psset{levelsep=0.3cm, treesep=0.3cm}
\mathbb B \setminus \{ D^+_{\pstree{\Tr{$\bullet$}}{\pstree{\Tr{$\bullet$}}{ \Tr{$\bullet$} \Tr{$\bullet$} \Tr{$\bullet$} }}} \}$$
generates $\n^+$, which is a consequence of the following calculation:
$$
 \psset{levelsep=0.3cm, treesep=0.3cm}
-2 [D^+_{\pstree{\Tr{$\bullet$}}{}}, D^+_{\pstree{\Tr{$\bullet$}}{\pstree{\Tr{$\bullet$}}{ \Tr{$\bullet$}  \Tr{$\bullet$} }}}]  
+ [D^+_{\pstree{\Tr{$\bullet$}}{}}, D^+_{\pstree{\Tr{$\bullet$}}{\pstree{\Tr{$\bullet$}}{ \Tr{$\bullet$} }  \Tr{$\bullet$} }}] 
- [D^+_{\pstree{\Tr{$\bullet$}}{\Tr{$\bullet$}}}, D^+_{\pstree{\Tr{$\bullet$}}{\pstree{\Tr{$\bullet$}}{ \Tr{$\bullet$} }}}] 
- [D^+_{\pstree{\Tr{$\bullet$}}{\Tr{$\bullet$}}}, D^+_{\pstree{\Tr{$\bullet$}}{ \Tr{$\bullet$} \Tr{$\bullet$} }}] 
$$
$$\psset{levelsep=0.3cm, treesep=0.3cm}
= 3 D^+_{\pstree{\Tr{$\bullet$}}{\pstree{\Tr{$\bullet$}}{ \pstree{\Tr{$\bullet$}}{ \Tr{$\bullet$} \Tr{$\bullet$} }}}} 
- 6 D^+_{\pstree{\Tr{$\bullet$}}{\pstree{\Tr{$\bullet$}}{ \pstree{\Tr{$\bullet$}}{\Tr{$\bullet$}}  \Tr{$\bullet$} }}} 
- 2 D^+_{\pstree{\Tr{$\bullet$}}{\pstree{\Tr{$\bullet$}}{ \Tr{$\bullet$} \Tr{$\bullet$} \Tr{$\bullet$} }}}.
$$

\subsection{The insertion-elimination algebra is not finitely generated.}
We show in this section that the insertion-elimination algebra $\g = \g^- \oplus \h \oplus \g^+$ is not finitely generated as a Lie algebra.  A key step is to relate the problem of generating $\g$ to the problem of generating $\g^+$ from a subset of $\g^+$.

Lemma \ref{lem:reorderBrackets}, Lemma \ref{lem:prodReducePos}, and Lemma \ref{lem:finGenPosPart} below can be proved in the more general setting of a $\z$-graded Lie algebra.  However, for simplicity we state the results in terms of the insertion-elimination Lie algebra.

For $0 < M \in \z$, we let 
$$\g^+_{[M]} = \bigoplus_{0 \le i \le M} \g_i, \quad \g^-_{[M]} = \bigoplus_{-M \le i \le 0} \g_i, \quad \mbox{and} \quad \g_{[M]} = \bigoplus_{-M \le i \le M} \g_i.$$
If $S \subseteq \g$ is any subset, let $\langle S \rangle$ denote the Lie subalgebra of $\g$ generated by $S$.  In particular, $\langle \g^+_{[M]} \rangle$ denotes the Lie subalgebra of $\g$ generated by $\g^+_{[M]}$.

\begin{lem}\label{lem:reorderBrackets}
Let $S$ be a subset of the insertion-elimination Lie algebra $\g$.  Then every element of the Lie subalgebra $\langle S \rangle$ of $\g$ generated by $S$ can be written as a sum of elements of the form 
$$( \ad_{x_1} \circ \ad_{x_2} \circ \cdots \circ \ad_{x_{k-1}})(x_k)=[x_1, [x_2, \ldots [ x_{k-1}, x_k ] \ldots ]],$$
for $x_1, \ldots, x_k \in S$.
\end{lem}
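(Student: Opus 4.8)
The plan is to identify $\langle S\rangle$ explicitly as the span of the standard iterated brackets appearing in the statement, and thereby reduce the whole lemma to a single closure computation. Write $W$ for the $\cc$-linear span of all elements of the form $(\ad_{x_1}\circ\cdots\circ\ad_{x_{k-1}})(x_k)$ with $k\ge 1$ and $x_1,\dots,x_k\in S$ (the case $k=1$ being the elements of $S$ themselves); call these the \emph{standard brackets}. Then $S\subseteq W$, and since every such iterated bracket manifestly lies in any Lie subalgebra containing $S$, we also have $W\subseteq\langle S\rangle$. Hence it suffices to prove that $W$ is itself a subalgebra, i.e. that it is closed under the bracket: for then $W$ is a subalgebra containing $S$, forcing $\langle S\rangle\subseteq W$, and combined with $W\subseteq\langle S\rangle$ this yields $\langle S\rangle=W$, which is exactly the assertion.

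By bilinearity, closure of $W$ under the bracket reduces to the following claim: for any two standard brackets $u$ and $v$ built from elements of $S$, the bracket $[u,v]$ is a linear combination of standard brackets. I would prove this by induction on the length $p$ of the left factor $u$ (the number of generators it involves). When $p=1$, we have $u=x_1\in S$, and $[x_1,v]=\ad_{x_1}(v)$ is obtained by prepending one generator to the standard bracket $v$, hence is again standard. When $p\ge 2$, write $u=[x_1,u']$ with $u'$ a standard bracket of length $p-1$, and apply the Jacobi identity in the form
$$[u,v]=[[x_1,u'],v]=[x_1,[u',v]]-[u',[x_1,v]].$$
In the first summand, $[u',v]$ is a linear combination of standard brackets by the inductive hypothesis (its left factor $u'$ has length $p-1<p$), and prepending $x_1$ keeps each resulting term standard. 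In the second summand, $v':=[x_1,v]$ is standard (again a prepend), and $[u',v']$ is a linear combination of standard brackets by the inductive hypothesis applied to the left factor $u'$ of length $p-1$. Thus $[u,v]\in W$, completing the induction and hence the proof.

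I should stress that the argument uses only antisymmetry and the Jacobi identity, so nothing specific to $\g$ enters; this matches the remark preceding the lemma that the result holds for an arbitrary $\z$-graded Lie algebra. The one point requiring genuine care is the choice of induction parameter, and this is where I expect the main obstacle to lie. A naive induction on the \emph{total} number of generators fails, because the term $[u',[x_1,v]]$ produced by the Jacobi identity has the same total length as $[u,v]$; the resolution is that this term has a strictly shorter left factor, so inducting on the length of the \emph{left} argument (rather than on total length) is what makes the recursion well-founded. Beyond this bookkeeping, the proof is purely formal.
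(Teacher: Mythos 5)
Your proof is correct, but it takes a different route from the paper. The paper disposes of this lemma in one line: it invokes the universal property of free Lie algebras (the subalgebra $\langle S\rangle$ is the image of the free Lie algebra on $S$ under the canonical homomorphism) together with Lemma 3.3 of Chibrikov's paper \cite{Ch}, which says that the free Lie algebra is spanned by right-normed brackets. You instead prove the spanning statement from scratch: you define $W$ as the span of the right-normed ("standard") brackets and show it is closed under the bracket by the Jacobi identity, inducting on the length of the \emph{left} factor. Your induction is well-founded and the Jacobi manipulation $[[x_1,u'],v]=[x_1,[u',v]]-[u',[x_1,v]]$ is applied correctly; your observation that induction on total length would stall (since $[u',[x_1,v]]$ has the same total length as $[u,v]$) identifies exactly the subtlety, and choosing the left length as the induction parameter resolves it. What each approach buys: the paper's citation is shorter and places the lemma in the standard literature on bases of free Lie algebras (of which Chibrikov's right-normed basis result is a refinement --- it gives a basis, not merely a spanning set); your argument is self-contained, uses nothing beyond antisymmetry and Jacobi, and makes transparent that the statement holds in any Lie algebra over any field --- in fact more generally than the $\z$-graded setting the paper mentions. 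Either proof is acceptable; yours is essentially the elementary argument underlying the cited fact.
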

\begin{proof}
This follows from the corresponding statement about free Lie algebras.  Thus the result is a consequence of Lemma 3.3 of \cite{Ch}.
\end{proof}

\begin{lem}\label{lem:prodReducePos}
Let $x_1, \ldots, x_k \in \g$ (the insertion-elimination Lie algebra) and $n_1, \ldots, n_k \in \z$ such that $x_i \in \g_{n_i}$, and let $M = \max \{ |n_1|, \ldots, |n_k| \}$.  If $n_1 + \cdots + n_k \ge 0$, then $[x_1, [x_2, \ldots [ x_{k-1}, x_k ] \ldots ]] \in \langle \g^+_{[M]} \rangle$.  If $n_1 + \cdots + n_k \le 0$, then $[x_1, [x_2, \ldots [ x_{k-1}, x_k ] \ldots ]] \in \langle \g^-_{[M]} \rangle$.
\end{lem}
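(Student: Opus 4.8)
The plan is to prove the first assertion (the case $n_1 + \cdots + n_k \ge 0$) by induction on $k$; the second assertion then follows by a symmetric argument, or by applying the anti-involution $\sigma$ of Proposition~\ref{prop:antiAutoSymm}, which sends $\g_i$ to $\g_{-i}$ and hence interchanges $\g^+_{[M]}$ and $\g^-_{[M]}$ (and so $\langle \g^+_{[M]} \rangle$ and $\langle \g^-_{[M]} \rangle$). Throughout, the factors $x_i$ are homogeneous with $|n_i| \le M$, and I will only ever replace the given bracket by a finite sum of left-normed brackets whose factors remain homogeneous of absolute degree at most $M$ and whose total degree is unchanged (hence still $\ge 0$); since the bracket is $\z$-homogeneous and the Jacobi identity preserves total degree, this bookkeeping is automatic. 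For the base case $k = 1$ we have $x_1 \in \g_{n_1}$ with $0 \le n_1 \le M$, so $x_1 \in \g^+_{[M]} \subseteq \langle \g^+_{[M]} \rangle$.

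For the inductive step, I first dispose of degree-$0$ factors. If some $x_j \in \g_0 = \cc d$, then because $\ad_d$ acts on a homogeneous element as multiplication by its degree, the sub-bracket $[x_j, [x_{j+1}, \ldots, x_k]]$ collapses to a scalar multiple of $[x_{j+1}, \ldots, x_k]$ (and similarly when $j = k$), so the whole bracket becomes a scalar multiple of a left-normed bracket in $k-1$ factors with the same total degree, and the inductive hypothesis applies. Thus I may assume every $n_i \neq 0$. If every $n_i > 0$, then each $x_i \in \g^+_{[M]}$ and the bracket lies in $\langle \g^+_{[M]} \rangle$ since the latter is a subalgebra; and not every $n_i$ can be negative, as $\sum n_i \ge 0$. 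So I may assume the degree sequence contains both signs.

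The key move is to isolate the innermost maximal block of equal sign. Let $\ell \ge 1$ be largest such that $n_{k-\ell+1}, \ldots, n_k$ all share the same sign, and set $i = k - \ell$; then $x_i$ has the sign opposite to every factor in the tail $x_{i+1}, \ldots, x_k$. The crucial arithmetic observation is that whenever two homogeneous factors have opposite signs, their bracket is homogeneous of degree $n_a + n_b$ with $|n_a + n_b| \le \max(|n_a|, |n_b|) \le M$, so combining an opposite-sign pair never leaves the range $[-M, M]$. I then push $x_i$ inward through the tail using the Jacobi identity in the form $[x_i, [x_{i+1}, z]] = [[x_i, x_{i+1}], z] + [x_{i+1}, [x_i, z]]$, applied repeatedly. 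At each step the first summand combines $x_i$ with a tail factor into a single homogeneous element of absolute degree $\le M$, yielding a left-normed bracket in $k-1$ factors; the second summand moves $x_i$ one position deeper, keeping $k$ factors and leaving $x_i$ still opposite in sign to every factor it has yet to meet. Iterating until $x_i$ reaches the innermost position expresses the sub-bracket $[x_i, [x_{i+1}, \ldots, x_k]]$—and hence, after re-attaching the outer factors $x_1, \ldots, x_{i-1}$, the original bracket—as a finite sum of left-normed brackets in $k-1$ factors, each with homogeneous factors of absolute degree $\le M$ and total degree $\sum n_i \ge 0$.

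By the inductive hypothesis each such $(k-1)$-factor bracket lies in $\langle \g^+_{[M]} \rangle$, and therefore so does the original bracket, completing the induction. The main obstacle is exactly the bounded-degree constraint: one cannot simply reduce $k$ by combining arbitrary adjacent factors, since combining two factors of the \emph{same} sign can produce a degree exceeding $M$ and break the induction. Pushing a single opposite-sign element through a uniform-sign tail circumvents this, because every bracket formed along the way is between factors of opposite sign and so stays within $\g_{[M]}$. The points requiring care are verifying that the Jacobi rewriting preserves both the left-normed form and the total degree, and that the chosen tail is genuinely of uniform sign, so that all intermediate combinations are indeed of opposite sign.
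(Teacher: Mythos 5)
Your proof is correct and takes essentially the same route as the paper's: induction on $k$, isolating the boundary between an opposite-signed element and the maximal uniform-sign tail, and reducing the number of factors by iterated Jacobi identity (which is exactly the derivation expansion of $\ad_{x_\ell}$ that the paper invokes in one step), with the same key observation that bracketing opposite-sign homogeneous elements keeps all degrees in $[-M,M]$. The minor differences --- pre-collapsing degree-zero factors via $\ad_d$, and dispatching the $n_1+\cdots+n_k\le 0$ case through the anti-involution $\sigma$ rather than a symmetric argument --- are cosmetic.
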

\begin{proof}
The proof is by induction on $k$.   If $n_1 + \cdots + n_k = 0$, then we have $[x_1, [x_2, \ldots [ x_{k-1}, x_k ] \ldots ]] \in \g_0 \subseteq \g^+_{[M]} \cap \g^-_{[M]}$, so the result is obvious.  Thus we assume $n_1 + \cdots + n_k \neq 0$.  

First suppose $n_1 + \cdots + n_k > 0$.  If $n_i \ge 0$ for all $i$, then the result is immediate, so we assume some $n_i$ is negative.  Since the sum $n_1 + \cdots + n_k$ is positive, $\{ n_1, \ldots, n_k \}$ contains both a positive and a negative element.

Suppose for now that $n_k \le 0$.   As $\{ n_1, \ldots, n_k \}$ contains both a positive and a negative element, we may let $\ell$ be maximal such that $n_\ell > 0$ (noting that $\ell < k$).  Observe that  
\begin{align*}
[x_\ell, [ x_{\ell + 1}, \ldots [ x_{k-1}, x_k ] \ldots ]] &=  \ad_{x_\ell} ( [ x_{\ell + 1}, \ldots [ x_{k-1}, x_k ] \ldots ] ) \\
&= \sum_{\ell < i \le k} [ x_{\ell + 1}, \ldots , [ \ad_{x_\ell} (x_i) , \ldots , [x_{k-1}, x_k ] \ldots ] \ldots ].
\end{align*}
Now $\ad_{x_\ell} (x_i) \in \g_{n_\ell + n_i}$, and by our choice of $\ell$, we know that $-M \le n_{\ell} + n_i \le M$.  Define $x_i' := \ad_{x_\ell}(x_i) \in \g_{n_i'}$, where $n_i' = n_\ell + n_i$ and $-M \le n_i' \le M$.  We now have 
\begin{align*}
[x_\ell, [ x_{\ell + 1}, \ldots [ x_{k-1}, x_k ] \ldots ]] = \sum_{\ell < i \le k} [ x_{\ell + 1}, \ldots , [ x_i' , \ldots , [x_{k-1}, x_k ] \ldots ] \ldots ],
\end{align*}
where the commutator on the left is constructed from $k - \ell + 1$ elements, and the commutators on the right are constructed from $k - \ell$ elements.   Thus, the original commutator $[x_1, [x_2, \ldots [ x_{k-1}, x_k ] \ldots ]]$
can be expressed as a sum of commutators $[y_1, [y_2, \ldots [ y_{k-2}, y_{k-1} ] \ldots ]]$ where 
\begin{enumerate}
\item $y_i \in \g_{m_i}$ for some $m_i \in \z$ with $|m_i| \le M$, and 

\item $m_1 + \cdots + m_{k-1} = n_1 + \cdots + n_k$.
\end{enumerate}
Then $[y_1, [y_2, \ldots [ y_{k-2}, y_{k-1} ] \ldots ]] \in \langle \g^+_{[M]} \rangle$ by induction; and it follows that 
$$[x_1, [x_2, \ldots [ x_{k-1}, x_k ] \ldots ]] \in \langle \g^+_{[M]} \rangle.$$
We can apply similar arguments to the case $n_1 + \cdots + n_k > 0$ where $n_k > 0$; and to the case $n_1 + \cdots + n_k <0$.
\end{proof}

\begin{lem}\label{lem:finGenPosPart}
Let $\g$ be the insertion-elimination Lie algebra.  If $\g$ is finitely generated as a Lie algebra, then $\bigoplus_{n \ge 0} \g_n$ is also finitely generated Lie algebra.
\end{lem}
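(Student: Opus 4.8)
The plan is to show that the hypothesis of finite generation for $\g$ forces $\bigoplus_{n \ge 0}\g_n$ to coincide with $\langle \g^+_{[M]}\rangle$ for some finite $M$, and then to observe that $\g^+_{[M]} = \bigoplus_{0\le i \le M}\g_i$ is finite-dimensional (since each $\g_i$ is), so that $\bigoplus_{n\ge 0}\g_n$ is generated by a finite-dimensional subspace and is therefore finitely generated.

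First I would replace the given finite generating set by a finite \emph{homogeneous} one. If $\g = \langle g_1, \dots, g_r\rangle$, write each $g_i = \sum_n (g_i)_n$ as its (finite) decomposition into homogeneous components $(g_i)_n \in \g_n$. The collection $\tilde S$ of all nonzero components $(g_i)_n$ is finite, each $g_i$ lies in $\langle \tilde S\rangle$, and therefore $\langle \tilde S\rangle = \langle g_1,\dots,g_r\rangle = \g$. Set $M = \max\{|n| : (g_i)_n \neq 0 \text{ for some } i\}$, which is finite since $\tilde S$ is. Thus $\g$ is generated by finitely many homogeneous elements, each lying in some $\g_n$ with $|n|\le M$.

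Next I would combine Lemma \ref{lem:reorderBrackets} and Lemma \ref{lem:prodReducePos}. By Lemma \ref{lem:reorderBrackets}, every element of $\g = \langle \tilde S\rangle$ is a sum of nested commutators $[x_1,[x_2,\dots,[x_{k-1},x_k]\dots]]$ with each $x_j \in \tilde S$, hence $x_j \in \g_{n_j}$ with $|n_j|\le M$. Each such nested commutator is homogeneous of degree $n_1+\cdots+n_k$. Now take any homogeneous $y \in \g_n$ with $n \ge 0$. Expressing $y$ as such a sum and grouping the summands by total degree, only the summands of total degree $n$ can survive (the others lie in different graded pieces and must therefore cancel), so $y$ is a sum of nested commutators whose components have degrees $n_j$ with $|n_j|\le M$ and $\sum_j n_j = n \ge 0$. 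Lemma \ref{lem:prodReducePos} then places each such commutator in $\langle \g^+_{[M]}\rangle$, whence $y \in \langle \g^+_{[M]}\rangle$. This gives $\bigoplus_{n\ge 0}\g_n \subseteq \langle \g^+_{[M]}\rangle$. The reverse inclusion is immediate, since $\g^+_{[M]} \subseteq \bigoplus_{n\ge 0}\g_n$ and $\bigoplus_{n\ge 0}\g_n$ is a subalgebra (the nonnegative part of a $\z$-graded Lie algebra). Hence $\bigoplus_{n\ge 0}\g_n = \langle \g^+_{[M]}\rangle$, and since $\g^+_{[M]}$ is finite-dimensional, this subalgebra is finitely generated.

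The genuinely load-bearing steps have already been isolated as Lemmas \ref{lem:reorderBrackets} and \ref{lem:prodReducePos}; given those, the main point requiring care is the passage from an arbitrary finite generating set to a finite homogeneous one, together with the degree-matching argument that lets me discard every nested commutator whose total degree is not $n$. I expect this bookkeeping, rather than any deep structural fact, to be the only real obstacle.
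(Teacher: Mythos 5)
Your proof is correct and follows essentially the same route as the paper's: both reduce to a finite set of homogeneous generators of degree at most $M$ in absolute value, apply Lemma \ref{lem:reorderBrackets} to express elements as sums of nested commutators, discard summands of the wrong degree using the grading, and invoke Lemma \ref{lem:prodReducePos} to land in $\langle \g^+_{[M]} \rangle$. Your write-up merely makes explicit two steps the paper leaves implicit (the passage to homogeneous components of the generators, and the final observation that $\bigoplus_{n\ge 0}\g_n = \langle \g^+_{[M]}\rangle$ is generated by a finite-dimensional subspace).
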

\begin{proof}
Suppose that there is a finite generating set $\mathcal G$ for $L$.  Then there is some positive integer $M$ such that $\g_{[M]} = \bigoplus_{i= -M}^M \g_i$ contains $\mathcal G$ and therefore generates $\g$.

Now let $x \in \g_n$, $n > 0$.  By assumption, $x \in \langle \g_{[M]} \rangle$.  Then by Lemma \ref{lem:reorderBrackets}, $x$ can be written as a sum of elements of the form $[x_1, [x_2, \ldots [ x_{k-1}, x_k ] \ldots ]]$ for some $x_1, \ldots, x_k \in \g_{[M]}$.  We may further assume the elements $x_i$ are homogeneous with respect to the $\z$-grading on $\g$.   Since the graded subspaces $\g_i \subseteq L$ are linearly independent, and $x \in \g_n$, it is no loss to assume that the summands $[x_1, [x_2, \ldots [ x_{k-1}, x_k ] \ldots ]]$ belong to $\g_n$.   Now by Lemma \ref{lem:prodReducePos}, each $[x_1, [x_2, \ldots [ x_{k-1}, x_k ] \ldots ]]$ belongs to $\langle \g^+_{[M]} \rangle$, so in fact $x \in \langle \g^+_{[M]} \rangle$.  
\end{proof}

\begin{lem}\label{lem:posNotFinGen} 
Let $\g = \g^- \oplus \h \oplus \g^+$ denote the insertion-elimination Lie algebra.  The subalgebra $\g^+$ (or equivalently, the subaglebra $\h \oplus \g^+$) is not finitely generated as a Lie algebra.
\end{lem}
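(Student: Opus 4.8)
The plan is to argue by contradiction using a growth estimate on the graded pieces. Suppose $\g^+$ were finitely generated. Since any finite subset of $\g^+$ is contained in $\g^+_{[M]} = \bigoplus_{1\le i\le M}\g_i$ for $M$ large enough, we would have $\g^+ = \langle \g^+_{[M]}\rangle$. By Lemma \ref{lem:reorderBrackets}, together with multilinearity of the bracket (expanding each entry in the basis $\{D_t^+\}$), the weight space $\g_n$ would then be spanned by the nested brackets
$$[D_{t_1}^+,[D_{t_2}^+,\ldots,[D_{t_{k-1}}^+,D_{t_k}^+]\ldots]], \qquad 1\le |t_i|\le M,\ \textstyle\sum_i|t_i|=n,$$
one for each ordered sequence $(t_1,\ldots,t_k)$ of rooted trees of sizes in $\{1,\ldots,M\}$ whose sizes sum to $n$. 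Writing $T_n=\dim\g_n=|\T_n|$ for the number of rooted trees on $n$ vertices, this yields the key inequality $T_n\le b_n$ for every $n$, where $b_n$ is the number of such ordered sequences.

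I would then pass to generating functions. Set $T(x)=\sum_{n\ge1}T_nx^n$ and $P_M(x)=\sum_{i=1}^M T_ix^i$. Decomposing a sequence by its first entry gives $\sum_{n\ge0}b_nx^n=(1-P_M(x))^{-1}$, whose radius of convergence $\lambda_M$ is the least positive root of $P_M(x)=1$ (here $P_M$ has nonnegative coefficients and is strictly increasing on $[0,\infty)$ with $P_M(0)=0$). On the other hand $T(x)$ is the classical rooted-tree series, with radius of convergence $\rho\approx0.3383$; by Otter's singularity analysis the series converges at its (square-root type) singularity with $T(\rho)=1$. Since the $T_i$ are positive, $P_M(\rho)=\sum_{i=1}^M T_i\rho^i<\sum_{i=1}^\infty T_i\rho^i=T(\rho)=1$, and monotonicity of $P_M$ forces $\lambda_M>\rho$.

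The contradiction is now immediate: the bound $0\le T_n\le b_n$ for all $n$ gives $\limsup_n T_n^{1/n}\le\limsup_n b_n^{1/n}=\lambda_M^{-1}$, so the radius of convergence of $T(x)$ is at least $\lambda_M>\rho$, contradicting the fact that this radius equals $\rho$. Hence $\g^+=\langle\g^+_{[M]}\rangle$ fails for every $M$, and $\g^+$ is not finitely generated; the equivalence with $\h\oplus\g^+$ is routine, since adjoining $d$ only rescales homogeneous components under the bracket. The main obstacle is the sharp comparison $\lambda_M>\rho$: a crude estimate for $b_n$ (say, bounding each factor by $T_M$ and counting compositions) grows far too fast to beat $T_n$, so the argument genuinely needs the exact value $T(\rho)=1$ at the radius of convergence, which is precisely where Otter's analysis of the rooted-tree series enters.
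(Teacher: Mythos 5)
Your proof is correct, but it takes a genuinely different route from the paper's. The paper's proof is an explicit, elementary obstruction: for every $m$ it shows $D^+_{\ell_{m+1}} \notin \langle \g^+_{[m]} \rangle$, where $\ell_{m+1}$ is the ladder with $m+1$ vertices. The point is that the degree-$(m+1)$ component of $\langle \g^+_{[m]} \rangle$ is spanned by brackets $[D_s^+, D_t^+]$ with $|s|, |t| \ge 1$ and $|s|+|t| = m+1$, and the coefficient of the ladder in any such bracket vanishes: if $s$ or $t$ is not a ladder, then no tree of the form $t \cup_v s$ or $s \cup_w t$ is a ladder, while if both are ladders, each of the two sums in the bracket produces the ladder exactly once and the two occurrences cancel. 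Your argument is instead a dimension-growth estimate, and its steps are sound: Lemma \ref{lem:reorderBrackets}, applied to the basis elements $D_t^+$ with $|t| \le M$, does give $T_n \le b_n$; the radius of convergence of $\sum_n b_n x^n$ is the least positive root $\lambda_M$ of $P_M(x)=1$; the comparison $P_M(\rho) < T(\rho) = 1$ forces $\lambda_M > \rho$, contradicting $\limsup_n T_n^{1/n} = \rho^{-1}$; and the passage between $\g^+$ and $\h \oplus \g^+$ is indeed routine, because $\langle \g^+_{[M]} \rangle$ is spanned by homogeneous elements, so $\cc d + \langle \g^+_{[M]} \rangle$ is a subalgebra. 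The trade-off is clear. Your route imports two nontrivial facts of P\'olya--Otter theory --- that the rooted-tree series has finite radius $\rho$, and the sharp critical fact that it converges at $\rho$ with $T(\rho)=1$ --- and, as you note, the rooted-tree series sits exactly at the boundary $T(\rho) \le 1$ where this style of argument can succeed, so the proof is delicate and needs those references stated precisely. In exchange, it is completely blind to the multiplication: it shows that \emph{no} $\z_{>0}$-graded Lie algebra whose $n$-th component has dimension $|\T_n|$ can be finitely generated, whatever its bracket. The paper's ladder computation, by contrast, is self-contained, uses only the combinatorics of the defining relations, and exhibits concrete elements that fail to be generated in each degree.
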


In the proof below, $\ell_n \in \T_n$ is the ladder with $n$ vertices.  (For example, $\ell_3 = \psset{levelsep=0.3cm, treesep=0.3cm} 
\pstree{\Tr{$\bullet$}}{\pstree{\Tr{$\bullet$}}{\Tr{$\bullet$}}}$.)  Note that for $t \in \T$, $\depth (t) = |t|-1$ if and only if $t = \ell_{|t|}$.  

\begin{proof}
It is enough to argue that, for any given $m >0$, $D_{\ell_{m+1}}^+ \not\in \langle \g^+_{[m]} \rangle$.  In particular, we argue that if $s, t \in \T$ with $[D_s^+, D_t^+ ] \in \g_{m+1}$, then the coefficient of $D_{\ell_{m+1}}^+$ in $[D_s^+, D_t^+ ]$ is zero. 

Recall 
$$[D_s^+, D_t^+ ]=\sum_{v \in V(t)} D^+_{t \cup_v s} - \sum_{w \in V(s)} D^+_{s \cup_v t }.$$

If either $s$ or $t$ is not a ladder, then for any $v \in V(t)$ or $v' \in V(s)$, the trees $t \cup_v s$ and $s \cup_{v'} t$ are not ladders.  On the other hand, if $s$ and $t$ are both ladders, it's straightforward to check from the commutator relation that the coefficient of $D_{\ell_{m+1}}^+$ in $[D_s^+, D_t^+]$ is zero.
\end{proof}

From Lemmas \ref{lem:finGenPosPart} and \ref{lem:posNotFinGen}, we have the following result.

\begin{prop}\label{prop:IE-NotFinGen}
The insertion-elimination algebra $\g$ is not finitely generated as a Lie algebra. 
\end{prop}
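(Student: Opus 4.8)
The plan is to argue by contradiction and funnel everything through the two preceding lemmas, which together isolate the obstruction in the positive part $\g^+$. First I would assume, for contradiction, that $\g$ is finitely generated as a Lie algebra. Lemma \ref{lem:finGenPosPart} then immediately yields that the subalgebra $\bigoplus_{n \ge 0} \g_n = \h \oplus \g^+$ is also finitely generated. But Lemma \ref{lem:posNotFinGen} asserts precisely that $\h \oplus \g^+$ (equivalently $\g^+$) is \emph{not} finitely generated. This contradiction forces the conclusion that $\g$ cannot be finitely generated, completing the proof.

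Since the two lemmas may be assumed, the formal proof is a one-line chaining of them; the genuine content sits inside the lemmas themselves. The main obstacle is Lemma \ref{lem:finGenPosPart}, which relies on the degree-reduction mechanism of Lemma \ref{lem:prodReducePos}. The essential subtlety there is that a nested commutator $[x_1, [x_2, \ldots, [x_{k-1}, x_k] \ldots]]$ built from homogeneous elements $x_i \in \g_{n_i}$ with total degree $\sum_i n_i \ge 0$ can be reorganized so as to lie in $\langle \g^+_{[M]} \rangle$ for $M = \max_i |n_i|$: although the \emph{intermediate} brackets may pass through negative degrees, one contracts adjacent factors (using the maximal index $\ell$ with $n_\ell > 0$) so that no generator of absolute degree exceeding $M$ is ever required. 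This is what permits a finite generating set for $\g$, necessarily contained in some $\g_{[M]}$, to be transferred to a finite generating set for $\h \oplus \g^+$.

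Granting that reduction, Lemma \ref{lem:posNotFinGen} supplies the contradiction by a direct combinatorial obstruction: for each bound $m$ one exhibits the ladder tree $\ell_{m+1}$ and checks that the basis element $D^+_{\ell_{m+1}}$ has coefficient zero in every bracket $[D_s^+, D_t^+] \in \g_{m+1}$ with $s, t$ of strictly smaller size, since a nontrivial grafting $t \cup_v s$ or $s \cup_v t$ of two smaller trees (at least one of which is a proper grafting) can never be a ladder. Hence $D^+_{\ell_{m+1}} \notin \langle \g^+_{[m]} \rangle$ for any $m$, so no finite subset of $\g^+$ generates $\g^+$. I expect the verification in Lemma \ref{lem:prodReducePos} to be the most delicate step to get right, but at the level of the present proposition it is entirely encapsulated by the cited lemmas.
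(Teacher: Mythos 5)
Your proposal is correct and is essentially identical to the paper's own proof, which likewise deduces the result directly by combining Lemma \ref{lem:finGenPosPart} with Lemma \ref{lem:posNotFinGen}. Your accompanying sketches of the two lemmas' internal arguments (the degree-reduction via Lemma \ref{lem:prodReducePos} and the ladder-tree obstruction) also match the paper's reasoning.
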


\begin{exam}\label{exam:subalgInsElim}
Proposition \ref{prop:IE-NotFinGen} can be used to show the existence of many infinite-dimensional, finitely generated subalgebras of $\g$ that fit into the framework described in this paper.

In general, consider any subalgebra $\mathfrak a \subseteq \g$ that contains the element $d$ and is invariant under the automorphism $\tau_0$ (i.e. $\tau_0 (\mathfrak a) \subseteq \mathfrak a$) defined in (\ref{eqn:tau0}).  The assumption that $d \in \mathfrak a$ implies that $\mathfrak a = \bigoplus_{n \in \z} \mathfrak a_n$, where $\mathfrak a_n = \mathfrak a \cap \g_n$; and clearly $\dim \mathfrak a_n < \infty$.  As both $\g^+$ and $\g^-$ are completely self-centralizing Lie algebras, both $\mathfrak a^+ = \bigoplus_{n > 0} \mathfrak a_n$ and $\mathfrak a^- = \bigoplus_{n < 0} \mathfrak a_n$ are also completely self-centralizing.  Since $\mathfrak a$ is invariant under $\tau_0$, it is evident that $\mathfrak a_n \neq 0$ if and only if $\mathfrak a_{-n} \neq 0$. Thus $\mathfrak a$ satifsy the assumptions of Section \ref{subsec:genGradedLie}, and it follows that Proposition \ref{prop:finSubsGeneral}, Proposition \ref{prop:autPreserveCartan}, and Proposition \ref{prop:DerDirectSum} can be applied any such subalgebra.

More specifically, suppose $S \subseteq \{ D_t^+ \mid t \in \T \} \subseteq \g^+$, and let $\g_S$ denote the Lie subalgebra of $\g$ generated by $S$ and $d$ and $\tau_0 (S)$.  Then $\g_S$ is clearly invariant under $\tau_0$.  Moreover, if $|S| < \infty$, Proposition \ref{prop:IE-NotFinGen} implies that $\g_S$ is a proper subalgebra of $\g$. 
\qed
\end{exam}

\end{document}